\numberwithin{equation}{section}
\newcommand{\comments}[1]{}
\newcommand{\CC}{\mathbb{C}}
\newcommand{\PP}{\mathbb{P}}
\newcommand{\QQ}{\mathbb{Q}}
\newcommand{\ZZ}{\mathbb{Z}}
\newcommand{\bB}{\mathbf{B}}
\newcommand{\bC}{\mathbf{C}}
\newcommand{\bD}{\mathbf{D}}
\newcommand{\bH}{\mathbf{H}}
\newcommand{\bK}{\mathbf{K}}
\newcommand{\bM}{\mathbf{M}}
\newcommand{\bN}{\mathbf{N}}
\newcommand{\bR}{\mathbf{R}}
\newcommand{\bX}{\mathbf{X}}
\newcommand{\bY}{\mathbf{Y}}
\newcommand{\bZ}{\mathbf{Z}}
\newcommand{\cal}{\mathcal}
\def\cC{{\cal C}}
\def\cL{{\cal L}}
\def\cM{{\cal M}}
\def\cO{{\cal O}}
\def\cQ{{\cal Q}}
\newcommand{\ses}[3]{0\rightarrow{#1}\rightarrow{#2}\rightarrow{#3}\rightarrow 0}
\def\and{\quad{\rm and}\quad}
\def\and{\quad\text{and}\quad}
\def\PP{\mathbb{P}}
\def\CC{\mathbb{C}}
\def\cO{\mathcal{O}}
\def\git{/\!/ }
\def\lr{\rightarrow}
\def\PGL{\mathrm{PGL}}
\def\SL{\mathrm{SL}}
\def\gl{\mathfrak{gl}}
\DeclareMathOperator{\Ext}{Ext} 
\DeclareMathOperator{\cExt}{\mathcal{E}xt}
\DeclareMathOperator{\Hom}{Hom} 
\DeclareMathOperator{\cHom}{\mathcal{H}om}
 \DeclareMathOperator{\Stab}{Stab}
\DeclareMathOperator{\rank}{rank}
\newtheorem{prop}{Proposition}[section]
\newtheorem{theo}[prop]{Theorem}
\newtheorem{lemm}[prop]{Lemma}
\newtheorem{coro}[prop]{Corollary}
\theoremstyle{definition}
\newtheorem{defi}[prop]{Definition}
\newtheorem{rema}[prop]{Remark}
\newtheorem{ques}[prop]{Question}
\title[Moduli of sheaves, FM transforms, and partial desingularization]{Moduli of sheaves, Fourier-Mukai transform, and partial desingularization}
\author{Kiryong Chung}
\address{Department of Mathematics Education, Kyungpook National University, 80 Daehakro, Bukgu, Daegu 41566, Korea}
\email{krchung@knu.ac.kr}
\author{Han-Bom Moon}
\address{Department of Mathematics, Fordham University, Bronx, NY 10458, USA}
\email{hmoon8@fordham.edu}
\keywords{Moduli space, Birational morphism, Fourier-Mukai transform, Partial desingularization}
\subjclass[2010]{14D22, 14F42, 14E15.}
\begin{document}
\begin{abstract}
We study birational maps among 1) the moduli space of semistable sheaves of Hilbert polynomial $4m+2$ on a smooth quadric surface, 2) the moduli space of semistable sheaves of Hilbert polynomial $m^{2}+3m+2$ on $\mathbb{P}^{3}$, 3) Kontsevich's moduli space of genus-zero stable maps of degree 2 to the Grassmannian $Gr(2, 4)$. A regular birational morphism from 1) to 2) is described in terms of Fourier-Mukai transforms. The map from 3) to 2) is Kirwan's partial desingularization. We also investigate several geometric properties of 1) by using the variation of moduli spaces of stable pairs.
\end{abstract}
\maketitle

%%%%%%%%%%%%%%%%%%%%%%%%%%%%%%%%%%%%%

\section{Introduction}\label{sec:intro}

In this paper, $V$ is a fixed complex vector space of dimension 4. Let $\{x, y, z, w\}$ be a basis of $V^{*}$. In $\PP^{3} = \PP(V)$, $Q = Z(xy-zw)$ is a smooth quadric surface. Let $Gr(2, 4)$ be the space of lines in $\PP(V)$.

The aim of this paper is to understand birational maps between several moduli spaces. Here are the three main characters in this article.

\begin{itemize}
\item The space $\bM_{2}$ is the moduli space of semistable sheaves $F$ on $Q$ with Hilbert polynomial $4m+2$ and $c_{1}(\mathrm{Supp(F)}) = (2,2)$.
\item The space $\bR$ is the moduli space of semistable sheaves on $\PP^{3}$ with Hilbert polynomial $m^{2}+3m+2$.
\item The space $\bK$ is Kontsevich's moduli space of genus-zero stable maps of degree 2 to $Gr(2, 4)$.
\end{itemize}

Note that all of these moduli spaces are main ingredients in curve counting theories, in particular, generalized Donaldson-Thomas (or Pandharipande-Thomas) theory and Gromov-Witten theory.
%Note that all of these moduli spaces appear in the curve counting theories, namely, a generalized DT-(or PT-)invariants and GW-invariants.
One of the central problems in modern algebraic geometry is to understand the relation between these curve counting theories. For an excellent survey, see \cite{PT14}.

By using the standard deformation theory on each moduli space, it is not difficult to show that all of those three moduli spaces are 9-dimensional varieties. Although it does not quite seem obvious, they are indeed \emph{birational}.

The fact that they share a common open dense subset can be seen as follows. (We provide details in Section \ref{sec:rationality}.) A general point on $\bM_{2}$ parametrizes a line bundle $\cO_{C}(p+q)$ of degree 2 on a smooth elliptic quartic curve $C$ on $Q$. On $\bR$, a general point parametrizes a pair $(Q', \ell)$ where $Q'$ is a smooth quadric surface in $\PP^{3}$ and $\ell$ is one of two rulings on $Q' \cong \PP^{1} \times \PP^{1}$. (This pair was called a `regulus' in \cite{NT90}.) For a general point $[\cO_{C}(p+q)] \in \bM_{2}$, $C$ is the intersection of two smooth quadrics $Q$ and $Q_{1}$. By imposing the condition that the line $\langle p, q\rangle$ is in $Q_{1}$, we can uniquely determine a quadric and assign a ruling class $\langle p, q\rangle$. Thus we can assign $[(Q_{1}, \langle p, q\rangle)]$ on $\bR$. One can check that this is indeed a bijection on an open dense subset. Furthermore, from $(Q', \ell)$, we can obtain a family over $\PP^{1}$ of lines in $Q' \subset \PP^{3}$ which is in the class of $\ell$. So we have a map $\PP^{1} \to Gr(2, 4)$. This map has degree two, hence it is an element of $\bK$.

\subsection{Main results}

In this paper, we study the birational maps between these spaces. More precisely, we show that the birational morphisms can be described in terms of Fourier-Mukai (shortly, FM) transforms and Kirwan's partial desingularization.

\begin{theo}[\protect{Theorem \ref{mainthm1}}]\label{mainthm1intro}
There exists a birational morphism
\[
	\Psi: \bM_2 \longrightarrow \bR.
\]
The map $\Psi$ is a composition of two FM transforms $\Psi_{1} : D^{b}(Q) \to D^{b}((\PP^{3})^{*})$ and $\Psi_{2} : D^{b}((\PP^{3})^{*}) \to D^{b}(\PP^{3})$. Furthermore, $\Psi$ is a smooth blow up of two points on $\bR$.
\end{theo}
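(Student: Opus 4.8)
The plan is to build $\Psi$ from the two stated transforms and then read off its geometry from the explicit description of the birational map in the introduction. Writing $\Psi_{1}$ and $\Psi_{2}$ as FM functors whose kernels are the (twisted) structure sheaves of the natural incidence correspondences $\{(x,[H]) : x \in H\} \subset Q \times (\PP^{3})^{*}$ and $\{([H],y) : y \in H\} \subset (\PP^{3})^{*} \times \PP^{3}$, the first thing to prove is that for every semistable $F \in \bM_{2}$ the complex $\Psi_{2}\Psi_{1}(F)$ is a single semistable sheaf on $\PP^{3}$ (placed in one cohomological degree) with Hilbert polynomial $m^{2}+3m+2$. I would do this by a $\mathrm{WIT}$-type base-change analysis: compute the relative cohomology of each kernel against $F$, fix the numerical invariants by Grothendieck--Riemann--Roch, and use the purity and stability of $F$ (supported on a curve of class $(2,2)$) to kill the unwanted cohomological degrees. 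Carrying the same computation out over the universal family promotes $\Psi$ to a morphism of moduli spaces, and it is birational because on the common dense open locus of the introduction the transform realizes exactly the bijection $[\cO_{C}(p+q)] \mapsto [(Q_{1},\langle p,q\rangle)]$ between line bundles on smooth elliptic quartics and reguli.

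The geometric core is to locate the non-isomorphism locus. Here I expect the decisive observation to be the following. For $[\cO_{C}(p+q)]$ the associated regulus is swept out by the secant lines $\langle p',q'\rangle$ as $p'+q'$ ranges over the pencil $|\cO_{C}(p+q)| \cong \PP^{1}$; these lines fill a quadric $Q_{1}$ in the pencil of quadrics through $C$. This quadric degenerates to the \emph{fixed} $Q$ precisely when $\cO_{C}(p+q) \cong \cO_{C}(\ell_{i}|_{C})$ for one of the two rulings $\ell_{1},\ell_{2}$ of $Q$, because the restriction $H^{0}(Q,\cO(\ell_{i})) \to H^{0}(C,\ell_{i}|_{C})$ is an isomorphism of two-dimensional spaces (its kernel is $H^{0}(\cO_{Q}(-1,-2)) = 0$), so the entire pencil of secants consists of $\ell_{i}$-ruling lines of $Q$. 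Since lines in a fixed ruling are linearly equivalent on $Q$, the isomorphism class $\cO_{C}(\ell_{i}|_{C})$ depends only on $C$, and therefore
\[
	E_{i} = \{[\cO_{C}(\ell_{i}|_{C})] : C \in |\cO_{Q}(2,2)|\} \cong \PP^{8}
\]
is contracted by $\Psi$ to the single regulus $[(Q,\ell_{i})] \in \bR$. These two divisors are disjoint: $\ell_{1}|_{C}$ and $\ell_{2}|_{C}$ differ by $\eta = \cO_{Q}(1,-1)|_{C}$, and twisting the ideal $\cO_{Q}(-2,-2)$ of $C$ gives the acyclic sheaf $\cO_{Q}(-1,-3)$, whence $h^{0}(\eta) = 0$ and $\eta \not\cong \cO_{C}$ for every such $C$.

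To conclude that $\Psi$ is the blow-up of $\bR$ at the two points $p_{i} = [(Q,\ell_{i})]$ I would first check smoothness of both spaces: $\bM_{2}$ is smooth because $\Ext^{2}(F,F) \cong \Hom(F, F \otimes K_{Q})^{\vee} = 0$ for stable $F$, the canonical bundle $K_{Q} = \cO_{Q}(-2,-2)$ being anti-ample; and $\bR$ is smooth at $p_{i}$ by the analogous deformation computation for $\cO_{Q}(1,0)$ on $\PP^{3}$. I would then show $\Psi$ is an isomorphism over $\bR \setminus \{p_{1},p_{2}\}$ and invoke the standard contraction criterion: a projective birational morphism of smooth $9$-folds whose exceptional locus is a disjoint union of copies of $\PP^{8}$, each mapping to a smooth point and carrying normal bundle $\cO_{\PP^{8}}(-1)$, is the blow-up of those points. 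The last input, $N_{E_{i}/\bM_{2}} \cong \cO_{\PP^{8}}(-1)$, I would extract from the deformation theory of $\cO_{C}(\ell_{i}|_{C})$: the fibre normal direction is the $\mathrm{Pic}^{0}(C) = H^{1}(\cO_{C})$ of line-bundle deformations off the ruling class, and twisting this one-dimensional datum over $|\cO_{Q}(2,2)|$ produces the required degree $-1$.

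The step I expect to be the genuine obstacle is proving that $E_{1} \cup E_{2}$ is the \emph{entire} exceptional locus. This requires controlling $\Psi$ over all boundary strata of $\bM_{2}$ — sheaves supported on singular, reducible or non-reduced curves of class $(2,2)$, and sheaves that are not line bundles on their support — and verifying simultaneously that the FM transform of each such sheaf remains a single semistable sheaf mapping to a regulus with $Q_{1} \neq Q$. Pinning down the normal bundle to be exactly $\cO_{\PP^{8}}(-1)$, rather than merely establishing a contraction, is the other delicate point.
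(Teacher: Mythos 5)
Your geometric picture is right: the exceptional locus consists exactly of the two disjoint $\PP^{8}$'s of sheaves $\cO_{C}(1,0)$ and $\cO_{C}(0,1)$ (the paper's $D_{1,0}$ and $D_{0,1}$), each contracted to the regulus supported on $Q$ itself, i.e.\ to $[\cO_{Q}(1,0)]$ and $[\cO_{Q}(0,1)]$, and your disjointness check via $h^{0}(\cO_{C}(1,-1))=0$ is correct. Where you diverge from the paper is in method. For the transform itself the paper does not run a WIT/base-change argument on arbitrary boundary sheaves: it first classifies the free resolutions of \emph{every} $F\in\bM_{2}$ into exactly three types (Proposition \ref{resol}, proved by the stable-pair wall-crossing analysis of Section \ref{sec:geomofM2}), pushes each resolution through the two kernels term by term, and lands in $\bR$ by the converse statement ``resolution $\Rightarrow$ semistable'' (Lemma \ref{freeofr}). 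For the blow-up structure the paper does not compute $N_{D_{1,0}/\bM_{2}}$ at all: it computes $T_{[\cO_{Q}(1,0)]}\bR\cong\Ext^{1}_{\PP^{3}}(\cO_{Q}(1,0),\cO_{Q}(1,0))\cong H^{0}(\cO_{Q}(2,2))$ and observes that $D_{1,0}=\PP\Hom(\cO_{Q}(-1,-2),\cO_{Q}(1,0))$ \emph{is} $\PP(H^{0}(\cO_{Q}(2,2)))=\PP(T_{[\cO_{Q}(1,0)]}\bR)$, which identifies the fibre over the blown-up point directly. Birationality is obtained by writing down explicit inverse FM transforms rather than by matching open loci.

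The two points you flag as obstacles are genuine gaps in your write-up, and the single missing idea that closes both is precisely the resolution classification. Without it, your ``WIT-type analysis using purity and stability'' has to be carried out separately on every boundary stratum (non-reduced supports, reducible quartics, strictly semistable sheaves $\cO_{C_{1}}\oplus\cO_{C_{2}}$), and there is no uniform mechanism to see that the transform stays a sheaf and stays semistable; with it, everything reduces to three finite computations. Likewise your normal-bundle argument is incomplete as stated: $N_{E_{1}/\bM_{2}}$ is a line bundle on $\PP^{8}$, but identifying it with $R^{1}\pi_{*}\cO_{\cC}\cong\cO_{\PP^{8}}(-1)$ only over the locus of smooth curves does not determine it, since every line in $|\cO_{Q}(2,2)|$ meets the discriminant; you would need the identification in codimension one, including at singular and non-reduced members, where the splitting of deformations into ``move the support'' plus ``move the line bundle'' is no longer automatic. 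The paper's route through $\PP(T_{[\cO_{Q}(1,0)]}\bR)$ sidesteps this entirely. One smaller imprecision: $\bM_{2}$ is not smooth (it is singular along the strictly semistable locus $\PP^{3}\times\PP^{3}/\ZZ_{2}$), so your contraction criterion for ``smooth $9$-folds'' can only be applied after restricting to neighborhoods of $D_{1,0}\cup D_{0,1}\subset\bM_{2}^{s}$ and of the two stable points of $\bR$, and the global statement still requires knowing $\Psi$ is an isomorphism over the singular locus, which is what the explicit inverse transforms provide.
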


By the work of Le Potier in \cite{LP93a}, $\bR$ can be constructed as an elementary ($\PGL_{2} \times \PGL_{2}$)-GIT quotient of a projective space $\PP(\CC^{2}\otimes V, \CC^{2})$. This GIT quotient has strictly semistable points, and $\bR$ has some bad singularities. Surprisingly, its partial resolution of singularities has a very different moduli theoretic interpretation.

\begin{theo}[\protect{Theorem \ref{thm:pardesing}}]\label{thm:pardesingintro}
There is a birational morphism
\[
	\overline{\pi} : \bK \to \bR,
\]
which is Kirwan's partial desingularization of $\bR$. It is a composition of two blow ups along the singular loci of $\bR$.
\end{theo}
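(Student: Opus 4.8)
The plan is to combine Le Potier's GIT description of $\bR$ with Kirwan's general desingularization machinery, and then to match the outcome with the geometry of stable maps. First I would recall from \cite{LP93a} that $\bR$ is the GIT quotient $\PP(\CC^{2}\otimes V, \CC^{2})\git(\PGL_{2}\times\PGL_{2})$, where a point is a $2\times2$ matrix $M$ of linear forms on $\PP^{3}$ and the associated sheaf is $\coker\bl M\colon \CC^{2}\otimes\cO_{\PP^{3}}(-1)\to\CC^{2}\otimes\cO_{\PP^{3}}\br$. For a generic $M$ the determinant $\det M$ cuts out a smooth quadric $Q'$ and the cokernel is exactly the regulus sheaf $\cO_{Q'}(\ell)$ of the introduction, so this recovers the common open dense subset and the birationality of all three spaces.

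Next I would determine the strictly semistable locus and the stabilizers. A slope computation shows that a stable factor of reduced Hilbert polynomial $m^{2}+3m+2$ must be the structure sheaf $\cO_{H}$ of a plane, so the only polystable strictly semistable sheaves are $\cO_{H_{1}}\oplus\cO_{H_{2}}$ with $H_{1},H_{2}\in(\PP^{3})\sta$. Hence the singular locus of $\bR$ is $\Sigma_{0}\cong\Sym^{2}(\PP^{3})\sta$, inside which the diagonal $\Sigma_{1}\cong(\PP^{3})\sta$ (where $H_{1}=H_{2}$, the sheaf being $\cO_{H}^{\oplus2}$) carries the larger, reductive stabilizer $\PGL_{2}$, while a general point of $\Sigma_{0}$ has stabilizer only $\CC\sta$. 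This is precisely the stratification by stabilizer dimension that drives Kirwan's algorithm.

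I would then run Kirwan's partial desingularization in the prescribed order: blow up the deepest stratum $\Sigma_{1}$ first, then blow up the proper transform of $\Sigma_{0}$, re-taking the quotient at each step. After these two blow ups every semistable point has finite stabilizer, so the resulting space $\widehat{\bR}$ is Kirwan's partial desingularization, realized as a composition of two blow ups along the singular loci of $\bR$, exactly as the statement demands. To produce the morphism, I would send a stable map $f\colon C\to Gr(2,4)$ to the surface swept out by its family of lines in $\PP^{3}$ together with the induced ruling class, which is a point of $\bR$; on maps from a smooth $\PP^{1}$ with irreducible image this is the regulus assignment above, so the resulting $\overline{\pi}\colon\bK\to\bR$ is a birational morphism. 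The content of the theorem is then the identification $\bK\cong\widehat{\bR}$ compatible with the two projections to $\bR$.

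The last and hardest step is to prove that $\overline{\pi}$ is precisely the Kirwan blow down. My approach is to compare the two exceptional structures fiberwise. Over a point $\cO_{H}^{\oplus2}\in\Sigma_{1}$, Kirwan's exceptional fiber is a GIT quotient of the projectivized normal space by $\PGL_{2}$, which I expect to be the $\PP^{5}$ of conics in the $\PP^{2}$ of lines contained in the plane $H$; on the $\bK$ side this matches the locus of degree two stable maps whose image lies in $Gr(2,3)\cong\PP^{2}\subset Gr(2,4)$, and a dimension count ($3+5=8$, a divisor) confirms the bookkeeping. Over the generic part of $\Sigma_{0}$, where the quadric degenerates to $H_{1}\cup H_{2}$, the second exceptional divisor should match the stratum of stable maps with reducible domain, one degree one component ruling each plane and meeting along the common line $H_{1}\cap H_{2}$. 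The main obstacle is this local comparison: one must show that both blow up centers are smooth with the predicted projectivized normal bundles, and that the normal-cone data of Kirwan's centers coincide with the deformation theory of the boundary strata of $\bK$. Granting the generic identification together with the smoothness of $\bK$, the two spaces are smooth and birational over $\bR$ with the same exceptional divisors, and an appeal to the universal property of the Kontsevich space (or a comparison of relative Picard groups over $\bR$) forces $\overline{\pi}$ to be the Kirwan contraction, completing the proof.
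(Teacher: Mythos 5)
Your setup agrees with the paper's: $\bR$ is the Kronecker-module quotient $\PP(V^{*}\otimes\gl_{2})\git(\PGL_{2}\times\PGL_{2})$, the polystable strictly semistable sheaves are $\cO_{H_{1}}\oplus\cO_{H_{2}}$, the stabilizer jumps from $\CC^{*}$ to $\PGL_{2}$ exactly on the diagonal, and Kirwan's procedure consists of two blow ups (first the deepest, $\PGL_{2}$-stabilized stratum, then the proper transform of the $\CC^{*}$-stratum). One correction of substance even at this stage: the blow ups are performed $G$-equivariantly upstairs on $\bX^{ss}$, and after each one an additional $G$-invariant locus (the paper's $\overline{\bZ}_{0}^{1}$ and $\overline{\bZ}_{1}^{2}$) becomes unstable and must be identified and discarded before quotienting; describing the procedure as blow ups of the strata of $\bR$ downstairs hides this step entirely.

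The genuine gap is in how you connect $\bK$ to the partial desingularization $\bX^{2}/G$. You posit a morphism $\overline{\pi}:\bK\to\bR$ (``send a stable map to the swept surface with its ruling'') and then want to force it to coincide with the Kirwan contraction via a universal property or a comparison of Picard groups. But (i) you never verify that this assignment is regular in families over the boundary of $\bK$ (reducible domains, double covers), and (ii) even granting that, a morphism to $\bR$ does not automatically lift to a blow up of $\bR$; checking the universal property of blowing up here is not easier than the original problem, and without such a lift there is no morphism between $\bK$ and $\bX^{2}/G$ to which a Picard-number argument could apply. The paper resolves this by going in the opposite direction: starting from the tautological rational family of degree-two maps $\bX^{0}\times\PP^{1}\dashrightarrow Gr(2,4)\subset\PP^{5}$ given by a bundle map $6\cO\to\cO(2)$, it shows that after each Kirwan blow up the pulled-back map factors through a twist down by the exceptional divisor, and after one further blow up of $\bX^{2}\times\PP^{1}$ along the base locus one obtains a flat family of stable maps, hence a morphism $\Phi^{2}:\bX^{2}\to\bK$ descending to $\overline{\Phi}^{2}:\bX^{2}/G\to\bK$. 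This elementary modification of the universal family is the technical heart of the proof and is absent from your outline. Finally, your closing appeal to ``the smoothness of $\bK$'' is false: $\bK$ has $\ZZ_{2}$-quotient singularities along the locus of double covers (which is exactly why $\mathbf{CC}_{2}\to\bK$ is a nontrivial resolution). What closes the argument is that $\overline{\Phi}^{2}$ is a birational morphism between normal, $\QQ$-factorial projective varieties of equal Picard number, hence an isomorphism. A smaller inaccuracy: the fiber of the full partial desingularization over a point of the deepest stratum is not the $\PP^{5}$ of conics but $\bM_{0,0}(\PP^{2},2)$, the blow up of that $\PP^{5}$ along the Veronese $\PP^{2}$; the $\PP^{5}$ is only the fiber after the first blow up.
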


In summary, we have a common contraction
\[
	\xymatrix{\bM_{2} \ar[rd]_{\Psi} && \bK \ar[ld]^{\overline{\pi}}\\
	& \bR.}
\]
Because these two maps are blow ups along disjoint centers, on the log minimal model program (MMP) of $\bM_{2}$ or $\bK$, the other space does not appear.

\medskip

In the past few years, there has been an explosion of the study of birational geometry of moduli spaces of semistable sheaves from the viewpoint of log MMP. Indeed many of these birational models are moduli spaces of Bridgeland stable objects in the derived category of coherent sheaves (\cite{ABCH13, BM13, BM14, BC13, BMW14, CC13b, CHW14}).

Our work arose from the study of the geometry of $\bM_{2}$ in the viewpoint of the Bridgeland wall-crossing on moduli spaces of torsion sheaves on rational surfaces with Picard number $\ge 2$. During this study, the first author observed that by applying the work of Le Potier in \cite[Section 4]{LP93a}, one may describe the birational morphism between its birational models in terms of FM transforms. In \cite{LP93a}, Le Potier applied a FM transform to obtain a birational map from a moduli space of torsion sheaves on $\PP^{2}$ with Euler characteristic zero to a moduli space of torsion-free sheaves on its dual plane $(\PP^{2})^{*}$. Theorem \ref{mainthm1intro} tells us that a FM transform may provide a birational map between two moduli spaces of sheaves on \emph{different} projective varieties.

\subsection{Desingularization of $\bM_2$}
Along the locus of strictly semistable sheaves, $\bM_2$ has some singularities. Finding a desingularization of a singular moduli space is a problem with a very long history and interesting implications. Finding a resolution of a given moduli space is useful to study the geometry of the singular moduli space. For instance in \cite{Kir86b}, by using her desingularization method, Kirwan computed the rank of intersection cohomology groups of the moduli space of vector bundles on a curve when it is singular. In the case of moduli spaces of vector bundles on a curve, several desingularizations with certain moduli theoretic interpretations are constructed in \cite{NR78, Ses77}. Furthermore, the desingularization can be a highly nontrivial example of a variety with a desired geometric property. In \cite{OG99}, O'Grady constructed a new compact hyperk\"ahler manifold by taking a desingularization of a certain singular moduli space of torsion-free sheaves on a K3 surface.

Two main theorems in this paper combined with \cite[Theorem 1.2]{CHK12} provide an explicit desingularization of $\bM_2$. $\bK$ is singular along the locus $\bD$ of stable maps that are two-to-one maps to their images. By \cite[Theorem 1.2]{CHK12}, the blow up of $\bK$ along $\bD$  is a smooth projective variety $\mathbf{CC}_2 := \mathbf{CC}_{2}(Gr(2,4))$, namely, the space of complete conics. On $\bR$, two blow up centers for $\bM_{2} \to \bR$ and $\bK \to \bR$ are disjoint. Thus the fiber product $\widetilde{\bM}_{2} := \bM_{2} \times_{\bR}\mathbf{CC}_{2}$ is a desingularization of $\bM_{2}$. It would be very interesting to find a moduli theoretic interpretation of $\widetilde{\bM}_{2}$.
\[
	\xymatrix{\widetilde{\bM}_{2} \ar[dd] \ar[r] &
	\mathbf{CC}_{2}\ar[d]\\
	& \bK \ar[d] \\ \bM_{2} \ar[r] & \bR}
\]

\begin{ques}
What is the moduli theoretic meaning of $\widetilde{\bM}_{2}$?
\end{ques}

\subsection{Rationality}

The original motivation of this paper was to show the rationality of $\bM_{2}$, which was asked in \cite[Conjecture 35]{BH14}. The fact that $\bM_{2}$ is birational to $\bK$ gives the rationality of $\bM_{2}$, because the rationality of $\bK$, or more precisely, the space of conics in $Gr(2, 4)$ is well-known, for example in \cite[Section 3.10]{NT90} or in \cite{CC10}. On the diagram below, $\bH = \mathrm{Hilb}^{2m+1}(Gr(2, 4))$ is the Hilbert scheme of conics in $Gr(2, 4)$, and $\bC = \mathrm{Chow}_{1,2}(Gr(2, 4))$ is the normalization of the irreducible component of the Chow variety containing smooth conics. All maps are birational morphisms.
\[
	\xymatrix{\bK \ar[rd] \ar@{<-->}[rr] && \bH \ar[ld] \ar[rd]\\
	& \bC && Gr(3, 6)}
\]

\subsection{Geometry of $\bM_{2}$ and its numerical invariants}

Besides the result described above, by using the moduli space $\bM_{2}^{\alpha}$ of $\alpha$-stable pairs, we computed some topological invariants of $\bM_{2}$. In Section \ref{sec:geomofM2}, we study 1) the variation of the moduli space of $\alpha$-stable pairs from $\alpha = \infty$ to $\alpha = 0^{+}$ and 2) the fiber of the map $\phi : \bM_{2}^{0^{+}} \longrightarrow \bM_{2}$. Indeed, when $\alpha = 2$, there is a single wall-crossing which is a composition of a smooth blow up and a smooth blow down. In Section \ref{sec:geomofM2} we show that there is a diagram
\[
	\xymatrix{\bM_{2}^{\infty} \ar[d]_{\pi} \ar[rd] \ar@{<-->}[rr]
	&& \bM_{2}^{0^{+}} \ar[ld] \ar[d]^{\phi}\\
	|\cO_{Q}(2, 2)| & \bM_{2}^{2} & \bM_{2}}
\]
where $\phi$, $\pi$ are algebraic fiber spaces and the other two maps are birational morphisms. As consequences, we compute 1) the singular locus, 2) the rank of the Picard group, and 3) the virtual Poincar\'e polynomial of $\bM_{2}$. Also we obtain a classification of all free resolution types of objects in $\bM_{2}$, which is crucial in Section \ref{sec:M2andR}.

\subsection{Organization of the paper}

This paper is organized in the following way. In Section \ref{sec:preliminaries}, we give definitions of moduli spaces appearing in this paper and review some well-known properties. In Section \ref{sec:geomofM2}, we study the geometry of $\bM_2$ and compute its numerical invariants. Section \ref{sec:rationality} offers an elementary and classical argument to show the rationality of $\bM_{2}$. In the next two sections, we describe the birational map in Section \ref{sec:rationality} in terms of FM transforms and partial desingularization. In Section \ref{sec:M2andR}, we prove Theorem \ref{mainthm1intro}. Finally, in Section \ref{sec:RandK}, we show Theorem \ref{thm:pardesingintro}.

\subsection{Notation and conventions}

We work over the complex number $\CC$. We will denote the direct sum $F^{m}$ of $m$-copies of a coherent sheaf $F$ by $mF$ if there is no confusion. The boldface letters $\bR$, $\bM$, $\bK$, $\bN$ refer to (coarse) moduli or parameter spaces.

\subsection*{Acknowledgements}
Originally, this paper was motivated from our effort to prove the rationality of $\bM_2$, which was conjectured in \cite{BH14}. It is our pleasure to thank Sukmoon Huh for the suggestion of this problem and useful discussions. The authors would like to thank Young-Hoon Kiem for thorough and helpful comments on earlier drafts of this paper. Part of this work has been done while the first named author was working in Korea Institute for Advanced Study as a research fellow. KC was supported by Korea NRF grant 2013R1A1A2006037.

%%%%%%%%%%%%%%%%%%%%%%%%%%%%%%%%%%%%%

\section{Preliminaries}\label{sec:preliminaries}

In this section, we review definitions and several well-known properties of moduli spaces we will discuss.

\subsection{Moduli space of semistable sheaves}\label{ssec:Simpson}

Let $X$ be a smooth projective variety with fixed polarization $L$. For a coherent sheaf $F$ on $X$, the Hilbert polynomial $P(F)(m)$ is $\chi(F\otimes L^{m})$, where $\chi$ is the Euler characteristic. If the support of $F$ has dimension $d$, $P(F)(m)$ has degree $d$ and it can be written as
\[
	P(F)(m) = \sum_{i=0}^{d}a_{i}\frac{m^{i}}{i!}.
\]
The coefficient $r(F) := a_{d}$ of the highest power is called the multiplicity. The \emph{reduced Hilbert polynomial} is $p(F)(m) := P(F)(m)/r(F)$. A sheaf $F$ is \emph{semistable} if
\begin{itemize}
\item $F$ is pure;
\item for every nonzero proper subsheaf $F' \subset F$, $p(F')(m) \le p(F)(m)$ for $m \gg 0$.
\end{itemize}
We say $F$ is \emph{stable} if the inequality is strict. For each semistable sheaf $F$, there is a filtration (the so-called Jordan-H\"older filtration) $0 = F_{0} \subset F_{1} \subset \cdots \subset F_{n} = F$ such that $\mathrm{gr}_{i}(F) := F_{i}/F_{i-1}$ is stable and $p(F)(m) = p(\mathrm{gr}_{i}(F))(m)$ for all $i$. Finally, two semistable sheaves $F_{1}$ and $F_{2}$ are \emph{$S$-equivalent} if $\mathrm{gr}(F_{1}) \cong \mathrm{gr}(F_{2})$ where $\mathrm{gr}(F) := \oplus_{i}\mathrm{gr}_{i}(F)$.

In his monumental paper \cite{Sim94}, Simpson proved that there is a projective coarse moduli space $\bM_{L}(X, P(m))$ of $S$-equivalence classes of semistable sheaves of fixed Hilbert polynomial $P(m)$. There is an open subset $\bM_{L}(X, P(m))^{s}$ parametrizing stable sheaves. For a great introduction and the details of proofs and related results, see \cite{HL10}.

In this paper, we will focus on two examples. Let $Q \subset \PP(V)$ be a smooth quadric surface. Since $Q \cong \PP^{1}\times \PP^{1}$, we may denote a line bundle as $\cO_{Q}(a, b)$ for two integers $a, b$. Take $L = \cO_{Q}(1,1)$ as a polarization. Consider $\bM_{L}(Q, 4m+2)$. This moduli space has several connected components, which are parametrized by $\beta \in H_{2}(Q, \ZZ)$ representing the scheme theoretic support of $F$. Thus we may write
\[
	\bM_{L}(Q, 4m+2) = \bigsqcup_{\beta \in H_{2}(Q, \ZZ)}
	\bM_{L}(Q, \beta, 4m+2).
\]

\begin{defi}\label{def:M2}
Let $\bM_{2} := \bM_{L}(Q, c_{1}(\cO_{Q}(2, 2)), 4m+2)$. So $\bM_{2}$ is the moduli space of $S$-equivalence classes of semistable torsion sheaves of multiplicity 4, of the support class $c_{1}(\cO_{Q}(2,2))$ and $\chi = 2$. It is a $c_1^2+1=9$-dimensional variety (see \cite[Proposition 2.3]{LP93a}).
\end{defi}

\begin{defi}\label{def:R}
Let $\bR := \bM_{\cO_{\PP^{3}}(1)}(\PP^{3}, m^{2}+3m+2)$. It is also a 9-dimensional variety.
\end{defi}

\subsection{Moduli space of stable pairs}\label{ssec:modofpair}

To analyze some geometric properties of $\bM_{2}$, we will use the moduli space of stable pairs. As in the previous section, let $Q \cong \PP^{1} \times \PP^{1}$ be a smooth quadric surface and let $L = \cO_{Q}(1, 1)$. A pair $(s, F)$ consists of a coherent sheaf $F$ on $Q$ and a nonzero section $\cO_{Q} \stackrel{s}{\to} F$. Fix a positive rational number $\alpha$. A pair $(s,F)$ is called \emph{$\alpha$-semistable} if $F$ is pure and for any subsheaf $F'\subset  F$, the inequality
\[
	\frac{P(F')(m)+\delta\cdot\alpha}{r(F')} \le
	\frac{P(F)(m)+\alpha}{r(F)}
\]
holds for $m\gg 0$. Here $\delta=1$ if the section $s$ factors through $F'$ and $\delta=0$ otherwise. When the strict inequality holds, $(s,F)$ is called as an \emph{$\alpha$-stable} pair. As in the case of sheaves, we can define the Jordan-H\"older filtration and $S$-equivalence classes of pairs.

There exists a projective scheme $\bM_{L}^{\alpha}(Q,P(m))$ parameterizing $S$-equivalence classes of $\alpha$-semistable pairs with Hilbert polynomial $P(m)$ (\cite[Theorem 4.12]{LP93b} and \cite[Theorem 2.6]{He98}). Also, we have a decomposition of the moduli space
\[
	\bM_{L}^{\alpha}(Q,4m+2)=\bigsqcup_{\beta\in H_2(Q, \ZZ)}
	\bM_{L}^{\alpha}(Q,\beta, 4m+2).
\]
We denote the moduli space $\bM_{L}^{\alpha}(Q, c_{1}(\cO_{Q}(2,2)), 4m+2)$ by $\bM_{2}^{\alpha}$. The extremal case that $\alpha$ is sufficiently large (resp. small) is denoted by $\alpha=\infty$ (resp. $\alpha = 0^{+}$).
The deformation theory of pairs has been studied by many authors. For our purpose, see \cite[Corollary 1.6 and Corollary 3.6]{He98}.

\subsection{Kontsevich's moduli space of stable maps}\label{ssec:Kontsevich}

For a smooth projective variety $X$, fix a Chow class $\beta \in A_{1}(X, \ZZ)$. Let $(C, x_{1}, \cdots, x_{n})$ be a projective connected reduced curve with $n$ marked points. A map $f: (C, x_{1}, \cdots, x_{n}) \to X$ is called \emph{stable} if
\begin{itemize}
\item $C$ has at worst nodal singularities;
\item $x_{1}, \cdots, x_{n}$ are $n$ distinct smooth points;
\item $|\mathrm{Aut}(f)| < \infty$, or equivalently, $\omega_{C}(\sum x_{i})$ is $f$-ample.
\end{itemize}
Let $\cM_{g, n}(X, \beta)$ be the moduli stack of $n$-pointed stable maps with $g(C) = g$, $f_{*}[C] = \beta$. It is well-known that $\cM_{g, n}(X, \beta)$ is a proper Deligne-Mumford stack and its coarse moduli space $\bM_{g, n}(X, \beta)$ is a projective scheme (\cite[Theorem 1]{FP97}).

In this paper, let $\bK := \bM_{0, 0}(Gr(2, 4), 2L)$ and let $\bK_{1} := \bM_{0,1}(Gr(2, 4), 2L)$ where $L$ be the effective generator of $A_{1}(Gr(2, 4), \ZZ)$. By \cite[Corollary 1]{KP01}, these are normal irreducible varieties. There is a natural forgetful map $\pi : \bK_{1} \to \bK$ which forgets the marked point. One can compute the dimension of $\bK$ (resp. $\bK_{1}$), which is 9 (resp. 10), by using \cite[Theorem 2]{FP97}.

%%%%%%%%%%%%%%%%%%%%%%%%%%%%%%%%%%%%%

\section{Geometry of $\bM_{2}$}\label{sec:geomofM2}

In this section, we compute some numerical invariants of $\bM_{2}$ by using the variation of moduli spaces of stable pairs.

\subsection{Wall-crossing and geometry of $\bM_{2}$}

Recall that $\bM_{2}^{\alpha}$ is the moduli space of $\alpha$-semistable pairs on $Q$ with Hilbert polynomial $4m+2$ and support class $c_{1}(\cO_{Q}(2,2))$.

In two extremal cases $\alpha = \infty$ and $\alpha = 0^{+}$, we have well-known structure morphisms on $\bM_{2}^{\alpha}$.

\begin{lemm}\label{lemm:extremalstab}
\begin{enumerate}
\item The moduli space $\bM_2^{\infty}$ is isomorphic to the relative Hilbert scheme
\[
	\mathrm{Hilb}^2(\cC/|\cO_{Q}(2,2)|)
\]
of two points over the universal quartic curve $\cC\lr |\cO_{Q}(2,2)|$ in $Q$. The latter space is a $\PP^6$-bundle over the Hilbert scheme $\mathrm{Hilb}^{2}(Q)$ of two points.
\item There exists a forgetful map $\phi:\bM_2^{0^{+}}\longrightarrow \bM_2$ which associates $F$ to the pair $(s,F)$.
\end{enumerate}
\end{lemm}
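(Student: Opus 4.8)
The plan is to prove the two parts separately, since they concern the two opposite extremes of the stability parameter and rely on different structural features of the moduli problem.

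\medskip

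\noindent\textbf{Part (1).} First I would analyze what $\alpha$-stability means when $\alpha = \infty$. For very large $\alpha$, the stability inequality
\[
	\frac{P(F')(m)+\delta\cdot\alpha}{r(F')} \le
	\frac{P(F)(m)+\alpha}{r(F)}
\]
is dominated by the $\alpha$-terms, so the condition becomes that $s$ cannot factor through any proper subsheaf $F'$ with $r(F') < r(F)$; equivalently, the section $s: \cO_Q \to F$ generates $F$ as a sheaf (the cokernel of $s$ is supported in dimension $0$, and purity forces it to vanish). Thus an $\alpha=\infty$-stable pair is exactly a surjection $\cO_Q \twoheadrightarrow F$ where $F$ is a pure sheaf of multiplicity $4$ with support class $c_1(\cO_Q(2,2))$ and $\chi = 2$. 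Dualizing, such data is the same as an ideal: the kernel of $\cO_Q \to F$ is the ideal sheaf of a length-$2$ subscheme on the quartic curve $C = \mathrm{Supp}(F)$. This is precisely what the relative Hilbert scheme $\mathrm{Hilb}^2(\cC/|\cO_Q(2,2)|)$ parametrizes, so the plan is to set up the natural morphism sending $(s,F)$ to the pair (curve $C \in |\cO_Q(2,2)|$, length-$2$ subscheme $Z \subset C$), and check it is an isomorphism by exhibiting the inverse. The second sentence of (1) is then elementary linear algebra: a length-$2$ subscheme $Z$ of $Q$ imposes two conditions on the $9$-dimensional space $H^0(\cO_Q(2,2))$ (the linear system $|\cO_Q(2,2)|$ is $\PP^7$), and requiring $C$ to pass through $Z$ cuts out a $\PP^6$, varying over $\mathrm{Hilb}^2(Q)$; I would phrase this as projectivizing the rank-$7$ kernel bundle of the evaluation map $H^0(\cO_Q(2,2))\otimes \cO \to (\text{length-}2\text{ quotient})$ over $\mathrm{Hilb}^2(Q)$.

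\medskip

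\noindent\textbf{Part (2).} For $\alpha = 0^+$, the $\alpha$-term is a small perturbation, so $0^+$-stability of $(s,F)$ reduces to ordinary semistability of $F$ together with a tie-breaking condition from $s$ on the strictly semistable locus. In particular, every $0^+$-stable pair has underlying sheaf $F$ that is semistable, so the assignment $(s,F) \mapsto F$ lands in $\bM_2$. I would construct $\phi$ at the level of families: given a flat family of $0^+$-stable pairs, forgetting the section yields a flat family of semistable sheaves, and by the universal (coarse) property of $\bM_2$ this induces the morphism $\phi$. That $\phi$ is a well-defined morphism of schemes (rather than merely a set map) follows from the functoriality built into the GIT/Simpson construction, and I would cite the deformation theory of pairs in \cite{He98} to justify that the forgetful operation is compatible with the scheme structures.

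\medskip

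\noindent The step I expect to be the main obstacle is verifying in Part (1) that the natural morphism to the relative Hilbert scheme is an \emph{isomorphism} and not merely a bijection on points: one must show it is an isomorphism of schemes, which amounts to matching the deformation-obstruction theories on both sides (equivalently, checking the morphism induces isomorphisms on tangent spaces and is injective on them). This requires comparing $\Hom$ and $\Ext$ groups controlling deformations of the pair $(s,F)$ with those controlling deformations of the subscheme $Z \subset C$, using the short exact sequence relating $\cO_Q$, $F$, and the ideal of $Z$. The corresponding issue in Part (2) — whether $\phi$ is merely a set-theoretic map — is comparatively routine since I only claim $\phi$ is a morphism, not an isomorphism, so the representability/functoriality argument suffices there.
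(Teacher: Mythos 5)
Your Part (2) is fine and is essentially the paper's argument: one checks that for $\alpha<\frac14$ every $\alpha$-stable pair has semistable underlying sheaf, and then the coarse moduli property of $\bM_2$ produces the morphism. The genuine gap is in Part (1). You correctly extract from the $\alpha\to\infty$ limit that $s$ cannot factor through a subsheaf of smaller multiplicity, i.e.\ that $\coker(s)$ is $0$-dimensional; but the next step, ``purity forces it to vanish,'' is false. Purity of $F$ forbids $0$-dimensional \emph{subsheaves} of $F$, whereas $\coker(s)$ is a \emph{quotient} of $F$, on which purity imposes nothing. In fact in this situation the cokernel can never vanish: a surjection $\cO_Q\twoheadrightarrow F$ with $F$ pure of dimension one and support class $c_1(\cO_Q(2,2))$ would force $F\cong\cO_W$ for a Cohen--Macaulay curve $W$ in the smooth surface $Q$, hence a divisor $W\in|\cO_Q(2,2)|$, and then $\chi(F)=\chi(\cO_W)=0\neq 2$. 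So under your description $\bM_2^\infty$ would be empty, and the identification ``$\ker(\cO_Q\to F)$ is the ideal sheaf of a length-$2$ subscheme of $C$'' cannot be set up. The correct correspondence, which is the one the paper uses following \cite[Proposition B.8]{PT10} and \cite[\S 4.4]{He98}, is dual to yours: to $(Z,C)$ one associates $F=I_{Z,C}^D:=\cExt^1_Q(I_{Z,C},\omega_Q)$, which sits in $\ses{\cO_C}{F}{\cExt_Q^2(\cO_Z,\omega_Q)}$, and the section is the composite $\cO_Q\to\cO_C\hookrightarrow F$; here $\coker(s)$ has length exactly $2$, which is precisely how $\chi(F)=2$ is achieved.

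Two smaller points. First, $|\cO_Q(2,2)|$ is $\PP^8$, not $\PP^7$, since $h^0(\cO_Q(2,2))=9$; your final answer $\PP^6$ is nonetheless correct because $h^0(I_{\{p,q\}}\otimes\cO_Q(2,2))=7$, but this must be verified for \emph{every} length-$2$ subscheme, including non-reduced ones --- that uniformity is what makes the projectivized kernel bundle a genuine $\PP^6$-bundle, and it is exactly the content of the reference the paper cites. Second, your concern about matching deformation theories to upgrade a bijection to an isomorphism is legitimate, but once the correspondence is set up via the duality above it is handled by the cited results rather than by a fresh $\Ext$ comparison.
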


\begin{proof}
The isomorphism in Item (1) is given in the following way (see \cite[Proposition B.8]{PT10} and \cite[\S4.4]{He98}). For a pair $(Z,C) \in \mathrm{Hilb}^2(\cC/|\cO_{Q}(2,2)|)$, we associate an $\infty$-stable pair $(s,I_{Z,C}^D:=\cExt_Q^1(I_{Z,C},\omega_Q))\in \bM_2^{\infty}$ where $s$ is constructed in the following way. Apply the functor $\cHom_Q(-,\omega_{Q})$ in the short exact sequence $\ses{I_{Z,C}}{\cO_C}{\cO_Z}$. Then we obtain a short exact sequence
\begin{equation}\label{dualmap}
	\ses{\cExt_Q^1(\cO_C, \omega_Q)}{I_{Z,C}^D}{\cExt_Q^2(\cO_Z,
	\omega_{Q})}.
\end{equation}
The first map is injective since $\cO_Z$ is supported on a zero-dimensional scheme (\cite[Proposition 1.1.6]{HL10}) and the second map is surjective because of the pureness of $\cO_C$. One can easily see $\cExt_Q^1(\cO_C, \omega_Q)\cong\cO_C$  by using a resolution of $\cO_C$. By composing the restriction map $\cO_Q\lr\cO_C$ and the second map in \eqref{dualmap}, we have a canonical section $s:\cO_Q\lr I_{Z,C}^D$. Note that the length $l(\cExt_Q^2(\cO_Z, \omega_{Q}))=2$, so $I_{Z, C}^{D}$ has Hilbert polynomial $4m+2$.

The second part of Item (1) comes from the fact that $h^0(I_{\{p,q\}}\otimes \cO_Q(2,2))=7$ for any two points $\{p,q\}$ on $Q$ (possibly $p=q$) (see \cite[Example 6.1]{BC13}).

Now choose $\alpha<\frac{1}{4}$. Then one can easily check that if $(s, F)$ is an $\alpha$-stable pair, then $F$ is semistable. Hence the forgetful functor $(s, F) \mapsto F$ indeed defines a map $\phi : \bM_{2}^{0^{+}} \to \bM_{2}$. Thus we have (2).
\end{proof}

Now we study the wall-crossing when the stability parameter $\alpha$ decreases. We denote the pair $(s,F)$ by $(1,F)$ if the section $s\in H^0(F)$ is non-zero and $(0,F)$ otherwise.

\begin{prop}\label{wall}
There is a simple wall crossing
\[
	\xymatrix{\bM_2^{\infty}\ar[dr]\ar@{<--}[r]&&
	\bM_2^{0^{+}}\ar[dl]\ar@{<--}[l]\\
	&\bM_2^{2}&}
\]
at $\alpha=2$. When $\alpha = 2$, an irreducible component of the flipping locus is the subvariety parametrizing $S$-equivalence classes of $(1,F)=(1,\cO_C)\oplus (0,\cO_L)$ for a rational cubic curve $C$ of the class $\cO_Q(2, 1)$ (resp. $\cO_Q(1,2)$) and a line $L$ of the class $\cO_Q(0, 1)$ (resp. $\cO(1,0)$). Furthermore, the moduli space $\bM_2^{0^{+}}$ is a smooth variety of dimension $10$.
\end{prop}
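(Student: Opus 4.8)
The plan is to run the standard variation-of-stability analysis for $\alpha$-stable pairs: locate the walls by a numerical criterion, check that $\alpha=2$ is the only one, and then analyze the resulting single wall-crossing geometrically. The starting point is the numerical wall condition. A strictly $\alpha$-semistable pair $(s,F)$ admits a saturated subsheaf $F'\subset F$ for which the defining inequality of $\alpha$-stability becomes an equality. Writing $r'=r(F')$, $\chi'=\chi(F')$, and noting $r(F)=4$, $\chi(F)=2$ for the support class $c_1(\cO_Q(2,2))$, the $m$-linear terms of the reduced Hilbert polynomials automatically cancel and the wall is dictated by the constant terms:
\[
	\frac{\chi'+\delta\alpha}{r'}=\frac{2+\alpha}{4},
	\qquad\text{equivalently}\qquad
	\alpha=\frac{2r'-4\chi'}{4\delta-r'},
\]
where $\delta\in\{0,1\}$ according to whether $s$ factors through $F'$.

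I would then enumerate the finitely many admissible triples $(r',\chi',\delta)$ with $r'\in\{1,2,3\}$, discarding those not realized by an actual strictly semistable pair: $F'$ and $F/F'$ must be pure one-dimensional sheaves whose support classes sum to $(2,2)$, the sheaf $F$ must itself be semistable, and $\delta=1$ requires $h^0(F')>0$. The triple $(3,1,1)$ gives $\alpha=2$, realized by $F'=\cO_C$ with $C$ a rational cubic of class $\cO_Q(2,1)$ (or $\cO_Q(1,2)$); its complementary factor is $\cO_L$ for a line $L$ of class $\cO_Q(0,1)$ (or $\cO_Q(1,0)$), and the complementary triple $(1,1,0)$ independently returns $\alpha=2$. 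This exhibits $(1,\cO_C)\oplus(0,\cO_L)$ as strictly $2$-semistable with these two pairs as its Jordan-H\"older factors, so its $S$-equivalence class sweeps out the stated component of the flipping locus. The main combinatorial task is the elimination step: every remaining triple must either return $\alpha=2$ again or fail the realizability constraints above — for instance, a nonzero section forces $\delta=1$ whenever the complementary factor has no sections, and candidates such as $\cO_C(p)\oplus\cO_L(-1)$ are excluded because the sheaf $F$ is not semistable. Carrying this out shows $\alpha=2$ is the unique wall in $(0,\infty)$.

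Granting uniqueness of the wall, the chamber $(2,\infty)$ has constant moduli $\bM_2^{\infty}$ and the chamber $(0,2)$ has constant moduli $\bM_2^{0^+}$, and the general theory of moduli of pairs provides contraction morphisms $\bM_2^{\infty}\to\bM_2^{2}\leftarrow\bM_2^{0^+}$ to the moduli space at the wall: crossing $\alpha=2$ trades pairs containing $(1,\cO_C)$ as a sub-pair for pairs containing $(0,\cO_L)$ as a quotient-pair. This is the asserted flip diagram.

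Finally, for the last assertion I would first get the dimension from the smooth side: by Lemma \ref{lemm:extremalstab}(1), $\bM_2^{\infty}$ is a $\PP^6$-bundle over $\mathrm{Hilb}^2(Q)$, hence smooth of dimension $6+4=10$, so birationality forces $\dim\bM_2^{0^+}=10$. For smoothness I would realize the wall-crossing concretely as the blow-up of $\bM_2^{\infty}$ along the (smooth) flipping locus followed by the blow-down of the resulting exceptional divisor onto $\bM_2^{0^+}$; since one blows up a smooth variety along a smooth center and then contracts a projective bundle, $\bM_2^{0^+}$ comes out smooth. I expect this last picture to be the main obstacle: it requires computing the normal bundle of the flipping locus and identifying the two projective-bundle structures on the exceptional divisor, which I would do through the groups $\Ext^i$ between the Jordan-H\"older factors $(1,\cO_C)$ and $(0,\cO_L)$ (the two families of extensions, taken in opposite directions, giving the two rulings of the exceptional divisor). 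Alternatively one can bypass the geometry and verify smoothness directly from the deformation theory of pairs in \cite[Corollary 1.6 and Corollary 3.6]{He98}, showing the obstruction space vanishes for every $0^+$-stable pair.
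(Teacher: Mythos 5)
Your overall strategy coincides with the paper's: classify walls numerically via the equality of reduced pair-Hilbert-polynomials, identify $\alpha=2$ as the unique wall realized by $(1,\cO_C)\oplus(0,\cO_L)$ with $C$ of class $\cO_Q(2,1)$ or $\cO_Q(1,2)$, and prove smoothness of $\bM_2^{0^+}$ by deformation theory of pairs. The genuine gap is that the elimination step --- the part you yourself flag as ``the main combinatorial task'' --- is asserted rather than carried out, and it is not routine: with $\delta=1$ the triples $(2,0,1)$, $(3,0,1)$, $(3,-1,1)$, \dots{} return candidate walls at $\alpha=2,6,10,\dots$, and their $\delta=0$ complements do likewise, so one must actually prove that no semistable pure sheaf on $Q$ with Hilbert polynomial $2m$, $3m$, $m$, $m-1$, etc.\ admits a nonzero section. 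The paper's key input here is the upper bound on $h^0$ of semistable one-dimensional sheaves from \cite[Theorem 1.3]{CC13a}, which reduces the possibilities to the two decompositions $(1,F_{3m})\oplus(0,F_{m+2})$ and $(1,F_{3m+1})\oplus(0,F_{m+1})$; the first is then killed geometrically (a semistable sheaf with Hilbert polynomial $3m$ is $\cO_C$ for a plane cubic, and $Q$ contains no plane cubic), and the second is pinned down by the classification in \cite{FT04}. Without citing such a bound, your ``realizability constraints'' do not yet constitute a proof. Also note that your example $\cO_C(p)\oplus\cO_L(-1)$ is excluded for numerical reasons ($\alpha<0$), not because $F$ fails sheaf-semistability --- at positive $\alpha$ the underlying sheaf of an $\alpha$-semistable pair need not be semistable, so sheaf-semistability is not the right elimination criterion.

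On the last assertion, your primary route (blow-up of $\bM_2^\infty$ along the flipping locus followed by a blow-down) is the wrong order of logic: establishing that picture presupposes knowing the local structure of $\bM_2^{0^+}$ at the wall, which is what is to be proved. Your fallback is exactly the paper's argument and is the one to use: away from the flipping locus $\bM_2^{0^+}$ is identified with the corresponding (smooth) open subset of $\bM_2^\infty$, so one only needs $\Ext^2((1,F),(1,F))=0$ for $(1,F)$ in the flipping locus; this follows from the exact sequence of \cite[Corollary 1.6]{He98}, using $H^1(F)=0$ (forced by the nonsplit extension $\ses{(1,\cO_C)}{(1,F)}{(0,\cO_L)}$) together with $\Ext^2(F,F)\cong\Hom(F,F(-2,-2))^*=0$ by semistability, and the dimension $10$ then comes from $\chi(\Ext^\bullet((1,F),(1,F)))$ (equivalently, from your $\PP^6$-bundle description of $\bM_2^\infty$).
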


Let $(1,F)\in \PP\Ext_Q^1((1,\cO_C),(0,\cO_L))\cong \PP^2$ be an $\infty$-stable pair. Then it fits into the exact sequence $\ses{(0,\cO_L)}{(1,F)}{(1,\cO_C)}$. Geometrically, the pair $(1,F)$ corresponds to a pair of subschemes $(Z, L\cup C)$ such that $Z$ is contained in $L$. After the wall-crossing, the pair $(1, F)$ is replaced by a pair $(1, G)$ in an extension obtained by switching the sub pairs and the quotient pairs: $\ses{(1,\cO_C)}{(1,G)}{(0,\cO_L)}$ (\cite[Proof of Lemma 4.24]{He98}). Hence we obtain $0^+$-stable pairs $(1,G)\in \PP\Ext_Q^1((0,\cO_L),(1,\cO_C))\cong \PP^1$.

\begin{proof}[Proof of Proposition \ref{wall}]
Consider a pair $(s,F)$ on $Q$. From an upper bound of the dimension of the space of global sections of semistable sheaves (\cite[Theorem 1.3]{CC13a}), one can see that the possible wall is one of the following:
\begin{enumerate}
\item $(1,F_{4m+2})=(1,F_{3m})\oplus (0,F_{m+2})$ or
\item $(1,F_{4m+2})=(1,F_{3m+1})\oplus (0,F_{m+1})$.
\end{enumerate}
Here $F_k$ means a semistable sheaf with Hilbert polynomial $k$.

But the case (1) cannot occur because $F_{3m}\cong \cO_C$ for some plane cubic curve $C$ and thus it is not contained in the quadric surface $Q$.
Hence the wall occurs if and only if $(1,F_{4m+2})=(1,F_{3m+1})\oplus (0,F_{m+1})$. By the classification of stable sheaves with small Hilbert polynomials in $\PP^3$ (\cite{FT04}), we know that $F_{3m+1}\cong \cO_C$ and $F_{m+1}=\cO_L$ for some twisted cubic curve $C$ and a line $L$.

We show that $\bM_2^{0^{+}}$ is a smooth variety. Since the complement of the wall crossing locus in $\bM_2^{0^{+}}$ is isomorphic to the corresponding locus in $\bM_2^{\infty}$ and $\chi(\Ext^{\bullet}((1,F), (1,F)))=10$, it is sufficient to check that the obstruction space $\Ext^2((1,F),(1,F))$ vanishes for the pairs $(1,F)$ in the wall crossing locus of $\bM_2^{0^{+}}$ (\cite[Corollary 3.10, Theorem 3.12]{He98}). The pair $(1,F)$ fits into a nontrivial extension $\ses{(1,\cO_C)}{(1,F)}{(0,\cO_L)}$. This implies that $H^1(F)=0$ and $F$ is a semistable sheaf on $Q$. From \cite[Corollary 1.6]{He98}, we have an exact sequence
\[
	\Hom(\CC, H^1(F))\lr \Ext^2((1,F),(1,F)) \lr \Ext^2(F,F)\cong
	\Ext^0(F,F(-2,-2))^*
\]
where the last isomorphism is given by the Serre duality. But  $\Ext^0(F,F(-2,-2))=0$ because of the semistability of $F$.
\end{proof}

\begin{lemm}\label{lemm:basedecom}
Let $F \in \bM_{2}\setminus \bM_{2}^{s}$ be a polystable sheaf, that is, a semistable sheaf satisfying $F \cong \mathrm{gr}(F)$. Then $F \cong \cO_{C_{1}}\oplus \cO_{C_{2}}$ for two conics $C_1$ and $C_2$ on $Q$.
\end{lemm}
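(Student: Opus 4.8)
The plan is to reduce the statement to a classification of the stable Jordan--H\"older factors of $F$ by a purely numerical argument, and then to identify those factors geometrically.

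First I would record the relevant numerics. With respect to the polarization $L = \cO_Q(1,1)$, a sheaf of support class $c_1(\cO_Q(2,2))$ has multiplicity $r(F) = (2,2)\cdot(1,1) = 4$, and since $\chi(F) = P(F)(0) = 2$, its reduced Hilbert polynomial is $p(F)(m) = m + \tfrac12$. Because $F$ is polystable, $F \cong \mathrm{gr}(F) = \bigoplus_i F_i$ with each $F_i$ stable and $p(F_i)(m) = m + \tfrac12$. If $F_i$ has multiplicity $r_i$, then $p(F_i) = m + \tfrac12$ forces $\chi(F_i) = r_i/2$; integrality of $\chi(F_i)$ makes $r_i$ even, and $\sum_i r_i = 4$ together with $r_i \neq 4$ (otherwise $F = F_i$ would be stable, contradicting $F \in \bM_2 \setminus \bM_2^s$) leaves only $r_i = 2$. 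Hence $F = F_1 \oplus F_2$ with each $F_i$ stable of multiplicity $2$ and $\chi(F_i) = 1$.

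Next I would pin down the support class $\beta_i$ of each $F_i$. It is an effective class with $\beta_i \cdot (1,1) = 2$ and $\beta_1 + \beta_2 = (2,2)$, so either both $\beta_i = (1,1)$, or $\{\beta_1,\beta_2\} = \{(2,0),(0,2)\}$. I would rule out the second possibility by showing that on a curve of class $(2,0)$ there is no \emph{stable} sheaf of multiplicity $2$ and $\chi = 1$. Such a support is either a pair of disjoint fibers of one ruling (or otherwise reduced), in which case a pure multiplicity-$2$ sheaf either splits along the disconnected support or would have to be a rank-$2$ bundle on $\Po$ and hence is never stable; or it is a nonreduced double fiber $2L$ with $L^2 = 0$, in which case a rank-$1$ torsion-free sheaf is a line bundle $\cL$ sitting in a sequence $\ses{\cO_L(d)}{\cL}{\cO_L(d)}$ (using $N_{L/Q} \cong \cO_L$), forcing $\chi(\cL)$ even and contradicting $\chi = 1$. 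Therefore $\beta_1 = \beta_2 = (1,1)$, that is, each $F_i$ is supported on a conic $C_i$.

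Finally I would identify $F_i$. Since $\beta_i = (1,1)$ has $(1,1)$-degree $2 = r(F_i)$, the sheaf $F_i$ is a rank-$1$ torsion-free sheaf on a conic $C_i$ of class $(1,1)$, which is necessarily reduced (a smooth conic, or a node $L_1 \cup L_2$ of lines of classes $(1,0),(0,1)$). For $C_i$ smooth, $C_i \cong \Po$ and a rank-$1$ torsion-free sheaf with $\chi = 1 = \chi(\cO_{C_i})$ is the degree-$0$ line bundle $\cO_{C_i}$. For $C_i$ nodal I would verify that $\cO_{C_i}$ is the only stable such sheaf: a line bundle of multidegree $(d_1,d_2)$ with $d_1 + d_2 = 0$ and $(d_1,d_2) \neq (0,0)$ admits, on the component of higher degree, a subsheaf of reduced Hilbert polynomial exceeding $m + \tfrac12$, hence is unstable, while the non-locally-free rank-$1$ torsion-free sheaf is a pushforward from the normalization $L_1 \sqcup L_2$ and splits. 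This gives $F_i \cong \cO_{C_i}$ and thus $F \cong \cO_{C_1} \oplus \cO_{C_2}$.

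The main obstacle is this last classification step, namely showing that the stable multiplicity-$2$, $\chi = 1$ factors are exactly the structure sheaves of $(1,1)$-conics. The parity argument excluding $(2,0)$-supports and the destabilization analysis on a nodal conic are the delicate points; once they are in place the rest of the statement is forced by the numerics.
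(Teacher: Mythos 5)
Your proof follows the same outline as the paper's: reduce, via integrality of Hilbert polynomials, to the claim that each Jordan--H\"older factor is a stable sheaf with Hilbert polynomial $2m+1$, and then identify all such sheaves as structure sheaves of conics of class $(1,1)$. The paper asserts that classification without proof, so supplying it is the right instinct, and your numerical reduction, the support-class dichotomy $(1,1)+(1,1)$ versus $(2,0)+(0,2)$, and the analysis on reduced $(1,1)$-conics are all correct.

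There is, however, a genuine gap in the non-reduced $(2,0)$ case. You claim that a rank-one torsion-free sheaf on the ribbon $2L$ is a line bundle and then invoke the parity of $\chi$. On a non-reduced curve, rank-one torsion-free sheaves need not be locally free: for instance $I_{p,2L}$ for a closed point $p\in L$ is pure of multiplicity $2$ with $\chi=\chi(\cO_{2L})-1=1$, i.e.\ it has Hilbert polynomial exactly $2m+1$, so the parity contradiction simply does not apply to it (nor, more generally, to $I_{D,2L}\otimes M$ with $\deg D$ odd). These sheaves must instead be excluded by instability. One self-contained way: if $E$ is pure of multiplicity $2$ with $I_LE\neq 0$ and $I_L^2E=0$, then $I_LE$ and the torsion-free quotient $\cO_L(a)$ of $E\otimes\cO_L$ are line bundles on $L$ of the \emph{same} degree $a$, because the multiplication map $(E\otimes\cO_L)\otimes N^{\vee}_{L/Q}\twoheadrightarrow I_LE$ kills torsion and a surjection of line bundles on $\PP^1$ is an isomorphism; hence $\chi(E)=2a+2+\ell$ where $\ell\geq 0$ is the length of the torsion of $E\otimes\cO_L$. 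Now $\chi(E)=1$ forces $a\leq -1$, and the pure quotient $E\twoheadrightarrow\cO_L(a)$ has reduced Hilbert polynomial $m+a+1\leq m<m+\tfrac12$, destabilizing $E$. With this case added, your classification step, and hence the proof, is complete.
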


\begin{proof}
Note that if $F \in \bM_2$ is strictly semistable, then the destabilizing subsheaf must have the Hilbert polynomial $2m+1$, because this is the only nontrivial integer valued polynomial dividing $4m+2$. The only semistable sheaf with this Hilbert polynomial is $\cO_{C_{1}}$ for some conic $C_{1}$ in $Q$. The quotient sheaf also has the same Hilbert polynomial $2m+1$ and hence it is $\cO_{C_{2}}$ for some conic $C_{2}$ in $Q$. Therefore $F \cong \mathrm{gr}(F) \cong \cO_{C_{1}} \oplus \cO_{C_{2}}$.
\end{proof}

\begin{coro}[\protect{\cite[Corollary 23]{BH14}}]\label{cor:sslocus}
The strictly semistable locus $\bM_{2}\setminus \bM_{2}^{s}$ is isomorphic to $\PP^{3}\times \PP^{3}/\ZZ_{2}$.
\end{coro}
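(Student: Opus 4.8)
The plan is to identify the strictly semistable locus set-theoretically with the symmetric square of the space of conics, and then to upgrade this bijection to an isomorphism of varieties. First I would pin down the space of conics. By the argument in the proof of Lemma \ref{lemm:basedecom}, a semistable sheaf with Hilbert polynomial $2m+1$ is exactly $\cO_C$ for a curve $C$ of support class $c_1(\cO_Q(1,1))$; conversely, a direct computation with $0\to\cO_Q(-1,-1)\to\cO_Q\to\cO_C\to 0$ shows every $C\in|\cO_Q(1,1)|$ has $P(\cO_C)=2m+1$. Since $\cO_Q(1,1)=\cO_{\PP^3}(1)|_Q$ and $h^0(\cO_Q(1,1))=4$, the linear system $|\cO_Q(1,1)|$ is a $\PP^3$, and every member is reduced (a smooth conic or a pair of transverse lines), so $\cO_C$ is pure. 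Hence $C\mapsto[\cO_C]$ is a bijection from $\PP^3$ onto the set of stable sheaves of Hilbert polynomial $2m+1$.

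For the set-theoretic identification, a point of $\bM_2\setminus\bM_2^s$ is an $S$-equivalence class, which contains the unique polystable representative $\mathrm{gr}(F)$; by Lemma \ref{lemm:basedecom} this is $\cO_{C_1}\oplus\cO_{C_2}$, and such a direct sum determines, and is determined by, the unordered pair $\{C_1,C_2\}$. This gives a bijection between $\bM_2\setminus\bM_2^s$ and $\Sym^2\PP^3=(\PP^3\times\PP^3)/\ZZ_2$.

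To promote this to a morphism, I would work over $\PP^3\times\PP^3=|\cO_Q(1,1)|\times|\cO_Q(1,1)|$, let $\cC_1,\cC_2$ be the two universal divisors, and observe that $\cO_{\cC_1}\oplus\cO_{\cC_2}$ is a flat family of semistable sheaves of Hilbert polynomial $4m+2$ and class $c_1(\cO_Q(2,2))$. By the coarse moduli property this family induces a morphism $\PP^3\times\PP^3\to\bM_2$ with image the strictly semistable locus, invariant under the swap $\ZZ_2$, hence descending to a morphism $\overline\Phi:(\PP^3\times\PP^3)/\ZZ_2\to\bM_2\setminus\bM_2^s$ that is bijective by the previous step. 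The source is a normal projective variety.

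The main obstacle is to conclude that $\overline\Phi$ is an isomorphism and not merely a bijective morphism, since $\bM_2$ is singular precisely along this strictly semistable locus (the polystable points have positive-dimensional stabilizers in Le Potier's GIT presentation), so the scheme structure must be matched with care. I would verify that $\overline\Phi$ is unramified by comparing the tangent space of $\Sym^2\PP^3$ at $\{C_1,C_2\}$ with the part of $\Ext^1(\cO_{C_1}\oplus\cO_{C_2},\cO_{C_1}\oplus\cO_{C_2})$ controlling deformations within the semistable locus; being proper, bijective, and unramified, $\overline\Phi$ is then a closed immersion onto the reduced locus, hence an isomorphism. Equivalently, one can apply Luna's \'etale slice theorem at a polystable point to produce a local model of $\bM_2$ and read off the locus directly. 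This tangent-level (or local-model) comparison along the singular locus is the delicate point; the set-theoretic steps are routine.
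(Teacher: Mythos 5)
Your argument matches the paper's: Corollary \ref{cor:sslocus} is stated there as an immediate consequence of Lemma \ref{lemm:basedecom} (with details delegated to the citation of \cite{BH14}), and your set-theoretic identification of polystable sheaves with unordered pairs of conics in $|\cO_Q(1,1)|\cong\PP^3$ is exactly that argument. The only addition is your (correctly flagged) scheme-theoretic upgrade via a universal family and a Luna-slice/tangent-space check at polystable points, which is the standard way to promote the bijection to an isomorphism and is consistent with the local model $\Ext^1(F,F)\git \Aut(F)$ that underlies the computations elsewhere in the paper.
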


In the next proposition we compute analytic locally the fibers of the last forgetful map $\phi : \bM_{2}^{0^{+}} \longrightarrow \bM_{2}$.

\begin{prop}\label{prop:fiberofphi}	
Let $\phi : \bM_{2}^{0^{+}} \to \bM_{2}$ be the forgetful map $(s, F) \mapsto F$.
\begin{enumerate}
\item Over $\bM_{2}^{s}$, $\phi$ is a $\PP^1$-fibration;
\item Let $D \subset \PP^{3} \times \PP^{3}/\ZZ_{2}$ be the diagonal. Over $(\PP^{3} \times \PP^{3}/\ZZ_{2} \setminus D) \subset \bM_{2}\setminus \bM_{2}^{s}$, $\phi$ is a $\PP^2\cup_{p=q}\PP^{2}$-fibration over $\PP^3\times \PP^3/\ZZ_{2} \setminus D$, where $\PP^{2}\cup_{p=q}\PP^{2}$ is a reducible variety obtained by gluing two $\PP^{2}$'s along two closed points $p$ and $q$. On $(\PP^{2}\setminus \{p\}) \sqcup (\PP^{2}\setminus\{q\})$, the fiber parametrizes extensions $(1,F)\in\PP\Ext^1((1,\cO_{C_1}),(0,\cO_{C_2}))\cong \PP^{2}$ for $C_1\neq C_2 \in |\cO_{Q}(1,1)| = \PP^{3}$. The gluing point $p$ ($=q$) parametrizes the unique pair $(s,\cO_{C_1}\oplus \cO_{C_2})$ for conics $C_1 \neq C_2$.
\item Over $D \cong \PP^{3} = |\cO_{Q}(1,1)| \subset \bM_{2} \setminus \bM_{2}^{s}$, $\phi$ is a $\PP^2$-fibration parametrizing pairs $(1,F)\in\PP\Ext^1((1,\cO_{C}),(0,\cO_{C}))\cong \PP^{2}$ for $C \in |\cO_{Q}(1,1)|$.
\end{enumerate}
\end{prop}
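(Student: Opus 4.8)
The plan is to compute each fiber $\phi^{-1}([F])$ directly from the definition, after first making the $0^{+}$-stability condition explicit. Sending $\alpha \to 0^{+}$ in the defining inequality, the leading term in $m$ forces $F$ to be semistable, while for a destabilizing subsheaf $F' \subset F$ (one with $p(F') = p(F)$) the subleading $\alpha$-terms give $\delta/r(F') \le 1/r(F)$; since $r(F') < r(F)$, strictness fails precisely when $\delta = 1$. Hence $(s,F)$ is $0^{+}$-stable if and only if $F$ is semistable and the section $s$ does not factor through any destabilizing subsheaf. Consequently $\phi^{-1}([F])$ is the set of isomorphism classes of $0^{+}$-stable pairs $(s,G)$ with $G$ $S$-equivalent to $F$, and I will stratify according to the three loci in the statement.

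Over $\bM_{2}^{s}$ (Item (1)), $F$ is stable, so $\Aut(F) = \CC^{\ast}$ and every nonzero section is admissible; thus $\phi^{-1}([F]) = \PP(H^{0}(F))$. Since $\chi(F) = 2$, it remains to show $h^{0}(F) = 2$, i.e.\ $h^{1}(F) = 0$. This follows from the upper bound on global sections of semistable sheaves in \cite[Theorem 1.3]{CC13a} (already used in the proof of Proposition \ref{wall}), which gives $h^{0}(F) \le 2$, together with $h^{0} - h^{1} = 2$. Because $h^{0}$ is then constant and $h^{1} = 0$, cohomology and base change makes $R^{0}\pi_{\ast}\cF$ locally free of rank $2$ for a (twisted) universal sheaf $\cF$, and $\phi$ restricts to the projective bundle $\PP(R^{0}\pi_{\ast}\cF)$, a $\PP^{1}$-fibration.

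For the strictly semistable locus, Lemma \ref{lemm:basedecom} and Corollary \ref{cor:sslocus} identify $[F]$ with an unordered pair of conics $\{C_{1},C_{2}\}$, and the sheaves $G$ in the $S$-equivalence class are the split sheaf $\cO_{C_{1}}\oplus\cO_{C_{2}}$ together with the nonsplit extensions in either direction. The key computation is the pair-$\Ext$ group controlling these: applying \cite[Corollary 1.6]{He98} to $(1,\cO_{C_{i}})$ and $(0,\cO_{C_{j}})$ and computing $\Ext^{\bullet}(\cO_{C_{i}},\cO_{C_{j}})$, $H^{0}(\cO_{C_{j}})$, and $\Hom(\cO_{C_{i}},\cO_{C_{j}})$ from the Koszul resolution $\ses{\cO_{Q}(-1,-1)}{\cO_{Q}}{\cO_{C_{i}}}$, I expect $\dim\Ext^{1}((1,\cO_{C_{i}}),(0,\cO_{C_{j}})) = 3$ in every case, giving the planes $\PP\Ext^{1}(\ldots)\cong\PP^{2}$. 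When $C_{1}\neq C_{2}$ (Item (2)) the two orderings produce two such planes; checking $0^{+}$-stability shows that a nonsplit $G$ has the opposite conic as its unique destabilizing subsheaf, so $s$ is automatically admissible and the nonsplit pairs fill out $\PP^{2}\setminus\{\text{pt}\}$ in each plane, while the split $G$ admits, up to $\Aut(\cO_{C_{1}}\oplus\cO_{C_{2}}) = (\CC^{\ast})^{2}$, a unique admissible section, giving the single gluing point. When $C_{1}=C_{2}=C$ (Item (3)), every nonzero section of $\cO_{C}^{\oplus 2}$ factors through some destabilizing $\cO_{C}$, so the split sheaf contributes nothing and only the single plane $\PP\Ext^{1}((1,\cO_{C}),(0,\cO_{C}))\cong\PP^{2}$ survives.

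The step I expect to be the main obstacle is upgrading this set-theoretic fiber description to the asserted fibration structure, and in particular pinning down the gluing in Item (2). Concretely, I will need to (i) work in families by replacing the groups above with the relative $\cExt$-sheaves over the base strata and checking they are locally free of the expected rank, so that the fibers assemble into projective bundles; and (ii) prove that the two plane-families meet exactly along the split pair, that this identification is the point $p=q$, and that the resulting scheme structure is the reduced nodal gluing $\PP^{2}\cup_{p=q}\PP^{2}$ rather than something non-reduced. For (ii) I plan to use the smoothness and dimension count for $\bM_{2}^{0^{+}}$ from Proposition \ref{wall}: since the total space is smooth of dimension $10$ over a $6$-dimensional base, a local analysis of a chart of $\bM_{2}^{0^{+}}$ around the split pair, tracking how both nonsplit families degenerate as the sheaf-level extension class tends to $0$, should identify the common limit and the transverse gluing. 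The $\ZZ_{2}$ swapping $C_{1}$ and $C_{2}$ interchanges the two planes and is compatible with this picture on $\PP^{3}\times\PP^{3}/\ZZ_{2}\setminus D$.
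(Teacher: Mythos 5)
Your proposal is correct and follows essentially the same route as the paper: both arguments reduce to the pointwise description of the fibers via the explicit $0^{+}$-stability criterion, the computation of $\Ext^1((1,\cO_{C_1}),(0,\cO_{C_2}))\cong\CC^3$ from He's long exact sequence together with $\Ext^1(\cO_{C_1},\cO_{C_2})\cong\CC^2$ from the Koszul resolution, and the action of $\Aut(\cO_{C_1}\oplus\cO_{C_2})\cong(\CC^*)^2$ on sections to produce the single gluing point. The family-level/scheme-structure issue you flag at the end is real but is not addressed in the paper either (the fibration structures are only used later for the virtual Poincar\'e polynomial, where the constructible decomposition suffices), so your plan is, if anything, more careful than the published argument.
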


\begin{proof}
Let $F$ be a stable sheaf. The inverse image $\phi^{-1}([F])$ consists of the $0^+$-stable pairs $(s,F)$ such that $s\subset H^0(F)$. This implies that the fiber is isomorphic to $\PP^1$ because $h^0(F)=2$. This proves Item (1).

Suppose that $(s,F)$ is in the inverse image of the strictly semistable locus. We may assume that $\mathrm{gr}(F)=[\cO_{C_1}\oplus \cO_{C_2}]$ for two conics $C_1, C_2 \in |\cO_{Q}(1,1)|$. First assume that $C_1\neq C_2$ and $F\ncong \cO_{C_1}\oplus \cO_{C_2}$. Then by \cite[Corollary 1.6]{He98}, there exists a long exact sequence
\[
\Ext^0(\cO_{C_1},\cO_{C_2} )\lr \Hom (\CC,H^0(\cO_{C_2}))\lr \Ext^1((1,\cO_{C_1}),(0,\cO_{C_2}))
\]
\[
\lr \Ext^1(\cO_{C_1},\cO_{C_2} )\lr \Hom (\CC,H^1(\cO_{C_2})).
\]
Since $C_1\neq C_2$, we have $\Ext^0(\cO_{C_1},\cO_{C_2} )=0$. Also, $\Hom (\CC,H^0(\cO_{C_2}))\cong \CC$ and $\Hom (\CC,H^1(\cO_{C_2}))=0$. By using the standard resolution of $\cO_{C_1}$, we can compute that $\Ext^1(\cO_{C_1},\cO_{C_2} )\cong \CC^2$. Thus we obtain a short exact sequence
\[
\ses{\Hom (\CC,H^0(\cO_{C_2}))\cong\CC}{\Ext^1((1,\cO_{C_1}),(0,\cO_{C_2})) }{ \Ext^1(\cO_{C_1},\cO_{C_2} )\cong \CC^2}.
\]

The variety $\PP\Ext^1((1,\cO_{C_1}),(0,\cO_{C_2}))\cong \PP^2 \setminus \PP\Hom (\CC,H^0(\cO_{C_2}))\cong\PP^{2} - \mathrm{pt}$ (the point corresponds to the trivial extension) parametrizes stable pairs whose underlying sheaves are nontrivial extensions in the fiber $\phi^{-1}([F])$. When $F \cong \cO_{C_1}\oplus \cO_{C_2}$ for $C_1\neq C_2 \in |\cO_{Q}(1,1)|$, the automorphism group of $F$ is isomorphic to $\CC^*\times \CC^*$, which acts on the section space $H^0(F)\cong H^0(\cO_{C_1})\oplus H^0(\cO_{C_2}) \cong \CC\oplus \CC$ diagonally. Now if $(s,F)=((s_1,s_2),F)$ where $s_1\cdot s_2=0$, then one can easily check that $(s,F)$ is not $0^+$-stable. If $s_1\cdot s_2\neq 0$, then there is a unique  $0^+$-stable pair $(s,F)$ because $\mathrm{Aut}(F)\cong \CC^*\times \CC^*$.

Finally, we prove (3). Let $\mathrm{gr}(F)\cong\cO_{C}\oplus \cO_{C}$ for a conic $C \in |\cO_{Q}(1,1)|$. Then by the same computation as in the case (2), one can see that $\PP\Ext^1((1,\cO_{C}),(0,\cO_{C}))\cong \PP^{2}$.
\end{proof}

\subsection{Some numerical invariants}

Here we leave computations of some numerical invariants of $\bM_{2}$.

\begin{prop}\label{pic}
The rank of the Picard group of $\bM_2$ is 3. Furthermore, $\bM_{2}$ is $\QQ$-factorial.
\end{prop}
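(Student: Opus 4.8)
The plan is to compute $\rank\mathrm{Pic}(\bM_2)$ and $\rank\mathrm{Cl}(\bM_2)$ separately by transporting everything to the smooth birational models $\bM_2^{\infty}$ and $\bM_2^{0^{+}}$ and then pushing through the fibration $\phi$; the $\QQ$-factoriality will fall out of the equality of these two ranks, since a normal variety with finitely generated class group is $\QQ$-factorial exactly when $\rank\mathrm{Pic}=\rank\mathrm{Cl}$.

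First I would fix the Picard rank of the smooth model. By Lemma \ref{lemm:extremalstab}, $\bM_2^{\infty}$ is a $\PP^6$-bundle over $\mathrm{Hilb}^2(Q)$, and since $Q\cong\PP^1\times\PP^1$ has $\mathrm{Pic}(Q)=\ZZ^{2}$ one has $\rank\mathrm{Pic}(\mathrm{Hilb}^2(Q))=3$ (the two boundary classes together with the half-diagonal class), hence $\rank\mathrm{Pic}(\bM_2^{\infty})=4$. By Proposition \ref{wall} the wall-crossing at $\alpha=2$ only alters the loci lying over the $6$-dimensional wall: a $\PP^2$-bundle on the $\bM_2^{\infty}$ side (codimension $2$) is replaced by a $\PP^1$-bundle on the $\bM_2^{0^{+}}$ side (codimension $3$). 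Thus $\bM_2^{\infty}\dashrightarrow\bM_2^{0^{+}}$ is an isomorphism in codimension one, and as $\bM_2^{0^{+}}$ is smooth this gives $\rank\mathrm{Pic}(\bM_2^{0^{+}})=4$.

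Next I would descend to $\bM_2$ through $\phi$. By Corollary \ref{cor:sslocus} the singular locus $\Sigma=\bM_2\setminus\bM_2^{s}\cong\PP^3\times\PP^3/\ZZ_2$ has codimension $3$, and by Proposition \ref{prop:fiberofphi}(1) $\phi$ is a $\PP^1$-fibration over the smooth locus $\bM_2^{s}$. Over $\Sigma$ the fibers jump to dimension $2$, so $\phi^{-1}(\Sigma)$ has codimension $2$ in $\bM_2^{0^{+}}$; removing it keeps the Picard rank at $4$, and the $\PP^1$-fibration rank formula yields $\rank\mathrm{Pic}(\bM_2^{s})=3$. Since $\Sigma$ has codimension $\ge 2$ and $\bM_2^{s}$ is smooth, $\mathrm{Cl}(\bM_2)=\mathrm{Cl}(\bM_2^{s})=\mathrm{Pic}(\bM_2^{s})$, so $\rank\mathrm{Cl}(\bM_2)=3$; in particular $\rank\mathrm{Pic}(\bM_2)\le 3$.

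The remaining inequality, equivalently $\QQ$-factoriality, is the main obstacle, and it is genuinely delicate: transverse to $\Sigma$ the space $\bM_2$ has conifold singularities. Indeed the normal slice is $\Ext^1(\cO_{C_1},\cO_{C_2})\oplus\Ext^1(\cO_{C_2},\cO_{C_1})\cong\CC^2\oplus\CC^2$ with opposite weights of the residual $\CC^*$-stabilizer, whose good quotient is the three-dimensional quadric cone, and its two rulings are not $\QQ$-Cartier, so $\QQ$-factoriality does \emph{not} follow formally from smoothness of $\bM_2^{0^{+}}$. The point I would establish is that these local rulings do not glue to a global Weil divisor: the $\ZZ_2$ in $\PP^3\times\PP^3/\ZZ_2$ exchanges $C_1$ and $C_2$, hence swaps the two rulings, and since the rulings are negatives of one another in the local class group the induced monodromy acts by $-1$ and has no nonzero invariant. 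Thus every global Weil divisor is $\QQ$-Cartier, $\bM_2$ is $\QQ$-factorial, and $\rank\mathrm{Pic}(\bM_2)=\rank\mathrm{Cl}(\bM_2)=3$. (Alternatively, exhibiting three independent determinant line bundles on $\bM_2$ would simultaneously give $\rank\mathrm{Pic}(\bM_2)\ge 3$ and $\QQ$-factoriality.)
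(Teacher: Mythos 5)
Your computation of $\mathrm{rank}\,\mathrm{Cl}(\bM_2)=3$ follows the paper's route almost exactly: $\mathrm{rank}\,\mathrm{Pic}(\mathrm{Hilb}^2 Q)=3$, hence $\mathrm{rank}\,\mathrm{Pic}(\bM_2^\infty)=4$ via the $\PP^6$-bundle of Lemma \ref{lemm:extremalstab}, transfer to $\bM_2^{0^+}$ across the codimension-$\ge 2$ wall, and descent through the algebraic fiber space $\phi$ after removing the codimension-$2$ locus $\phi^{-1}(\bM_2\setminus\bM_2^s)$, giving $\mathrm{rank}\,\mathrm{Pic}(\bM_2^s)=3$ and hence $\mathrm{rank}\,\mathrm{Cl}(\bM_2)=3$ and $\mathrm{rank}\,\mathrm{Pic}(\bM_2)\le 3$. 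This half is fine. Where you diverge is the complementary bound. The paper does not attempt any local analysis of the singularities: it simply quotes the forthcoming Theorem \ref{mainthm1}, by which $\Psi:\bM_2\to\bR$ is a blow up of $\bR$ at two smooth points, so the two exceptional divisors together with $\Psi^*$ of an ample class give $\mathrm{rank}\,\mathrm{Pic}(\bM_2)\ge 3$; combined with $\mathrm{Pic}(\bM_2)\otimes\QQ\hookrightarrow\mathrm{Cl}(\bM_2)\otimes\QQ$ and the equality of ranks, $\QQ$-factoriality follows. Your parenthetical alternative (exhibit three independent line bundles) is essentially this argument in disguise, and would be the cleanest repair.

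Your main line of attack on $\QQ$-factoriality, however, has a genuine gap. The Luna-slice description of the transverse conifold singularity, and the observation that the residual $\ZZ_2$ swaps $\Ext^1(\cO_{C_1},\cO_{C_2})$ with $\Ext^1(\cO_{C_2},\cO_{C_1})$ and hence acts by $-1$ on the local class group $\ZZ$ of the cone over $\PP^1\times\PP^1$, are correct --- but they only cover the open stratum of the strictly semistable locus where $C_1\ne C_2$ and the stabilizer is $\CC^*$. Along the deeper stratum $D\cong\PP^3$, where $\mathrm{gr}(F)=\cO_C\oplus\cO_C$ and the stabilizer jumps to $\PGL_2$, the local model is a quotient of $\Ext^1(F,F)$ by a nonabelian reductive group and is not a product of $\Sigma$ with the quadric cone; your monodromy argument says nothing there. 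Being $\QQ$-Cartier is not a condition that can be verified on the complement of a closed subset, whatever its codimension (the affine cone over $\PP^2\times\PP^2$ is smooth away from its vertex yet fails to be $\QQ$-factorial at it), so establishing that every Weil divisor is $\QQ$-Cartier on $\bM_2\setminus D$ does not finish the proof. In addition, the step from ``the monodromy on the local class groups has no invariants'' to ``every global Weil divisor is locally $\QQ$-principal along $\Sigma\setminus D$'' needs the local class groups of the henselizations to form a local system on which a global divisor induces a flat section; this is true in the present setting but is asserted rather than proved. Either supply the analysis at $D$ and the local-system argument, or replace this entire paragraph by the global lower bound $\mathrm{rank}\,\mathrm{Pic}(\bM_2)\ge 3$ coming from Theorem \ref{mainthm1} (or from three independent determinant line bundles), as the paper does.
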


\begin{proof}
In Theorem \ref{mainthm1} we will show that $\mathrm{Pic}(\bM_{2})$ is a blow up of $\bR$ at two smooth points. Since $\mathrm{rank}\;\mathrm{Pic}(\bR) \ge 1$, $\mathrm{rank} \;\mathrm{Pic}(\bM_{2}) \ge 3$.

On the other hand, for a smooth projective surface $X$ with $q(X) = 0$, it is well-known that $\mathrm{rank}\;\mathrm{Pic}(\mathrm{Hilb}^{n}(X)) = \mathrm{rank}\;\mathrm{Pic}(X) + 1$ (\cite{Fo73}). So $\mathrm{rank}\;\mathrm{Pic}(\mathrm{Hilb}^2(Q))=3$. Then by Item (1) of Proposition \ref{wall}, $\mathrm{rank}\;\mathrm{Pic}(\bM_{2}^{\infty})=4$. Note that $\bM_{2}^{\infty}$ is isomorphic to $\bM_{2}^{0^{+}}$ up to codimension $2$. So $\mathrm{rank}\;\mathrm{Pic}(\bM_{2}^{0^{+}}) = 4$.

By Proposition \ref{prop:fiberofphi}, the forgetful map $\phi : \bM_{2}^{0^{+}} \to \bM_{2}$ is a dominant morphism between two normal varieties with connected fibers. Thus $\phi$ is an algebraic fiber space. The same is true for its restriction to $\bM_{2}^{0^{+}}|_{\phi^{-1}(\bM_{2}^{s})} \to \bM_{2}^{s}$. Thus $\mathrm{Pic}(\bM_{2}^{s})$ is embedded into $\mathrm{Pic}(\bM_{2}^{0^{+}}|_{\phi^{-1}(\bM_{2}^{s})})$. Moreover, since there is a relative ample divisor on $\bM_{2}^{0^{+}}|_{\phi^{-1}(\bM_{2}^{s})}$ which is not a pull-back of a divisor on $\bM_{2}^{s}$, $\mathrm{rank} \;\mathrm{Pic}(\bM_{2}^{0^{+}}|_{\phi^{-1}(\bM_{2}^{s})}) > \mathrm{rank}\; \mathrm{Pic}(\bM_{2}^{s})$. From the dimension-counting using the explicit description of the base and fibers in Items (2) and (3) of the same proposition, $\mathrm{codim}_{\bM_2}(\bM_2^s)^{c}\geq 2$ and $\mathrm{codim}_{\bM_2^{0^{+}}}\phi^{-1}(\bM_2^s)^c\geq 2$. Thus $\mathrm{rank}\;\mathrm{Pic}(\bM_2^{0^{+}}|_{\phi^{-1}(\bM_2^s)}) = \mathrm{rank}\;\mathrm{Pic}(\bM_{2}^{0^{+}})$ and $\mathrm{rank}\;\mathrm{Cl}(\bM_2^s) =  \mathrm{rank}\;\mathrm{Cl}(\bM_2)$.

In summary,
\[
	3 \le \mathrm{rank}\; \mathrm{Pic}(\bM_{2}) \le
	\mathrm{rank}\; \mathrm{Cl}(\bM_{2}) =
	\mathrm{rank}\; \mathrm{Cl}(\bM_{2}^{s}) =
	\mathrm{rank}\; \mathrm{Pic}(\bM_{2}^{s})
\]
\[
	\le \mathrm{rank}\;
	\mathrm{Pic}(\bM_{2}^{0^{+}}|_{\phi^{-1}(\bM_{2}^{s})}) - 1
	= \mathrm{rank}\; \mathrm{Pic}(\bM_{2}^{0^{+}}) - 1 = 3.
\]
Therefore $\rank\;\mathrm{Pic}(\bM_{2}) = 3$ and $\bM_{2}$ is $\QQ$-factorial.
\end{proof}

For a variety $X$, the \emph{virtual Poincar\'e polynomial} of $X$ is defined by
\[
	P(X)=\sum (-1)^{i+j} \dim_{\QQ} \mathrm{gr}_{W}^{j}
	H_{c}^{i}(X,\QQ) p^{i/2},
\]
where $\mathrm{gr}_{W}^{j}H_{c}^{i}(X, \QQ)$ is the $j$-th weight-graded piece of the mixed Hodge structure on the $i$-th cohomology of $X$ with compact supports. Since odd cohomology groups of moduli spaces of our interest always vanish, their virtual Poincar\'e polynomial is a \emph{polynomial} indeed. For the motivic properties of the virtual Poincar\'e polynomial, see \cite[Section 2]{Mun08}.

The classification of stable pairs in Proposition \ref{prop:fiberofphi} enables us to compute the virtual Poincar\'e polynomial of $\bM_2$.
\begin{coro}\label{virpoinm2}
The virtual Poincar\'e polynomial of $\bM_2$ is given by
\[
	P(\bM_2)=p^9+3p^8+4p^7+3p^6+3p^5+2p^4+3p^3+3p^2+3p+1.
\]
In particular, the virtual Euler number is $e(\bM_2)=26$.
\end{coro}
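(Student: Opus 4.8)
The plan is to exploit the motivic nature of the virtual Poincar\'e polynomial $P$: it is additive over decompositions into locally closed subvarieties, multiplicative along Zariski-locally trivial fibrations (so a $\PP^n$-bundle multiplies $P$ of the base by $1+p+\cdots+p^n$), and it obeys the usual blow-up and symmetric-product formulas; see \cite[Section 2]{Mun08}. I would transport $P$ along the chain relating $\bM_2^\infty$, $\bM_2^{0^+}$ and $\bM_2$ furnished by Lemma \ref{lemm:extremalstab}, Proposition \ref{wall} and Proposition \ref{prop:fiberofphi}, using the stratification $\bM_2 = \bM_2^s \sqcup (\bM_2\setminus\bM_2^s)$ with $\bM_2\setminus\bM_2^s \cong \Sym^2\PP^3$ from Corollary \ref{cor:sslocus} (here $\PP^3\times\PP^3/\ZZ_2 = \Sym^2\PP^3$).

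First I would compute $P(\bM_2^\infty)$. By Lemma \ref{lemm:extremalstab}(1), $\bM_2^\infty$ is a $\PP^6$-bundle over $\mathrm{Hilb}^2(Q)$, so $P(\bM_2^\infty) = (1+p+\cdots+p^6)\,P(\mathrm{Hilb}^2(Q))$. Since $Q=\PP^1\times\PP^1$ and $\mathrm{Hilb}^2(Q)$ is the blow-up of $\Sym^2 Q$ along the diagonal, the blow-up formula together with the symmetric-square identity $P(\Sym^2 X)(p)=\tfrac12\big(P(X)(p)^2+P(X)(p^2)\big)$ (the second term being $P(X)$ with $p$ replaced by $p^2$) gives $P(\mathrm{Hilb}^2(Q))$ explicitly. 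Next, Proposition \ref{wall} presents $\bM_2^\infty$ and $\bM_2^{0^+}$ as a flip: over a common center $B$ the former carries the $\PP^2$-bundle $\PP\Ext^1((1,\cO_C),(0,\cO_L))$ and the latter the $\PP^1$-bundle $\PP\Ext^1((0,\cO_L),(1,\cO_C))$, while the two spaces coincide away from these loci. Hence $P(\bM_2^{0^+}) = P(\bM_2^\infty)-\big((1+p+p^2)-(1+p)\big)P(B) = P(\bM_2^\infty)-p^2\,P(B)$. Here $B$ splits into two disjoint components distinguished by the classes $(2,1),(0,1)$ and $(1,2),(1,0)$; invoking the classification of the relevant sheaves, each component is $|\cO_Q(2,1)|\times|\cO_Q(0,1)|\cong\PP^5\times\PP^1$, so $P(B)=2\,P(\PP^5\times\PP^1)$.

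I would then descend to $\bM_2$ through $\phi$. Decomposing $P(\bM_2^{0^+})$ over the three strata of Proposition \ref{prop:fiberofphi}: over $\bM_2^s$ the map $\phi$ is a $\PP^1$-bundle, contributing $(1+p)P(\bM_2^s)$; over the diagonal $D\cong\PP^3$ it is a $\PP^2$-bundle, contributing $(1+p+p^2)P(\PP^3)$; and over $\Sym^2\PP^3\setminus D$ the fibre is $\PP^2\cup_{p=q}\PP^2$. Equating this sum with the value of $P(\bM_2^{0^+})$ obtained above yields a single linear equation for $P(\bM_2^s)$, and then $P(\bM_2)=P(\bM_2^s)+P(\Sym^2\PP^3)$ produces the stated polynomial; setting $p=1$ gives $e(\bM_2)=26$.

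The hard part will be the stratum over $\Sym^2\PP^3\setminus D$. The naive guess $P(\Sym^2\PP^3\setminus D)\cdot P(\PP^2\cup_{p=q}\PP^2)$ is \emph{wrong}, because the $\ZZ_2$ that identifies the unordered pair $\{C_1,C_2\}$ simultaneously interchanges the two $\PP^2$ components of the fibre, so the family is not a product even motivically. I would instead pull back to the ordered double cover $\PP^3\times\PP^3\setminus\Delta\to\Sym^2\PP^3\setminus D$, over which the extension family is an honest $\PP^2$-bundle, and then collapse the (doubly covered) section of split pairs onto its image in the base. By additivity this contributes $(1+p+p^2)P(\PP^3\times\PP^3\setminus\Delta)-P(\PP^3\times\PP^3\setminus\Delta)+P(\Sym^2\PP^3\setminus D)$, every term being expressible through $P(\PP^3)$. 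A convenient internal check is that $\bM_2^{0^+}$ is smooth and projective, so $P(\bM_2^{0^+})$ must be palindromic of degree $10$; this pins down any bookkeeping error in $B$ and in the monodromy correction, and in particular forces the two computations of $P(\bM_2^{0^+})$ (via the flip and via $\phi$) to agree.
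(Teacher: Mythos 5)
Your proposal follows the same route as the paper's proof: compute $P(\bM_2^{\infty})=P(\mathrm{Hilb}^2(Q))\cdot P(\PP^6)$, adjust by $2\big(P(\PP^1)-P(\PP^2)\big)P(\PP^5)P(\PP^1)$ across the flip at $\alpha=2$ to get $P(\bM_2^{0^+})=1+4p+8p^2+9p^3+10p^4+10p^5+10p^6+9p^7+8p^8+4p^9+p^{10}$, then equate this with the sum of the contributions of the three strata of $\phi$ and solve for $P(\bM_2^s)$. The one point where you depart from the paper's written formulas is precisely the point that matters. The paper's displayed identity takes the contribution of the stratum over $\mathrm{Sym}^2\PP^3\setminus D$ to be the naive product $(2P(\PP^2)-1)\big(P(\mathrm{Sym}^2\PP^3)-P(\PP^3)\big)$, whereas, as you note, the $\ZZ_2$ interchanging $C_1$ and $C_2$ also interchanges the two $\PP^2$ components of the fibre; your corrected contribution $(P(\PP^2)-1)\,P(\PP^3\times\PP^3\setminus\Delta)+P(\mathrm{Sym}^2\PP^3\setminus D)$ differs from the naive product as a polynomial (they agree only at $p=1$), and it is \emph{your} version that reproduces the stated answer $p^9+3p^8+4p^7+3p^6+3p^5+2p^4+3p^3+3p^2+3p+1$, while the naive product would yield the reversed polynomial $p^9+3p^8+3p^7+3p^6+2p^5+3p^4+3p^3+4p^2+3p+1$. (The paper's prose about passing to the \'etale double cover and adding back the section shows the authors computed as you propose; only their displayed formula is garbled.) The sole small caveat in your sketch is the appeal to the blow-up formula for $\mathrm{Hilb}^2(Q)\to\mathrm{Sym}^2 Q$: the centre is the singular locus of $\mathrm{Sym}^2Q$, so one should instead use additivity along the Hilbert--Chow map (a $\PP^1$-fibration over the diagonal, an isomorphism elsewhere), giving $P(\mathrm{Hilb}^2(Q))=P(\mathrm{Sym}^2 Q)+p\,P(Q)=1+3p+6p^2+3p^3+p^4$; the analogous expression printed in the paper is likewise off, though harmless to the final result.
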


\begin{proof}
We may assume that all fibration structures in Proposition \ref{prop:fiberofphi} are Zariski locally trivial. Indeed, for the strata $\bM_{2}^{s}$ and $\PP^{3} \subset \bM_{2}$ in (1) and (3) of Proposition \ref{prop:fiberofphi}, they are projective space fibrations. Therefore we can apply \cite[p.21 (6)]{Mun08}. For $\PP^{3} \times \PP^{3}/\ZZ_{2}\setminus D$, if we remove the section $p$, we obtain $(\PP^{2}\setminus\{p\}) \sqcup (\PP^{2}\setminus\{q\})$-fibration. By taking a degree 2 \'etale cover $\PP^{3} \times \PP^{3} \setminus D$ of $\PP^{3} \times \PP^{3}/\ZZ_{2} \setminus D$, we obtain two disjoint $(\PP^{2}\setminus \mathrm{pt})$-fibrations. By adding two sections, we obtain two $\PP^{2}$-fibrations. By \cite[p.21 (6)]{Mun08} again, the computational result is the same as in the case of Zariski bundles.

By Proposition \ref{wall},
\begin{align*}
P(\bM_2^{0^{+}})&=P(\bM_2^{\infty})+2(P(\PP^1)-P(\PP^2))P(\PP^5)P(\PP^1)  \\
&=P(\mathrm{Hilb}^{2}(Q))\cdot P(\PP^6)+2(P(\PP^1)-P(\PP^2))P(\PP^5)P(\PP^1).
\end{align*}
But $P(\mathrm{Hilb}^{2}(Q))=\frac{1}{2}(P(Q)^2-P(Q))+P(Q)\cdot P(\PP^1)$. Hence
\[
	P(\bM_2^{0^{+}})=
	p^{10}+4p^9+8p^8+9p^7+10p^6+10p^5+10p^4+9p^3+8p^2+4p+1.
\]
On the other hand, by the proof of Proposition \ref{prop:fiberofphi}, we have
\[
	P(\bM_2^{0^{+}})=P(\PP^1)P(\bM_2^s)+ (2P(\PP^2)-1)
	(P(\mathrm{Sym}^2\PP^3)\textendash
	P(\PP^3))+P(\PP^2)\cdot P(\PP^3)
\]
and thus obtain $P(\bM_2^s)$. Finally, $P(\bM_2)=P(\bM_2^s)+P(\mathrm{Sym}^2\PP^3)$.
\end{proof}

\subsection{Free resolution of elements of $\bM_{2}$}

Another application of the wall crossing analysis is the classification of free resolutions of elements in $\bM_{2}$. We will use the result of this section in Section \ref{sec:M2andR}.

\begin{prop}\label{resol}
Let $F\in \bM_2$. Then $F$ has one of the following free resolutions:
\begin{enumerate}
\item $\ses{2\cO_{Q}(-1,-1)}{2\cO_{Q}}{F}$;
\item $\ses{\cO_{Q}(-1,-2)}{\cO_{Q}(1,0)}{F}$;
\item $\ses{\cO_{Q}(-2,-1)}{\cO_{Q}(0,1)}{F}$.
\end{enumerate}
\end{prop}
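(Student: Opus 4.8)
The plan is to read off each resolution from the cohomology of $F$ twisted by the line bundles $\cO_Q(-1,0)$, $\cO_Q(0,-1)$ and $\cO_Q(-1,-1)$, and to split into the three cases according to whether $\Hom(\cO_Q(1,0),F)$ or $\Hom(\cO_Q(0,1),F)$ is nonzero. Throughout I use that $F$ is pure of dimension one with support class $(2,2)$, multiplicity $4$ and reduced Hilbert polynomial $m+\tfrac12$, and that for any line bundle $\cO_Q(a,b)$ one has $\chi(F(a,b))=2+2(a+b)$. First I would extract the needed vanishings from semistability. A nonzero map $\phi:\cO_Q(a,b)\to F$ has image a pure one-dimensional quotient of a line bundle, hence of the form $\cO_{C'}(a,b)$ for an effective subdivisor $C'\subset \mathrm{Supp}(F)$ of some class $(a',b')$ with $0\le a',b'\le 2$. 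Using $\chi(\cO_{C'})=1-(a'-1)(b'-1)$ I would compute the reduced Hilbert polynomial of this image and compare it with $p(F)=m+\tfrac12$. A short case analysis over the finitely many subclasses $(a',b')$ then yields: (i) $\Hom(\cO_Q(1,1),F)=H^0(F(-1,-1))=0$ for every $F$; and (ii) a nonzero element of $\Hom(\cO_Q(1,0),F)$ can only have image of class $(2,2)$, and equality of both multiplicity and $\chi$ then forces that inclusion to be an isomorphism, so $F\cong\cO_C(1,0)$ for a divisor $C$ of class $(2,2)$, and symmetrically for $\Hom(\cO_Q(0,1),F)$. Applying the same comparison to the dual $F^D=\cExt_Q^1(F,\omega_Q)$, which is again semistable with reduced Hilbert polynomial $m-\tfrac12$, rules out any nonzero map $\cO_Q\to F^D$, so $H^0(F^D)=0$; since $\cHom_Q(F,\omega_Q)=0$, Serre duality gives $H^1(F)^\vee\cong\Ext_Q^1(F,\omega_Q)\cong H^0(F^D)=0$, hence $h^0(F)=\chi(F)=2$ for all $F\in\bM_2$.

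Next I would dispose of cases (2) and (3). If $\Hom(\cO_Q(1,0),F)\neq 0$, then by the above $F\cong\cO_C(1,0)=\cO_Q(1,0)|_C$ for an effective divisor $C$ of class $(2,2)$; tensoring the structure sequence $\ses{\cO_Q(-2,-2)}{\cO_Q}{\cO_C}$ by $\cO_Q(1,0)$ produces exactly resolution (2). The case $\Hom(\cO_Q(0,1),F)\neq 0$ gives resolution (3) by the identical argument with the two rulings exchanged.

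Finally, assume $\Hom(\cO_Q(1,0),F)=\Hom(\cO_Q(0,1),F)=0$, i.e.\ $H^0(F(-1,0))=H^0(F(0,-1))=0$; I claim $F$ is of type (1). Since $\chi(F(-1,0))=\chi(F(0,-1))=0$ and these sheaves have vanishing $H^0$ and $H^2$, we get $H^1(F(-1,0))=H^1(F(0,-1))=0$; and since $\chi(F(-1,-1))=-2$ with $H^0(F(-1,-1))=0$ we get $H^1(F(-1,-1))\cong\CC^2$. Feeding these into the Beilinson spectral sequence on $Q=\PP^1\times\PP^1$ coming from the Koszul resolution of the diagonal, the only surviving entries of the $E_1$-page are $E_1^{0,0}=H^0(F)\otimes\cO_Q=2\cO_Q$ and $E_1^{-2,1}=H^1(F(-1,-1))\otimes\cO_Q(-1,-1)=2\cO_Q(-1,-1)$. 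Convergence to $F$ placed in degree $0$ forces the unique nontrivial differential $d_2:2\cO_Q(-1,-1)\to 2\cO_Q$ to be injective with cokernel $F$, which is precisely resolution (1).

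The main obstacle is the bookkeeping in the semistability step: one must correctly identify the image of a map out of a line bundle as $\cO_{C'}(a,b)$ and carry out the comparison of reduced Hilbert polynomials uniformly over possibly reducible or non-reduced support curves, which is where all the geometric input really enters. Once those vanishings are established, cases (2) and (3) are immediate and the Beilinson degeneration in case (1) is purely formal.
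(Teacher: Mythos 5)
Your proof is correct, but it takes a genuinely different route from the paper's. The paper deduces the classification from the stable-pair analysis of Section 3: off the wall-crossing locus a $0^+$-stable pair has underlying sheaf $I_{\{p,q\},C}^{D}$, and the three resolution types are read off from whether the line $\langle p,q\rangle$ is a ruling of $Q$ (via a mapping-cone computation and dualizing), while sheaves underlying pairs on the wall-crossing locus are handled by a horseshoe-lemma argument in the strictly semistable case and by deforming the section off the wall in the stable case. You instead argue intrinsically: semistability forces $\Hom(\cO_Q(1,1),F)=0$ and shows that a nonzero map $\cO_Q(1,0)\to F$ (or $\cO_Q(0,1)\to F$) must be an isomorphism onto $F$, giving types (2) and (3) by twisting the structure sequence of the support; the dual-sheaf argument gives $H^1(F)=0$ and $h^0(F)=2$; and the Beilinson spectral sequence on $\PP^1\times\PP^1$ then yields type (1) when both Hom spaces vanish. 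Your numerical case analysis checks out (for $\Hom(\cO_Q(1,0),F)$ only the class $(2,2)$ survives, with equality of reduced Hilbert polynomials and then of multiplicity and Euler characteristic), and the spectral-sequence degeneration is as you describe. What your approach buys is self-containedness: it does not rely on the surjectivity of the forgetful map $\phi:\bM_2^{0^+}\to\bM_2$ (implicit in the paper's proof) nor on the identification of non-wall pairs with dual ideal sheaves, and it independently establishes $h^0(F)=2$ for every $F\in\bM_2$, a fact the paper quotes elsewhere. What the paper's approach buys is geometric information: it exhibits which sheaves fall into each case in terms of the pair $(\{p,q\},C)$ and the rulings of $Q$, which feeds directly into the description of the divisors $D_{1,0}$, $D_{0,1}$ used later.
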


\begin{proof}
We describe all possible resolutions of underlying sheaves, for stable pairs $(s, F) \in \bM_2^{0^{+}}$. If a pair $(s, F)$ does not belong to the wall crossing locus in $\bM_2^{0^{+}}$, then $F\cong I_{\{p,q\},C}^D$ for some $p ,q \in C$ (\cite[Proposition B.8]{PT10} and \cite[\S4.4]{He98}). Here $D$ denotes the dual $G^D:=\cExt_Q^1(G, \omega_{Q})$. If the line $L=\langle p,q\rangle$ generated by $p$ and $q$ is not one of two rulings of $Q$, then $\{p,q\}$ is a complete intersection of two conics in $Q$. Hence we have a short exact sequence $\ses{\cO_{Q}(-2,-2)}{2\cO_{Q}(-1,-1)}{I_{\{p,q\},Q}}$. By performing the mapping cone operation on $\ses{I_{C,Q}\cong \cO_{Q}(-2,-2)}{I_{\{p,q\},Q}}{I_{\{p,q\},C}}$, we obtain a free resolution
\[
	\ses{2\cO_{Q}(-2,-2)}{2\cO_{Q}(-1,-1)}{I_{\{p,q\},C}}.
\]
By taking $\cHom_{Q}(-, \omega_{Q})$, we can obtain the case (1). If the line $L=\langle p,q\rangle$ is a ruling of $Q$, then $I_{\{p,q\},C}\cong \cO_C(-1,0)$ or $\cO_C(0,-1)$. By taking $\cHom_{Q}(-, \omega_{Q})$ to the resolution of $\cO_{C}(-1,0)$ or $\cO_{C}(0,-1)$, we get the cases (2) or (3).

Now assume that the $0^+$-stable pair $(1,F)$ is contained in the wall crossing locus. Then $(1,F)$ fits into the short exact sequence
\[
	\ses{(1,\cO_C)}{(1,F)}{(0,\cO_L)}
\]
for a cubic curve $C$ and a line $L$ in $Q$. If $F$ is strictly semistable, then $\mathrm{gr}(F)=\cO_{C_1}\oplus \cO_{C_2}$ for two conics $C_i$, and by the horseshoe lemma type argument, $F$ has the resolution of the form (1). Now assume that $F$ is stable. Then every pair $(s,F)$ with a nonzero section $s\in H^0(F)=\CC^2$ is $0^+$-stable. Also, the pair $(1,F)$ in our consideration is exactly given by the unique (up to scalar) nonzero section $1 \in H^0(\cO_C)=\CC\subset H^0(F)$. We can construct a flat family of pairs such that the underlying sheaf is constant but the section does vary. Let
\[
	\ses{(0,\cO_C)}{(s_t,F)}{(s_t,\cO_L)}
\]
be the non-split extension such that $\mathrm{lim}_{t\lr 0}(s_t, F)=(1,F)$ for $s_t\in H^0(F)\setminus H^0(\cO_C)$. Then obviously $(s_t,F)$ for all $t\neq 0$ is contained in the complement of the wall crossing locus. Hence, as we discussed before, the possible resolutions of the underlying sheaf $F$ are given by the list. This proves the claim.
\end{proof}

\begin{defi}\label{def:Ddivisor}
Let $D_{1,0}$ (resp. $D_{0,1}$) be the locus of semistable sheaves $F\in \bM_2$ having the resolution of type (2) (resp. type (3)) of Proposition \ref{resol}. Both of them are isomorphic to the complete linear system $|\cO_Q(2,2)|\cong \PP^8$ because the isomorphism class of sheaves is completely determined by the support. Note that $D_{1,0}\cup D_{0,1}\subset \bM_2^{s}$.
\end{defi}

\begin{rema}
By using the language of derived categories, a point in $D_{1,0}$ or $D_{0,1}$ can be described by an extension class. If $F\in D_{1,0}$ (similarly for $D_{0,1}$), then $F$ fits into an exact triangle
\[
	\ses{\cO_{Q}(1,0)}{F}{\cO_{Q}(-1,-2)[1]}.
\]
Conversely, for every nontrivial triangle of such form, $F$ is in the locus $D_{1,0}$ because $\Ext^1(\cO_{Q}(-1,-2)[1],\cO_{Q}(1,0)) =  \Ext^0(\cO_{Q}(-1,-2),\cO_{Q}(1,0))=H^0(\cO_{Q}(2,2))=\CC^9$. These objects form one of the walls in the sense of Bridgeland. After the Bridgeland wall crossing, we have a unique nontrivial triangle
\[
	\ses{\cO_{Q}(-1,-2)[1]}{G}{\cO_{Q}(1,0)}
\]
because $\Ext^1(\cO_{Q}(1,0),\cO_{Q}(-1,-2)[1])\cong\Ext^2(\cO_{Q}(1,0),\cO_{Q}(-1,-2))=\CC$. Thus the wall crossing is a (divisorial) contraction. Compare with the $\PP^2$ case in \cite[\S 6]{BMW14} and \cite[\S 6]{Mar13}. In Section \ref{sec:M2andR}, we will describe the contraction in terms of FM transforms (\cite[\S 4]{LP93a}).
\end{rema}

%%%%%%%%%%%%%%%%%%%%%%%%%%%%%%%%%%%%%

\section{Rationality of $\bM_{2}$}\label{sec:rationality}

In this short section, we answer the rationality question. Later in Sections \ref{sec:M2andR} and \ref{sec:RandK}, we will provide a more theoretical description of the birational map. Here we leave a very elementary proof of the rationality of $\bM_{2}$ for readers who are interested in the description of the birational map in classical terms.

\begin{prop}\label{birationalKM2}
Two spaces $\bK$ and $\bM_2$ are birationally equivalent.
\end{prop}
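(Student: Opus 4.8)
The plan is to exhibit an explicit common open dense subset of $\bK$ and $\bM_2$, following the sketch already given in the introduction, and to upgrade that set-theoretic dictionary to a genuine birational equivalence. First I would fix the general point of $\bM_2$: by Proposition \ref{resol}, a general $F \in \bM_2$ has resolution of type (1), and geometrically it is a line bundle $\cO_C(p+q)$ of degree $2$ on a smooth $(2,2)$-curve $C \subset Q$. Since $C$ has genus $1$ and lies on $Q \subset \PP^3$ as a smooth elliptic quartic, it is the complete intersection of $Q$ with a second smooth quadric $Q_1$. I would then use the two points $p, q$ and the line $\ell = \langle p, q\rangle$ they span in $\PP^3$: the condition $\ell \subset Q_1$ cuts out a unique member of the pencil of quadrics through $C$, so from $(C, p, q)$ one recovers the datum $(Q_1, \ell)$, a ``regulus'' as in \cite{NT90}. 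This is exactly the general point of $\bR$; composing with the birational map $\bK \to \bR$ of Theorem \ref{thm:pardesingintro}, I obtain the comparison with $\bK$.

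The more conceptual route, which I would prefer to write, bypasses $\bR$ and relates $\bK$ to $\bM_2$ directly via the ruling construction. From a general regulus $(Q', \ell)$ one obtains a one-parameter family of lines in the ruling class $\ell$ on $Q' \cong \PP^1 \times \PP^1$, that is, a map $\PP^1 \to Gr(2,4)$ of degree $2$, hence a point of $\bK$. Conversely, a general stable map $[f] \in \bK$ has smooth rational source and image a smooth conic in $Gr(2,4)$; the union of the lines parametrized sweeps out a smooth quadric surface $Q'$ in $\PP^3$ together with a distinguished ruling, recovering the regulus and thus, by intersecting $Q'$ with $Q$, the elliptic quartic $C$ and a degree-$2$ line bundle on it. I would package these mutually inverse assignments as rational maps and check they are defined on dense opens; concretely I would verify that both assignments extend to morphisms over a suitable common open locus using the universal families, so that the composite is the identity on an open dense subset of each space.

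The key steps, in order, are: (i) identify the general point of $\bM_2$ as a degree-$2$ line bundle on a smooth elliptic quartic $C = Q \cap Q_1$, using Proposition \ref{resol}; (ii) produce from the pair $(p,q)$ the regulus $(Q_1, \langle p,q\rangle)$ and check this is well-defined and generically bijective onto an open set of reguli; (iii) convert a regulus into a degree-$2$ map $\PP^1 \to Gr(2,4)$ by sweeping out the chosen ruling, landing in $\bK$; and (iv) argue that the resulting rational map $\bM_2 \dashrightarrow \bK$ is birational by exhibiting an inverse on dense opens, or equivalently by a dimension count together with generic injectivity (both spaces being irreducible of dimension $9$, a dominant generically injective rational map between them is birational).

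The main obstacle I anticipate is step (ii): verifying that the assignment $[\cO_C(p+q)] \mapsto (Q_1, \langle p,q\rangle)$ is genuinely well-defined and invertible on a dense open, rather than merely a set-theoretic correspondence on very general points. One must check that for general $C$ the line $\langle p, q\rangle$ lies on a \emph{unique} quadric of the pencil through $C$ (so that $Q_1$ and the ruling class are unambiguous), and conversely that a general regulus recovers the unordered pair $\{p,q\}$ as the intersection of $Q$ with the relevant line of the ruling. Since the correspondence involves the unordered pair $\{p,q\}$ and a choice of ruling, I would be careful to track the $\ZZ_2$-symmetries and ensure the maps match generically; once this dictionary is pinned down on a common open set, birationality follows formally, and I would remark that this argument also reproves the rationality of $\bM_2$ since the rationality of the space of conics in $Gr(2,4)$ is classical (\cite[Section 3.10]{NT90}, \cite{CC10}).
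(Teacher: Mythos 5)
Your preferred route is essentially the paper's own proof: the paper works with the one-pointed space $\bK_{1}$, sends a general pointed conic in $Gr(2,4)$ to a regulus $(Q',\ell)$, then to $\cO_{C}(p+q)$ with $C=Q'\cap Q$ and $\{p,q\}=\ell\cap Q$, checks independence of the marked point via the constancy of $\PP^{1}\to\mathrm{Pic}^{2}(C)$, and proves generic injectivity using the pencil of quadrics through $C$ together with the resolution types of Proposition \ref{resol}, concluding by the same dimension-plus-irreducibility argument you propose in step (iv). Your anticipated obstacle (ii) is exactly the point the paper settles in its injectivity argument, so the proposal is correct and matches the paper's approach.
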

\begin{proof}
It is sufficient to construct an injective map on some Zariski dense open subset of the space $\bK$ since both varieties have the same dimension and are irreducible. We begin with $\bK_{1} := \bM_{0,1}(Gr(2, 4), 2L)$, the moduli space of 1-pointed stable maps. Let $\bK_{1}^{0}$ be the open subset parametrizing smooth 1-pointed conics in $Gr(2, 4)$ such that 1) it generates a smooth quadric in $\PP(V) = \PP^{3}$ and 2) the smooth quadric is not equal to $Q$. Let $\bK^{0} := \pi(\bK_{1}^{0})$ for the forgetful map $\pi$. For each stable map $[f: (\PP^{1}, x) \to Gr(2, 4)] \in \bK_{1}^{0}$, we have a pair $(Q', \ell)$ of a quadric surface in $\PP^{3}$ and a line $\ell \subset Q'$ obtained by a pointed conic in $Gr(2, 4)$. Clearly, it is well-defined on families.

Define $C:=Q'\cap Q$. Then $C$ is a quartic curve since $Q' \ne Q$. Let $\{p,q\}=\ell\cap Q$ be the set of intersection points. Then the line bundle $\cL:=\cO_C(p+q)$ on $C$ is a stable sheaf on $Q$ with Hilbert polynomial $4m+2$. Therefore we have a map $\psi : \bK_{1}^{0} \to \bM_{2}$.

A different choice of a marked point $x \in \PP^{1}$ defines a different line $\ell$ in the same ruling class. But the line bundle $\cL$ does not depend on the choice of $x$, because the natural map $\PP^{1} \to \mathrm{Pic}^{2}(C)$ is constant, since the image is a smooth elliptic curve. Therefore the map descends to a map $\bar{\psi} : \bK^{0} \to \bM_{2}$.

Now we show that $\bar{\psi}$ is injective. Let $F_1:=\cO_{C_1}(p_1+q_1)\cong \cO_{C_2}(p_2+q_2)=: F_2$ where $C_i=Q\cap Q_i$ and $\{p_i, q_i\}=Q\cap Q_i$ for $i=1,2$. Also let $\ell_{i} = \langle p_{i}, q_{i}\rangle$. Obviously $C_1=C_2$, so let $C := C_{1} = C_{2}$. Then $\langle Q,Q_1,Q_2\rangle \subset H^0(I_C(2))=\CC^2$. So the set $\{Q, Q_1, Q_2\}$ is linearly dependent. Without loss of generality, we may assume that
\begin{equation}\label{eqn:linearcombinationquadrics}
	Q_1= a Q_2+ b Q.
\end{equation}
Note that $a\ne 0$ because $Q_{1} \ne Q$. If $b\neq 0$, we have $\{p_1, q_1\}\subset Q_2$. If $\ell_1=\langle p_1,q_1\rangle\subset Q_2$, then $\ell_1\subset Q$ since $\ell_{1} \subset Q_{1}$ and \eqref{eqn:linearcombinationquadrics}. This is a contradiction to the fact that $\ell_1\cap Q =\{p_1,q_1\}$. So $\ell_1\cap Q_2 =\{p_1,q_1\}$ and $\ell_1$ is not a ruling of $Q_2$. Now, by using their different resolutions of the sheaves $F_i$ on $Q_{2}$ (see Proposition \ref{resol}), one can check that $F_1\ncong F_2$, which makes a contradiction.

Thus $b=0$ and $Q_1=Q_2$. By Proposition \ref{resol} again, if $\ell_{1}$ and $\ell_{2}$ are on the different ruling classes, then $F_{1}\ncong F_{2}$. Thus $\ell_{1}$ and $\ell_{2}$ are on the same ruling class and $(Q_{1}, \ell_{1})$ and $(Q_{2}, \ell_{2})$ come from the same stable map.
\end{proof}

As a consequence, we obtain the following result, which gives an affirmative answer for \cite[Conjecture 35]{BH14}.

\begin{coro}\label{cor:rationality}
The moduli space $\bM_{2}$ is rational.
\end{coro}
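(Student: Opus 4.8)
The plan is to deduce the rationality of $\bM_2$ directly from the birational equivalence established in Proposition \ref{birationalKM2}, together with the classical fact that $\bK$ is rational. Since rationality is a birational invariant, it suffices to exhibit rationality for \emph{any} variety birational to $\bM_2$; the natural choice is $\bK = \bM_{0,0}(Gr(2,4), 2L)$, the Kontsevich space of conics in the Grassmannian.

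First I would invoke Proposition \ref{birationalKM2}, which gives that $\bK$ and $\bM_2$ are birationally equivalent. Then the entire problem reduces to establishing that $\bK$ is a rational variety. For this step I would not reprove rationality from scratch but instead cite the known description of the space of conics in $Gr(2,4)$. As indicated in the introduction, the rationality of the parameter space of conics in $Gr(2,4)$ is classical and is recorded in \cite[Section 3.10]{NT90} and \cite{CC10}. Concretely, one can use the birational identification of $\bK$ with (a birational model of) the Hilbert scheme or Chow variety of conics, and ultimately with an open subset of $Gr(3,6)$ via the diagram in the introduction: a smooth conic in $Gr(2,4)$ spans a $\PP^2$ in the Pl\"ucker $\PP^5$, and the assignment of this spanning plane realizes the general conic as a point of $Gr(3,6)$, which is rational. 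Tracing this chain of birational morphisms gives a birational map $\bK \dashrightarrow Gr(3,6)$, and since Grassmannians are rational, $\bK$ is rational.

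Combining these two observations completes the proof: $\bM_2$ is birational to $\bK$, which is birational to the rational variety $Gr(3,6)$, hence $\bM_2$ is rational. This simultaneously answers \cite[Conjecture 35]{BH14} in the affirmative, as claimed.

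I do not anticipate a genuine obstacle here, since all the substantive work has already been carried out in Proposition \ref{birationalKM2} and the rationality of the space of conics is classical. The only point requiring mild care is to make sure the cited descriptions of $\bK$ genuinely refer to the \emph{normalization} of the relevant component and that the birational maps in the introductory diagram are indeed birational (rather than merely dominant or generically finite of higher degree). Granting the references, the corollary is then a one-line consequence of birational invariance of rationality, so the proof is essentially immediate.
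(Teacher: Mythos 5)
Your proposal matches the paper's argument exactly: it combines Proposition \ref{birationalKM2} with the classical birational identification of the space of conics in $Gr(2,4)$ with $Gr(3,6)$ via \cite[Section 3.10]{NT90}. The proof is correct and essentially identical to the one in the paper.
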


\begin{proof}
The moduli space $\bK$ is birational to $\bH := \mathrm{Hilb}^{2m+1}(Gr(2, 4))$, the Hilbert scheme of conics in $Gr(2, 4)$. The latter space is birational to $Gr(3, 6)$ (\cite[Section 3.10]{NT90}), which is a rational variety.
\end{proof}

%%%%%%%%%%%%%%%%%%%%%%%%%%%%%%%%%%%%%

\section{Birational map between $\bM_{2}$ and $\bR$ via Fourier-Mukai transforms}\label{sec:M2andR}

\subsection{The geometry of the moduli space $\bR$}
Recall that $\bR$ is the moduli space of semistable sheaves in $\PP^3$ with Hilbert polynomial $m^2+3m+2$. Here we collect well-known properties of $\bR$ intensively studied in \cite[Section 3]{LP93b}.

\begin{prop}\cite[Proposition 3.6, Remark 3.8]{LP93b}\label{propertyofr}
\begin{enumerate}
\item Every semistable sheaf $F\in \bR$ has a free resolution
\[
	\ses{2\cO_{\PP^{3}}(-1)}{2\cO_{\PP^{3}}}{F}.
\]
For the converse, see Lemma \ref{freeofr}.
\item
\begin{enumerate}
\item If $F\in \bR^s$, then $F\cong I_{L,S}(1)$ for an irreducible quadric surface $S$ and a line $L \subset S$.
\item If $F\in \bR\setminus \bR^s$, then $\mathrm{gr}(F)=\cO_H\oplus \cO_{H'}$ for planes $H$ and $H'$ in $\PP^3$.
\end{enumerate}
\item The map $\pi:\bR\longrightarrow |\cO_{\PP^{3}}(2)|\cong \PP^9$ defined by the Fitting ideal of $F$ is a double covering ramified along the discriminant divisor $\Delta$. Here, $\Delta$ denotes the locus of singular quadric surfaces in $\PP^3$, which is a degree 4 singular hypersurface.
\end{enumerate}
\end{prop}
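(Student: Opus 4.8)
The three assertions are bound together by the presentation of $F$ as the cokernel of a $2\times 2$ matrix of linear forms, so the plan is to establish (1) first and then extract (2) and (3) from the matrix model. For (1) I would argue cohomologically. The Hilbert polynomial gives $\chi(F)=2$, $\chi(F(-1))=0$, and the reduced Hilbert polynomial is $p(F)=\binom{m+2}{2}$. Since $F$ is a torsion sheaf, $\Hom(F,\cO_{\PP^3}(-4))=0$, so Serre duality kills $H^3(F)$ and $H^3(F(-1))$ at once. Purity and semistability kill $H^0(F(-1))$: a nonzero class there would give a subsheaf $\cO_{\PP^3}(1)\hookrightarrow F$ whose image is a $2$-dimensional quotient of $\cO_{\PP^3}(1)$, of reduced Hilbert polynomial strictly larger than $p(F)$ for $m\gg 0$, contradicting semistability; the groups $H^1(F)$ and $H^2(F)$ vanish by Serre duality together with the same semistability estimate. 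With $H^{>0}(F)=0$ one gets $h^0(F)=2$ and global generation, hence a surjection $2\cO_{\PP^3}\twoheadrightarrow F$; as $F$ is Cohen--Macaulay of codimension one it has projective dimension one, so the kernel is a rank-two bundle, and its Chern classes together with the vanishing $H^\bullet(F(-1))=0$ force it to be $2\cO_{\PP^3}(-1)$. Equivalently, one may feed the same vanishings into the Beilinson spectral sequence on $\PP^3=\PP(V)$ and read off the collapse.

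Granting the resolution, write $\phi\colon 2\cO_{\PP^3}(-1)\to 2\cO_{\PP^3}$, a $2\times 2$ matrix $M$ of linear forms, well defined up to the $\PGL_2\times\PGL_2$ action used to build $\bR$ as a GIT quotient. The zeroth Fitting ideal of $F=\coker\phi$ is the single minor $(\det M)$, a quadratic form, and this is exactly the map $\pi$; in particular the scheme-theoretic support of $F$ is the quadric $S=\{\det M=0\}$. For (2)(a), stability prevents $M$ from being block-triangular, so $S$ is irreducible; restricting the resolution to $S$ presents $F$ as a rank-one torsion-free sheaf with the numerics of $\cO_S(0,1)$, which on a smooth $S\cong\PP^1\times\PP^1$ is $I_{L,S}(1)$ for a ruling line $L$, and the description persists on an irreducible singular quadric. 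For (2)(b), strict semistability forces a destabilizing subsheaf of multiplicity one and reduced Hilbert polynomial $\binom{m+2}{2}$; the only pure such sheaf is $\cO_H$ for a plane $H$, and likewise for the quotient, giving $\mathrm{gr}(F)=\cO_H\oplus\cO_{H'}$.

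For (3) I would compute the degree of $\pi$ over a general, hence smooth, quadric $S$: by (2)(a) the fibre is the set of sheaves $I_{L,S}(1)$ as $L$ runs over the lines of $S$, and these form exactly the two rulings of $S\cong\PP^1\times\PP^1$, so $\pi$ is generically two-to-one. The two rulings degenerate precisely when $S$ becomes singular, and a local analysis of $\cO_S(1,0)$ and $\cO_S(0,1)$ as $S$ acquires a node shows that the two points of the fibre collide there; thus $\pi$ ramifies exactly along the discriminant $\Delta$, the vanishing of the determinant of the symmetric $4\times 4$ matrix of the quadric, a hypersurface of degree $4$ in $\PP^9$. Matching $\cO_{\PP^9}(\Delta)=\cO_{\PP^9}(4)$ with a square $\cO_{\PP^9}(2)^{\otimes 2}$ identifies $\bR$ with the double cover branched along $\Delta$.

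The hard part is (1): securing the \emph{single, uniform} resolution over all of $\bR$, including sheaves supported on non-reduced or singular quadrics and the strictly semistable locus. For smooth support every vanishing above is an immediate K\"unneth computation on $\PP^1\times\PP^1$, but the cohomological input ($H^\bullet(F(-1))=0$, global generation of $F$, and projective dimension one) must be verified without assuming $S$ is smooth; this uniformity is what makes the Fitting-ideal map $\pi$ globally defined and underlies both (2) and (3). Once the resolution is in hand in this generality, statements (2) and (3) are essentially bookkeeping in the $2\times 2$ matrix model.
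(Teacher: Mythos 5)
First, note that the paper does not prove this proposition at all: it is quoted from Le Potier (\cite[Proposition 3.6, Remark 3.8]{LP93b}), and the only thing the authors prove themselves is the converse direction (Lemma \ref{freeofr}). So there is no in-paper argument to compare routes with; what you have written is an attempt to reconstruct Le Potier's proof, and it has the right overall architecture (cohomology vanishing $\Rightarrow$ Beilinson-type resolution $\Rightarrow$ matrix model $\Rightarrow$ (2) and (3)), with (2) and (3) indeed reducing to bookkeeping once (1) is secured.

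The genuine gaps are all in (1), and you have not quite isolated them correctly. Your stability estimate does give $H^0(F(-1))=0$, and duality applied to $F^D=\cExt^1(F,\omega_{\PP^3})$ (which is again semistable of codimension one) gives $H^2(F)=H^2(F(-1))=0$, whence $H^1(F(-1))=0$ from $\chi(F(-1))=0$. But $H^1(F)=0$ does \emph{not} follow ``by Serre duality together with the same semistability estimate'': duality identifies $H^1(F)^*$ with $\Ext^2(F,\omega_{\PP^3})\cong H^1(F^D)$, i.e.\ with an $H^1$ of a semistable sheaf, to which no slope estimate applies directly; one needs an extra input (e.g.\ restriction to a general plane and Le Potier's $H^1$-vanishing for semistable one-dimensional sheaves of positive slope, or a case analysis as in \cite{FT04}). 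Second, global generation of $F$ by its two sections is asserted, not proved; $h^0(F)=2$ and $H^{>0}(F)=0$ do not imply it, and it must be checked uniformly, including for non-split extensions of $\cO_H$ by $\cO_{H'}$ and for sheaves on double planes. Third, even granting a surjection $2\cO_{\PP^3}\twoheadrightarrow F$ with locally free kernel $K$, the conclusion $K\cong 2\cO_{\PP^3}(-1)$ does not follow from ``Chern classes together with $H^\bullet(F(-1))=0$'': these give $\mathrm{ch}(K)=\mathrm{ch}(2\cO(-1))$ and $H^\bullet(K(-1))=0$, which is not yet a splitting criterion; one must still rule out non-split rank-two bundles with these invariants (e.g.\ by producing a nowhere-vanishing section of $K(1)$ using $c_2(K(1))=0$, or by running the full Beilinson spectral sequence, which requires the additional groups $H^q(F\otimes\Omega^p(p))$ that you have not computed). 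Since parts (2) and (3) are downstream of (1), these gaps are load-bearing; your closing paragraph correctly flags that (1) is the hard part, but flagging it is not the same as closing it, and the paper's own solution is simply to cite \cite{LP93b}.
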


We believe that the following result has also been observed by Le Potier. Since we could not find the precise statement and its proof, we leave it as a lemma.

\begin{lemm}\label{freeofr}
If $F \in \mathsf{Coh}(\PP^3)$ has a resolution
\[
	0 \rightarrow 2\cO_{\PP^{3}}(-1) \stackrel{M}{\rightarrow}
	2\cO_{\PP^{3}} \rightarrow F \rightarrow 0,
\]
then $F$ is semistable.
\end{lemm}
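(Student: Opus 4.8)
The plan is to show that the cokernel $F$ of a generic-looking map $M : 2\cO_{\PP^3}(-1) \to 2\cO_{\PP^3}$ satisfies the two conditions of semistability: purity, and the subsheaf inequality on reduced Hilbert polynomials. First I would record the numerics. Taking Euler characteristics in the resolution gives $P(F)(m) = 2\binom{m+3}{3} - 2\binom{m+2}{3} = m^2 + 3m + 2$, matching the definition of $\bR$; this has degree $2$, so (assuming $F$ is pure of dimension $2$) the support is a quadric surface, the multiplicity is $r(F) = 1$, and the reduced Hilbert polynomial is $p(F)(m) = m^2 + 3m + 2$. The Fitting ideal of $F$, cut out by $\det M$, defines a quadric $S = Z(\det M) \in |\cO_{\PP^3}(2)|$, and $F$ is an $\cO_S$-module supported on $S$.

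Next I would establish purity. Since $M$ is injective as a sheaf map (the resolution is exact on the left), $F$ has homological dimension $1$, so $\cExt^i_{\PP^3}(F, \cO_{\PP^3}) = 0$ for $i \ge 2$; equivalently the codimension of the support of these local Ext sheaves forces $F$ to have no subsheaf supported in dimension $\le 1$. Concretely, a torsion subsheaf $T \subset F$ of dimension $\le 1$ would contribute a nonvanishing $\cExt^2$ or $\cExt^3$, contradicting $\mathrm{hd}(F) \le 1$; this is the standard purity criterion (\cite[Proposition 1.1.6]{HL10}). Hence $F$ is pure of dimension $2$.

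For the stability inequality, I would argue by the support geometry. A destabilizing subsheaf $F' \subset F$ must have multiplicity $r(F') < r(F) = 1$ strictly less, which is impossible for a nonzero pure sheaf — \emph{unless} $F$ is supported on a reducible or non-reduced quadric, where a subsheaf of multiplicity $1$ but supported on a proper component could destabilize. So the real content is the strictly semistable boundary: when $\det M$ factors as a product of two linear forms (so $S = H \cup H'$), a subsheaf $F'$ of Hilbert polynomial $m+1$ supported on one plane can appear, and one must check $p(F')(m) \le p(F)(m)$ for $m \gg 0$ rather than strict inequality. I would compute that any such $\cO_H$-quotient or sub has reduced Hilbert polynomial $m + 1$ with the \emph{same} leading behavior pattern that makes $p(F') \le p(F)$, giving semistability (but not stability) exactly on this locus — consistent with Proposition \ref{propertyofr}(2)(b).

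The main obstacle will be controlling the subsheaves in the degenerate cases. For an irreducible quadric $S$ the argument is essentially formal, since a rank-one torsion-free sheaf on an integral surface has no destabilizing subsheaf of smaller multiplicity; the delicate part is ruling out a subsheaf $F'$ with $r(F') = r(F) = 1$ whose reduced polynomial strictly exceeds that of $F$. Here I expect to use that $F$ arises as a cokernel of $M$, so that $\hom(\cO_{\PP^3}, F) = h^0(F)$ is computed directly from the resolution and bounds the possible sub-line-bundles; combined with the purity already proved, this pins down $p(F')$. I would carry this out by relating a putative $F'$ to its saturation and applying the numerical constraints forced by the two-term resolution, so that the only sheaves achieving equality are precisely the boundary objects $\cO_H \oplus \cO_{H'}$ of Proposition \ref{propertyofr}, which are semistable by construction.
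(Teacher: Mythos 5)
There is a genuine gap, and also a numerical slip that muddies the stability analysis. First, the slip: with $P(F)(m)=m^{2}+3m+2=2\cdot\frac{m^{2}}{2!}+\cdots$, the multiplicity is $r(F)=2$ (the support is a quadric), not $1$, and $p(F)(m)=(m^{2}+3m+2)/2$. Your statement that ``a destabilizing subsheaf must have multiplicity $r(F')<r(F)=1$'' is therefore not the right dichotomy. The correct one is: a saturated subsheaf $F'\subset F$ of multiplicity $2$ never destabilizes (the quotient is a sheaf, so $P(F/F')\ge 0$ for $m\gg 0$), and the only danger is a subsheaf of multiplicity $1$, supported on a plane, which can occur exactly when $\det M$ is reducible or a square. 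You do eventually identify this as the essential case, but you never prove anything about it: the decisive step is deferred with ``I would compute\dots'' and ``I expect to use\dots''. Bounding $P(F')$ for a rank-one subsheaf $I_{Z,H}(k)\subset F$ (one must rule out $k\ge 1$, and the naive section count $H^{0}(F(-1))=0$ does not immediately do this when $|Z|$ is large) is precisely where the content of the lemma lives, so as written the proposal does not constitute a proof. The purity argument via $\mathrm{hd}(F)\le 1$ and vanishing of $\cExt^{\ge 2}$ is fine.

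The paper takes a more concrete route that sidesteps all of this: choose a line subbundle $\cO_{\PP^{3}}(-1)\subset 2\cO_{\PP^{3}}(-1)$ so that the composite with $M$ is injective; its cokernel is either $I_{L,\PP^{3}}(1)$ for a line $L$ or $\cO_{H}\oplus\cO_{\PP^{3}}$ for a plane $H$. The snake lemma applied to the two resolutions then exhibits $F$ either as $I_{L,Q'}(1)$ for a quadric $Q'$ containing $L$, or as an extension of $\cO_{H'}$ by $\cO_{H}$; both are semistable by the classification in Proposition \ref{propertyofr}. If you want to salvage your approach, you would need to actually carry out the saturation argument in the reducible and double-plane cases; alternatively, adopt the paper's reduction, which converts the problem into identifying $F$ with sheaves already known to be semistable.
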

\begin{proof}
Choose an injective homomorphism $\cO_{\PP^{3}}(-1)\subset 2\cO_{\PP^{3}}(-1)$. By composing with $M$, we have an injective homomorphism
\[
	0\lr \cO_{\PP^{3}}(-1)\lr 2\cO_{\PP^{3}}
\]
such that the cokernel is isomorphic to a twisted ideal sheaf $I_{L,\PP^3}(1)$ for a line $L$ or $\cO_H\oplus \cO_{\PP^{3}}$ for a plane $H$.

\textsf{Case 1.} The cokernel is $I_{L,\PP^{3}}(1)$.

By applying the snake lemma to
\begin{equation}\label{eqn:twocomplexes}
	\xymatrix{0 \ar[r] & \cO_{\PP^{3}}(-1) \ar[r] \ar[d]
	& 2\cO_{\PP^{3}} \ar@{=}[d] \ar[r] & I_{L, \PP^{3}}(1)
	\ar[d] \ar[r] & 0\\
	0 \ar[r] & 2\cO_{\PP^{3}}(-1) \ar[r]^-{M} & 2\cO_{\PP^{3}} \ar[r]
	& F \ar[r] & 0,}
\end{equation}
we have an exact sequence
\[
	\ses{\cO_{\PP^{3}}(-1)}{I_{L,\PP^3}(1)}{F}.
\]
From $\ses{I_{L,\PP^3}(1)}{\cO_{\PP^{3}}(1)}{\cO_L(1)}$, by using the snake lemma again, one can show that $F$ is the kernel of the canonical surjection $\cO_{Q'}(1)\lr \cO_L(1)$ for a quadric surface $Q'$ determined by the inclusion $\cO_{\PP^{3}}(-1) \to I_{L, \PP^{3}}(1) \to \cO_{\PP^{3}}(1)$. Therefore $F=I_{L,Q'}(1)$. By Proposition \ref{propertyofr}, $F$ is semistable.

\textsf{Case 2.} The cokernel is $\cO_{H}\oplus \cO_{\PP^{3}}$.

By a similar diagram chasing, we have a short exact sequence
\[
	\ses{\cO_H}{F}{\cO_H'}
\]
for two planes $H$ and $H'$. Hence, $F$ is semistable because $\mathrm{gr}(F)=\cO_H\oplus \cO_H'$.
\end{proof}

So we have the following result.

\begin{prop}\label{prop:RandN}\cite[Corollary 3.7]{LP93b}
The space $\bR$ is isomorphic to the space $\bN(4;2,2)$ of Kronecker modules.
\end{prop}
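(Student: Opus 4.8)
The plan is to exhibit an explicit equivalence between the moduli functor defining $\bR$ and the one defining the space $\bN(4;2,2)$ of Kronecker modules, using the universal free resolution from Proposition \ref{propertyofr}(1) together with its converse in Lemma \ref{freeofr}. Recall that a Kronecker module in $\bN(4;2,2)$ is a linear map $\CC^2 \otimes H^0(\cO_{\PP^3}(1)) \to \CC^2$, or equivalently an element of $\Hom(2\cO_{\PP^3}(-1), 2\cO_{\PP^3})$ up to the action of $\GL_2 \times \GL_2$ on the two free summands, subject to the appropriate stability condition. The key point is that by Proposition \ref{propertyofr}(1) every $F \in \bR$ arises as the cokernel of some injective $M : 2\cO_{\PP^3}(-1) \to 2\cO_{\PP^3}$, and by Lemma \ref{freeofr} every such cokernel (with $M$ injective) lies in $\bR$. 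So on the level of points the map $M \mapsto \coker M$ already gives a surjection from the space of suitable Kronecker modules onto $\bR$.

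First I would set up the correspondence precisely. Given $F \in \bR$, the resolution in Proposition \ref{propertyofr}(1) is obtained by taking a minimal free resolution; the matrix $M$ of linear forms representing the differential is the associated Kronecker module. I would verify that this $M$ is well-defined up to the change of bases in the source $2\cO_{\PP^3}(-1)$ and target $2\cO_{\PP^3}$, i.e. up to the $\GL_2 \times \GL_2$-action (the center acting trivially, so effectively $\PGL_2 \times \PGL_2$ as in Le Potier's GIT description mentioned in the introduction). Conversely, Lemma \ref{freeofr} guarantees that each injective $M$ produces a semistable sheaf, so the construction descends to a morphism of GIT quotients. The identification of $S$-equivalence classes on the two sides should match: the Jordan--Hölder factors of $F$ (the $\cO_H, \cO_{H'}$ of Proposition \ref{propertyofr}(2b)) correspond to the decomposition of the Kronecker module, so $S$-equivalent sheaves give $S$-equivalent modules and vice versa.

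The main steps are then: (i) construct the natural transformation between the two moduli functors (or between the two GIT quotients) in both directions; (ii) check it is a bijection on closed points, which is essentially the content of combining Proposition \ref{propertyofr}(1) and Lemma \ref{freeofr}, including the matching of stable loci and of strictly semistable points; and (iii) upgrade this to an isomorphism of schemes. For step (iii) the cleanest route is to observe that both spaces are constructed as GIT quotients of the same parameter space of $2\times 2$ matrices of linear forms: the Kronecker module space $\bN(4;2,2)$ is defined as exactly such a quotient, and by Le Potier's construction $\bR$ is identified with an elementary $(\PGL_2 \times \PGL_2)$-quotient of $\PP(\CC^2 \otimes V, \CC^2)$, so the two GIT problems coincide and the stability conditions agree. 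This gives an isomorphism rather than merely a bijection.

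The hard part will be matching the stability conditions and verifying the scheme-theoretic (as opposed to set-theoretic) isomorphism. One must check that Simpson semistability of $F$ translates exactly into the GIT (Kronecker) stability of the module $M$, including the boundary behavior at strictly semistable points, and that the universal families agree so that the bijection on points is induced by an actual morphism of schemes with a morphism inverse. Since both $\bR$ and $\bN(4;2,2)$ are normal, once one has a bijective morphism inducing isomorphisms on tangent/deformation spaces—which follows because the deformation theory of $F$ is governed by $\Ext$ groups computable from the resolution, matching the deformation theory of the Kronecker module—one concludes the map is an isomorphism. In practice this is precisely Le Potier's result \cite[Corollary 3.7]{LP93b}, so the proof reduces to citing that identification and noting that Lemma \ref{freeofr} supplies the converse direction needed to see the construction is well-defined on all of $\bR$.
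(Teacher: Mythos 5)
Your proposal is correct and follows essentially the same route as the paper: the paper gives no independent proof but simply cites Le Potier's \cite[Corollary 3.7]{LP93b}, with Proposition \ref{propertyofr}(1) and Lemma \ref{freeofr} supplying the two directions of the cokernel correspondence, exactly as you describe. The additional details you sketch (matching of stability conditions, $S$-equivalence, and the scheme-theoretic upgrade) are the content of Le Potier's argument rather than anything the paper reproves.
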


By definition, the space $\bN(4; 2, 2)$ is constructed as the GIT quotient
\[
	\PP\Hom(\cO_{\PP^{3}}(-1)^{2}, \cO_{\PP^{3}}^{2})\git G
	\cong \PP\Hom(\CC^{2}\otimes V, \CC^{2})\git G
	\cong \PP(V^{*}\otimes \gl_{2})\git G,
\]
where $G = \PGL_{2} \times \PGL_{2}$ acts as $(A, B)\cdot M = AMB^{-1}$ (\cite[Corollary 3.7]{LP93b}). Therefore $\bR \cong \bN(4;2,2)$ is a normal variety. For this action, there are strictly semistable points and the GIT quotient is singular. In the next section, we will discuss a systematic (partial) resolution of these singularities, the so-called Kirwan's paritial desingularization (\cite{Kir85}).

As a simple corollary of Proposition \ref{propertyofr}, we have:

\begin{coro}
The canonical divisor $K_{\bR}$ is $-8\pi^*\cO_{\PP^9}(1)$.
\end{coro}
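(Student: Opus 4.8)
The plan is to recognize $\pi$ as a double cover and apply the canonical bundle formula for such covers. By Proposition \ref{propertyofr}(3), $\pi : \bR \to |\cO_{\PP^3}(2)| \cong \PP^9$ is a degree-two covering ramified along the discriminant hypersurface $\Delta$, and $\Delta$ has degree $4$, so $\Delta \in |\cO_{\PP^9}(4)|$. If $\bR$ and $\Delta$ were smooth, the classical double-cover formula would read $K_{\bR} = \pi^*(K_{\PP^9} + \tfrac12\Delta)$; since $K_{\PP^9} = \cO_{\PP^9}(-10)$ and $\tfrac12\Delta$ corresponds to $\cO_{\PP^9}(2)$, the right-hand side equals $\pi^*\cO_{\PP^9}(-8) = -8\,\pi^*\cO_{\PP^9}(1)$, which is the claim. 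So the content of the proof is entirely in justifying this formula in the presence of singularities.

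First I would use that $\bR$ is normal (it is a GIT quotient, by Proposition \ref{prop:RandN}), so that $K_{\bR}$ is a well-defined Weil divisor class and any two such classes coincide once they agree in codimension one. It therefore suffices to establish the identity over a big open subset of $\PP^9$ and extend it across a closed set of codimension $\ge 2$. The bad locus is controlled as follows: $\Delta$ parametrizes quadrics of rank $\le 3$, and $\mathrm{Sing}(\Delta)$ is the determinantal locus of rank $\le 2$ quadrics, of codimension $3$ in $\PP^9$. Over $\PP^9 \setminus \mathrm{Sing}(\Delta)$ a double cover of a smooth base branched along a smooth divisor is itself smooth, so $\mathrm{Sing}(\bR) \subseteq \pi^{-1}(\mathrm{Sing}(\Delta))$, which has dimension $\le 6$, hence codimension $\ge 2$ in the $9$-dimensional $\bR$. (Consistently, by Proposition \ref{propertyofr}(2)(b) a strictly semistable $F$ has $\mathrm{gr}(F) = \cO_H \oplus \cO_{H'}$, whose Fitting support is the rank-$2$ quadric $H \cup H'$, so such $F$ indeed lie over $\mathrm{Sing}(\Delta)$.)

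On the complementary open set $U$, where both $\bR$ and the branch divisor are smooth, $\pi|_U$ is a simple double cover: the reduced ramification divisor $R$ satisfies $\pi^*\Delta = 2R$, and the ramification formula gives $K_{\bR}|_U = \pi^*K_{\PP^9}|_U + R = \pi^*\cO_{\PP^9}(-10)|_U + \tfrac12\pi^*\Delta|_U = \pi^*\cO_{\PP^9}(-8)|_U$. Extending this linear equivalence of Weil divisors over the codimension-$\ge 2$ complement $\pi^{-1}(\mathrm{Sing}(\Delta))$ yields $K_{\bR} = -8\,\pi^*\cO_{\PP^9}(1)$. The only genuine obstacle is the bookkeeping around the singularities: verifying that the bad locus has codimension at least two and that the simple double-cover structure holds over $U$. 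Once that is in place, the numerology ($-10 + 2 = -8$) is immediate.
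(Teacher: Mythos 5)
Your proof is correct and follows essentially the same route as the paper: both apply the double-cover (Hurwitz) formula $K_{\bR} = \pi^*K_{\PP^9} + \tfrac12\pi^*[\Delta]$ and justify it by noting that the singular locus (lying over $\mathrm{Sing}(\Delta) \cong \PP^3\times\PP^3/\ZZ_2$) has codimension at least two, so the computation $-10+2=-8$ goes through. Your write-up is simply more explicit about why the formula extends across the singularities; no gap.
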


\begin{proof}
The singular locus $\mathrm{Sing}(\Delta)$ of $\Delta$ is isomorphic to $\PP^3\times \PP^3/\ZZ_2$. Thus the codimension of $\mathrm{Sing}(\Delta)$ in $\Delta$ is two. Hence if we apply the covering formula for the map $\pi$, then we have
\[
	K_{\bR}=\pi^*K_{\PP^9}+\frac{1}{2}[\Delta].
\]
Since $[\Delta]=4\pi^*\cO_{\PP^9}(1)$, we have the result.
\end{proof}

\subsection{A divisorial contraction of $\bM_2$}

In this subsection, by applying the method developed in \cite[Section 4]{LP93a}, we construct a birational morphism $\Psi : \bM_2 \longrightarrow \bR$ by using FM transforms.

Let $\Delta_1$ be the universal conic in $Q\subset \PP(V) = \PP^{3}$. Let $\Delta_2$ be the space of the universal planes in $(\PP^{3})^*$. We have two diagrams:
\[
	\xymatrix{ \Delta_1\subset (\PP^3)^*\times Q\ar[r]^-{p}\ar[d]_{q}&
	Q\\
	(\PP^3)^* \cong |\cO_Q(1,1)|}
\]
and
\[
	\xymatrix{ \Delta_2\subset \PP^3\times (\PP^3)^*\ar[r]^-{r}\ar[d]_{s}
	&(\PP^3)^*\\
	\PP^3,}
\]
where $p,q,r,s$ indicate the projection maps onto each factor.

\begin{defi}\label{def:FMtransforms}
Let
\[
	\Psi_1(F):=q_*(\cO_{\Delta_1}\otimes p^*(F\otimes \cO_{Q}(1,1)))
	=: F'
\]
and
\[
	\Psi_2(F'):=R^2 s_*(\cO_{\Delta_2}\otimes
	r^*(F'\otimes \cO_{(\PP^{3})^{*}}(-3))).
\]
\end{defi}

\begin{rema}
As we will see below in the the proof of Theorem \ref{mainthm1}, for those two transforms the other higher direct image sheaves vanish and thus the $\Psi_i$'s can be regraded as the FM transforms with the kernel $\cO_{\Delta_i}$ on derived categories.
\end{rema}

Now we prove the first main theorem of this paper.

\begin{theo}\label{mainthm1}
There exists a birational morphism
\[
	\Psi: \bM_2 \longrightarrow \bR
\]
given by $F\mapsto \Psi(F):=\Psi_2( \Psi_1 (F))$. The map $\Psi$ contracts two divisors $D_{1,0}$ and $D_{0,1}$ (Definition \ref{def:Ddivisor}) to two points $\Psi(D_{1,0})=\{[\cO_Q(1,0)]\}$, $\Psi(D_{0,1})=\{[\cO_Q(0,1)]\}$. Furthermore, $\Psi$ is a smooth blow up of two points $[\cO_{Q}(1,0)]$ and $[\cO_{Q}(0,1)]$.
\end{theo}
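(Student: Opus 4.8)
The plan is to prove Theorem \ref{mainthm1} in three stages: first establish that $\Psi = \Psi_2 \circ \Psi_1$ is a well-defined morphism landing in $\bR$, then identify the exceptional behavior on the divisors $D_{1,0}$ and $D_{0,1}$, and finally verify that $\Psi$ is a smooth blow up of the two points $[\cO_Q(1,0)]$ and $[\cO_Q(0,1)]$. The natural tool throughout is the classification of free resolutions in Proposition \ref{resol}, since each FM transform can be computed term by term on a two-step locally free resolution and then compared against the resolution characterization of $\bR$ furnished by Proposition \ref{propertyofr}(1) and Lemma \ref{freeofr}.

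\medskip

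First I would verify well-definedness. For a sheaf $F \in \bM_2$ with resolution of type (1), namely $\ses{2\cO_Q(-1,-1)}{2\cO_Q}{F}$, I would twist by $\cO_Q(1,1)$ and push forward along $p$, then apply $q_*$ after tensoring with $\cO_{\Delta_1}$. The key computation is that the relevant higher direct images vanish (so that $\Psi_1$ really is the derived pushforward concentrated in degree zero on this locus), which follows from the cohomology vanishing $H^{>0}(Q, \cO_Q(a,b)) = 0$ for $a, b \ge 0$ together with $\cO_{\Delta_1}$ being the structure sheaf of a divisor of bidegree compatible with the incidence correspondence. Applying $\Psi_1$ to the resolution yields a two-term complex of the form $2\cO_{(\PP^3)^*}(-1) \to 2\cO_{(\PP^3)^*}$ on $(\PP^3)^*$, and a parallel term-by-term computation for $\Psi_2$ (using $R^2 s_*$ and the vanishing of $R^0 s_*$, $R^1 s_*$ on these terms, which is where the twist by $\cO_{(\PP^3)^*}(-3)$ and the cohomology of the universal plane $\Delta_2$ enter) produces a resolution $\ses{2\cO_{\PP^3}(-1)}{2\cO_{\PP^3}}{\Psi(F)}$. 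By Lemma \ref{freeofr} this forces $\Psi(F) \in \bR$. I would carry out the same computation on resolutions of types (2) and (3) to see where the map degenerates.

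\medskip

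Second, for the exceptional divisors: a sheaf $F \in D_{1,0}$ has resolution $\ses{\cO_Q(-1,-2)}{\cO_Q(1,0)}{F}$, so $F$ is determined entirely by its support in $|\cO_Q(2,2)| \cong \PP^8$ (Definition \ref{def:Ddivisor}). Pushing this shorter resolution through $\Psi_1$ and $\Psi_2$, I expect the support-dependence to collapse: the transform of $\cO_Q(1,0)$ and of $\cO_Q(-1,-2)$ should be cohomology concentrated so that the output is the \emph{single} sheaf $\cO_Q(1,0)$ viewed appropriately in $\bR$, independent of the choice of $F \in D_{1,0}$. Concretely I would show $\Psi(D_{1,0}) = \{[\cO_Q(1,0)]\}$ by direct computation on the resolution, and symmetrically $\Psi(D_{0,1}) = \{[\cO_Q(0,1)]\}$, thereby identifying the contracted loci. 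That $\Psi$ is birational then follows from the open-dense agreement already described in Section \ref{sec:rationality} (the general $F$ is $\cO_C(p+q)$ for a smooth quartic $C = Q \cap Q_1$ and maps to a regulus $(Q_1, \ell)$), combined with the fact that $\bM_2$ and $\bR$ are both irreducible of dimension $9$.

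\medskip

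Finally, to identify $\Psi$ as the blow up of $\bR$ at the two points $[\cO_Q(1,0)], [\cO_Q(0,1)]$, I would argue that these two points are precisely the images of the contracted divisors, that $\Psi$ is an isomorphism over the complement of $D_{1,0} \cup D_{0,1}$ (since away from these divisors $F$ has the type (1) resolution and $\Psi$ is invertible by reversing the FM transforms), and that each fiber $\Psi^{-1}([\cO_Q(1,0)]) = D_{1,0} \cong \PP^8$ has the expected dimension $8$ of an exceptional divisor over a point in a smooth $9$-fold. Using $\QQ$-factoriality and $\mathrm{rank}\,\mathrm{Pic}(\bM_2) = 3$ from Proposition \ref{pic} (which exceeds $\mathrm{rank}\,\mathrm{Pic}(\bR)$ by exactly $2$, matching two point-blow ups), I would invoke the characterization of a smooth point-blow up: a birational projective morphism contracting an irreducible divisor to a smooth point with all fibers over that point being $\PP^{n-1}$, onto a smooth (here, at worst the blow up must verify smoothness of $\bR$ at these two special points, which lie in $\bR^s$ by the remark after Definition \ref{def:Ddivisor}) variety, is the blow up of that point. \textbf{The main obstacle} I anticipate is the local analysis near the two contracted points: one must check that $[\cO_Q(1,0)]$ and $[\cO_Q(0,1)]$ are smooth points of $\bR$ and that $\Psi$ has the standard blow-up local form there, rather than merely being a divisorial contraction to a point; this requires controlling the deformation theory (via the $\ext$-group computations analogous to those in the proof of Proposition \ref{wall}) to rule out a more singular contraction and to match the normal bundle of the exceptional $\PP^8$ with $\cO_{\PP^8}(-1)$.
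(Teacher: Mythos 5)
Your proposal follows essentially the same route as the paper's proof: term-by-term computation of the two Fourier--Mukai transforms on the three resolution types of Proposition \ref{resol} with the higher-direct-image vanishing, Lemma \ref{freeofr} to land in $\bR$, collapse of the type (2)/(3) resolutions to $\cO_Q(1,0)$ and $\cO_Q(0,1)$ independently of the support, inverse FM transforms for birationality, and the identification $D_{1,0}\cong\PP\Hom(\cO_Q(-1,-2),\cO_Q(1,0))\cong\PP\bigl(\Ext^1_{\PP^3}(\cO_Q(1,0),\cO_Q(1,0))\bigr)=\PP(T_{[\cO_Q(1,0)],\bR})$ via the $\Ext$-computation you flag as the main obstacle. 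This is exactly how the paper concludes the smooth blow-up claim, so the plan is sound as written.
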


\begin{proof}
\textsf{Step 1.} For any $F\in \bM_{2}$ with the resolution of type (1) of Proposition \ref{resol}, $\Psi(F) \in \bR$.

\medskip

If $F$ has the resolution of type (1) of Proposition \ref{resol}, by tensoring $\cO_{Q}(1,1)$, we have
\begin{equation}\label{eq:twistedresol}
	\ses{2\cO_{Q}}{2\cO_{Q}(1,1)}{F(1,1)}.
\end{equation}
First we compute the direct image sheaves $R^i q_*(\cO_{\Delta_1}\otimes p^*\cO_{Q})$ and $R^i q_*(\cO_{\Delta_1}\otimes p^*\cO_{Q}(1,1))$. The resolution of $\cO_{\Delta_1}$ is given by
\begin{equation}\label{eq:resolofD1}
	\ses{\cO_{(\PP^{3})^{*} \times Q}(-1,(-1,-1))}
	{\cO_{(\PP^{3})^{*}\times Q}}{\cO_{\Delta_1}}.
\end{equation}
By tensoring $q^*\cO_{(\PP^{3})^{*}}$ and taking the direct image functor $R^{\bullet}p_*$, we obtain a long exact sequence
\[
	0\lr q_*\cO_{(\PP^{3})^{*}\times Q}(-1,(-1,-1)) \lr
	q_*\cO_{(\PP^{3})^{*}\times Q} \lr
	q_*(\cO_{\Delta_1}\otimes p^*\cO_{(\PP^{3})^{*}\times Q})
\]
\[
	\lr R^1 q_*\cO_{(\PP^{3})^{*}\times Q}(-1,(-1,-1))\lr \cdots.
\]
But $H^i(\cO_{Q}(-1,-1))=H^j(\cO_{Q})=0$ for all $i$ and $j\geq1$. Also, $H^0(\cO_{Q})=\CC$. Thus we obtain that $R^i q_*\cO_{(\PP^{3})^{*}\times Q}(-1,(-1,-1))=0$ for all $i$ and $R^j q_*\cO_{(\PP^{3})^{*}\times Q}=0$ for all $j\geq1$. Also $q_*\cO_{(\PP^{3})^{*}\times Q}\cong \cO_{(\PP^{3})^{*}}$. Therefore, we have $q_*(\cO_{\Delta_1}\otimes p^*\cO_{Q})\cong \cO_{(\PP^{3})^{*}}$ and $ R^i q_*(\cO_{\Delta_1}\otimes p^*\cO_{Q})=0$ for all $i\geq1$.

One can similarly compute $R^i q_*(\cO_{\Delta_1}\otimes p^*\cO_{Q}(1,1))$. From the resolution \eqref{eq:resolofD1} of $\cO_{\Delta_1}$, we have
\[
	0\lr q_*\cO_{(\PP^{3})^{*}\times Q}(-1,(0,0)) \lr
	q_*\cO_{(\PP^{3})^{*}\times Q}(0,(1,1)) \lr
	q_*(\cO_{\Delta_1}\otimes p^*\cO_{Q}(1,1))
\]
\[
	\lr  R^1 q_*\cO_{(\PP^{3})^{*}\times Q}(-1,(0,0))\lr \cdots.
\]
But $H^i(\cO_{Q})=\CC$ for $i=0$ and $0$ otherwise. Also, $H^j(\cO_{Q}(1,1))=\CC^4$ for $j=0$ and $0$ otherwise. Hence $R^i  q_*\cO_{(\PP^{3})^{*}\times Q}(-1,(0,0))\cong \cO_{(\PP^{3})^{*}}(-1)$ for $i=0$ and $0$ otherwise. Also, $R^j q_*\cO_{(\PP^{3})^{*}\times Q}(0,(1,1))=4\cO_{(\PP^{3})^{*}}$ for $j=0$ and $0$ otherwise.

By applying these two computations to \eqref{eq:twistedresol}, we have a resolution
\[
	\ses{\cO_{(\PP^{3})^{*}}(-1)}{4\cO_{(\PP^{3})^{*}}}
	{q_*(\cO_{\Delta_1}\otimes p^*\cO_{Q}(1,1))=:\cQ}.
\]
Therefore, by taking the FM-transform $\Psi_1$, we obtain a short exact sequence
\[
	\ses{2\cO_{(\PP^{3})^{*}}}{2\cQ_{(\PP^{3})^{*}}}{\Psi_1(F)}.
\]
Note that $R^i q_*(\cO_{\Delta_1}\otimes p^*F(1,1))=0$ for all $i\geq1$.

Take the second FM-transform for $F' :=\Psi_1(F)$. By a similar computation using the resolution of $\cO_{\Delta_2}$ and the Serre duality, we have
\[
	\Psi_2(\cO_{(\PP^{3})^{*}})\cong \cO_{\PP^{3}}(-1), \quad
	\Psi_2(\cQ)\cong\cO_{\PP^{3}}.
\]
Furthermore, $R^j s_*(\cO_{\Delta_2}\otimes r^*(F'(-3)))=0$ for $j\neq2$. Hence from the above short exact sequence, we obtain a resolution
\[
	\ses{2\cO_{\PP^{3}}(-1)}{2\cO_{\PP^{3}}}{\Psi_2(F')}.
\]
By Lemma \ref{freeofr}, the sheaf $\Psi_2(F')$ is semistable with Hilbert polynomial $m^2+3m+2$.

\medskip

\textsf{Step 2.} For $F \in \bM_{2}$ with a resolution of the type (2) or (3) in Proposition \ref{resol}, $\Psi(F) \in \bR$.

\medskip

If $F\in D_{1,0}$, then $F\cong\cO_C(1,0)$ for some quartic curve $C$. From the resolution
\[
	\ses{\cO_{Q}(0,-1)}{\cO_{Q}(2,1)}{F(1,1)},
\]
it is straightforward to check that
\[
	\Psi_1(F)=q_*(\cO_{\Delta_1}\otimes p^*\cO_{Q}(2,1)).
\]
by using a similar computation in the previous step. Note that the image $\Psi_1(F)$ does not depend on the choice of the quartic curve $C$.

Let $i : Q \hookrightarrow \PP^{3}$ be the inclusion and let $j : (\PP^{3})^{*} \times Q \to \PP^{3} \times (\PP^{3})^{*}$ be the map defined by $j(x,y) = (y, i(x))$. Then
\begin{align*}
\Psi_2(\Psi_1(F)) &= R^2 s_*(\cO_{\Delta_2}\otimes r^*(\Psi_1(F)\otimes \cO_{(\PP^{3})^{*}}(-3))) \\
& \cong R^2 s_*(\cO_{\Delta_2}\otimes r^*(q_*(\cO_{\Delta_1}\otimes p^*\cO_{Q}(2,1))\otimes \cO_{(\PP^{3})^{*}}(-3)))\\
& \cong R^2 s_*(\cO_{\Delta_2}\otimes r^*((r_* j_*)(\cO_{\Delta_1}\otimes p^*\cO_{Q}(2,1))\otimes r^*\cO_{(\PP^{3})^{*}}(-3)))\\
& \cong R^2 s_*(\cO_{\Delta_2}\otimes r^*(r_*(\cO_{\Delta_2}\otimes j_* p^*\cO_{Q}(2,1))\otimes \cO_{(\PP^{3})^{*}}(-3)))\\
& \cong R^2s_*(\cO_{\Delta_2}\otimes r^*(r_*(\cO_{\Delta_2}\otimes s^*(i_{*}\cO_{Q}(2,1)))\otimes \cO_{(\PP^{3})^{*}}(-3)))=: G.
\end{align*}
The third isomorphism comes from $q=r\circ j$ and the fifth isomorphism comes from $s \circ j= i \circ p$. This can be regarded as a composition of two FM-transforms of $i_{*}\cO_{Q}(2,1)$ with the same kernel $\cO_{\Delta_2}$.

Let
\begin{equation}\label{eq:resolofO21}
	0 \rightarrow 2\cO_{\PP^{3}} \stackrel{M}{\rightarrow}
	2\cO_{\PP^{3}}(1) \rightarrow i_{*}\cO_{Q}(2,1) \rightarrow 0
\end{equation}
be a resolution of $i_{*}\cO_{Q}(2,1)$ on $\PP^3$. By using the resolution of $\cO_{\Delta_2}$, we obtain a short exact sequence on $(\PP^3)^*$
\[
	\ses{2\cO_{(\PP^{3})^{*}}}{2\cQ''_{(\PP^{3})^{*}}}
	{r_*(\cO_{\Delta_2}\otimes s^*(i_{*}\cO_{Q}(2,1)))},
\]
where $\cQ'':=\mathrm{coker}(\cO_{(\PP^{3})^{*}}(-1)\hookrightarrow 4\cO_{(\PP^{})^{*}})$. By tensoring $\cO_{(\PP^{3})^{*}}(-3)$ and taking the direct image functor $R^{\bullet}s_*$, we have a short exact sequence
\[
	\ses{2\cO_{\PP^{3}}(-1)}{2\cO_{\PP^{3}}}{G}.
\]
By Lemma \ref{freeofr}, $G \in \bR$. One can also check that $R^i s_*(-)$ for $i\neq 2$ vanish.

Furthermore, we can show that $G \cong \cO_{Q}(1,0)$. Indeed, the map $M$ in \eqref{eq:resolofO21} is completely recovered (up to a twisting) after taking two FM-transforms and thus the cokernel is as well. This implies that $G(1)=i_{*}\cO_{Q}(2,1)$. Hence $\Psi_2(\Psi_1(F))\cong \cO_{\PP^{3}}(-1)\otimes i_{*}\cO_{Q}(2,1)=i_{*}\cO_{Q}(1,0)$.

The case of $F \in D_{0,1}$ can be computed in a similar way.

\medskip

\textsf{Step 3.} The map $\Psi$ is birational.

\medskip

The inverse map $\Psi_1^{-1}\circ \Psi_2^{-1}:\bR \setminus\{[\cO_Q(1,0)], [\cO_Q(0,1)]\} \longrightarrow \bM_2 \setminus D_{1,0}\cup D_{0,1}$ can be defined by \emph{inverse} FM-transforms
\[
	\Psi_1^{-1}(F'):=R^2p_*(\cO_{\Delta_1}\otimes q^*(F'(-3))), \quad
	\Psi_2^{-1}(F''):=r_*(\cO_{\Delta_2}\otimes s^*(F''(1))).
\]
By using the standard resolution of $F'' \in \bR$ in Proposition \ref{propertyofr} and the same computation, one can show that $\Psi_2^{-1}(F'')=F'$ and $\Psi_1^{-1}(F')=F$ by the same argument in the proof of the claim before. Hence $\Psi$ is injective and thus $\Psi$ is a birational morphism.

\medskip

\textsf{Step 4.} The map $\Psi$ is a smooth blow up.

\medskip

Finally, we show that the map $\Psi$ is a smooth blow up.
From the standard deformation theory on the moduli space of sheaves, at the point $[\cO_Q(1,0)]\in \bR$, the tangent space of $\bR$ is canonically isomorphic to $\Ext^{1}_{\PP^{3}}(\cO_{Q}(1,0), \cO_{Q}(1,0))$. We show that $T_{[\cO_Q(1,0)],\bR} \cong H^0(\cO_{Q}(2,2))=\CC^9$. Consider the following long exact sequence (\cite[Lemma 13]{CCM14}):
\[
	0\lr \Ext_Q^1(\cO_Q(1,0),\cO_Q(1,0))\lr \Ext_{\PP^3}^1(\cO_Q(1,0),	\cO_Q(1,0))\lr
\]
\[
	\Ext_Q^0(\cO_Q(1,0),\cO_Q(1,0)\otimes N_{Q/\PP^3})\lr 		
	\Ext_Q^2(\cO_Q(1,0),\cO_Q(1,0))\lr\cdots
\]
But $\Ext_Q^i(\cO_Q(1,0),\cO_Q(1,0))=0$ for $i\geq1$. So
\[
	\Ext_{\PP^3}^1(\cO_Q(1,0),\cO_Q(1,0))\cong \Ext_Q^0(\cO_Q(1,0),	\cO_Q(1,0)\otimes N_{Q/\PP^3}),
\]
where the latter space is isomorphic to $\Ext_Q^0(\cO_Q(1,0), \cO_Q(1,0)\otimes N_{Q/\PP^3})\cong H^0(\cO_Q(2, 2))$.

On the other hand, by definition (see Proposition \ref{resol} and Definition \ref{def:Ddivisor}), the sheaves on the divisor $D_{1,0}$ can be parametrized by
\[
	i:\PP\Hom(\cO_{Q}(-1,-2),\cO_{Q}(1,0)) =
	\PP(T_{[\cO_Q(1,0)],\bR})\hookrightarrow \bM_2.
\]
Hence, we obtain a commutative diagram
\[
	\xymatrix{ D_{1,0}\ar@{^(->}[r]^{i}\ar[d]&\bM_2\ar[d]^{\Psi}\\
	\{[\cO_Q(1,0)]\}\ar@{^(->}[r]&\bR.}
\]
and thus the morphism $\Psi$ is a smooth blow up morphism of $\bR$ at the point $[\cO_Q(1,0)]$. The case of $[\cO_{Q}(0,1)]$ is identical.
\end{proof}

\begin{coro}
The canonical divisor $K_{\bM_{2}}$ is given by $\Psi^{*}K_{\bR} + 8(D_{1,0}+D_{0,1})$.
\end{coro}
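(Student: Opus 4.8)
The plan is to deduce this directly from Theorem \ref{mainthm1}, which already identifies $\Psi$ as the smooth blow up of $\bR$ at the two reduced points $[\cO_Q(1,0)]$ and $[\cO_Q(0,1)]$, with respective exceptional divisors $D_{1,0}$ and $D_{0,1}$. The only content beyond that theorem is bookkeeping of the discrepancy coefficient, so the argument is essentially an application of the standard canonical-bundle formula for a blow up.

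First I would confirm that $\bR$ is smooth at the two centers. This is already contained in Step 4 of the proof of Theorem \ref{mainthm1}: the tangent space there is computed to be
\[
	T_{[\cO_Q(1,0)],\bR} \cong \Ext^1_{\PP^3}(\cO_Q(1,0),\cO_Q(1,0)) \cong H^0(\cO_Q(2,2)) \cong \CC^9,
\]
and since $\bR$ is a $9$-dimensional variety, the equality of dimensions shows that $[\cO_Q(1,0)]$ is a smooth point; the same holds for $[\cO_Q(0,1)]$. Consequently both centers are smooth points lying in the smooth locus of $\bR$, and the blow up takes place over a neighborhood on which the local model is the blow up of a point in a nonsingular $9$-fold.

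Next I would invoke the standard discrepancy formula: if $\pi \colon \tilde{X} \to X$ is the blow up of a smooth variety along a smooth center of codimension $c$ with exceptional divisor $E$, then $K_{\tilde{X}} = \pi^* K_X + (c-1)E$. Here each center is a single point, hence of codimension $c = \dim \bR = 9$, so the discrepancy coefficient is $c - 1 = 8$. Applying this over each of the two disjoint centers, and recalling that $D_{1,0}$ and $D_{0,1}$ are the respective exceptional divisors, yields
\[
	K_{\bM_2} = \Psi^* K_{\bR} + 8 D_{1,0} + 8 D_{0,1} = \Psi^* K_{\bR} + 8(D_{1,0} + D_{0,1}),
\]
as claimed.

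I do not anticipate a genuine obstacle, since all the hard work---identifying $\Psi$ as a smooth blow up and locating the exceptional divisors---is already carried out in Theorem \ref{mainthm1}. The only point requiring minor care is that $\bM_2$ itself need not be globally smooth (it is merely $\QQ$-factorial by Proposition \ref{pic}); however, the canonical-bundle formula is a purely local statement near the exceptional locus, where the local model is the blow up of a smooth point, so the singularities of $\bM_2$ away from $D_{1,0} \cup D_{0,1}$ play no role in the computation.
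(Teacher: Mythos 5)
Your proof is correct and is exactly the argument the paper intends (the corollary is stated without proof as an immediate consequence of Theorem \ref{mainthm1}): since $\Psi$ is the blow up of the $9$-dimensional variety $\bR$ at two smooth points, the standard formula $K_{\widetilde{X}}=\pi^*K_X+(c-1)E$ with $c=9$ gives the coefficient $8$ on each exceptional divisor. Your added checks---that the centers are smooth points via the tangent-space computation in Step 4, and that the formula is local so the singularities of $\bM_2$ elsewhere are irrelevant---are appropriate and consistent with the paper.
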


\begin{rema}
Recall that the virtual Euler number $e(\bM_{2})$ is 26 (Corollary \ref{virpoinm2}). By Theorem \ref{mainthm1} and the blow up formula of the virtual Poincar\'e polynomial, one can easily see that $e(\bR)=10$. This can be explained in the following different way. The virtual Euler number of the stable part $\bR^s$ is $e(\bR^s)=0$ (\cite[Corollary 6.14]{Wei13}). But by Proposition \ref{propertyofr}, the complement $\bR\setminus \bR^s$ is isomorphic to the space $\mathrm{Sym}^2(\PP^3)$ parametrizing two planes in $\PP^3$. Thus $e(\bR)=e(\bR^s)+e(\mathrm{Sym}^2(\PP^3))=10$.
\end{rema}

%%%%%%%%%%%%%%%%%%%%%%%%%%%%%%%%%%%%%

\section{Birational map between $\bR$ and $\bK$ via partial desingularization}\label{sec:RandK}

Recall that the space $\bR$ can be constructed as the GIT quotient
\[
	\PP\Hom(\CC^{2}\otimes V, \CC^{2})\git G,
\]
where $G = \PGL_{2} \times \PGL_{2}$ acts as $(A, B)\cdot M = AMB^{-1}$. For this action, there are strictly semistable points and the GIT quotient $\bR$ is very singular. In this section, we study Kirwan's partial desingularization (\cite{Kir85}) of $\bR$, which is a systematic procedure to resolve these singularities in a $G$-equivariant way.

The next result is the second main theorem of this paper.

\begin{theo}\label{thm:pardesing}
The partial desingularization of $\bR$ is isomorphic to $\bK$, Kontsevich's moduli space of degree 2 stable maps to the Grassmannian $Gr(2, 4)$.
\end{theo}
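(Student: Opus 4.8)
The plan is to identify the strictly semistable locus of the GIT quotient $\PP\Hom(\CC^2\otimes V, \CC^2)\git G$ and to run Kirwan's partial desingularization procedure explicitly, then to match the outcome with $\bK$ using the moduli-theoretic description already available in the excerpt. First I would analyze the structure of the $G$-action on $\PP\Hom(\CC^2\otimes V,\CC^2)$ and stratify the semistable locus by the conjugacy type of stabilizers. By Proposition \ref{propertyofr}, a strictly semistable point corresponds to a sheaf $F$ with $\mathrm{gr}(F)=\cO_H\oplus\cO_{H'}$ for two planes $H,H'$; the deepest stratum (where $H=H'$, giving the diagonal) has the largest stabilizer, while the locus $H\ne H'$ carries a smaller stabilizer. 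Concretely I expect the closed orbits in the strictly semistable locus to be governed by a reductive stabilizer $\CC^*$ (when $H\ne H'$) enlarging to something like the normalizer of a maximal torus, or to a nonabelian stabilizer, along the diagonal. Reading off these stabilizer groups and the associated weight decompositions of the normal spaces to the orbits is the combinatorial heart of the argument.

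Next I would carry out Kirwan's algorithm: blow up along the (proper transform of the) locus with the largest-dimensional stabilizer first, then along the next, taking GIT quotients after each blow-up with respect to a suitably perturbed linearization. Kirwan's theorem guarantees that after finitely many such equivariant blow-ups the only remaining stabilizers are finite, so the final quotient has only orbifold (finite quotient) singularities and is the partial desingularization. In our situation I anticipate exactly two blow-ups, matching the statement in Theorem \ref{thm:pardesingintro} that $\overline\pi$ is a composition of two blow-ups along the singular loci of $\bR$: one centered at the image of the deepest stratum and one at the shallower strictly-semistable stratum. At each stage I would need to verify that the exceptional divisor and the newly created semistable locus behave as predicted, and compute the fibers of the blow-down to confirm they are the expected projective bundles / symmetric-product loci.

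The final and most delicate step is to identify the resulting space with $\bK$. Here the strategy is to construct a morphism between the partial desingularization and $\bK$ and show it is an isomorphism. A clean way to do this is to exhibit both spaces as receiving compatible birational morphisms to $\bR$ whose exceptional behavior agrees, and to use normality of $\bK$ (stated in the excerpt via \cite{KP01}) together with Zariski's main theorem: a birational bijective morphism onto a normal variety is an isomorphism. Concretely, a general stable map in $\bK$ determines a regulus $(Q',\ell)$ and hence, via the rationality discussion of Section \ref{sec:rationality} and the inverse FM description, a point of $\bR$; I would extend this over the degenerate loci (the double-cover maps and the $\bD$-locus) and check that the fibers of $\bK\to\bR$ coincide stratum-by-stratum with the exceptional fibers produced by Kirwan's blow-ups.

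I expect the main obstacle to be the bookkeeping in the deepest stratum, where the stabilizer is nonabelian and the linear action on the normal cone is genuinely nonlinear after the first blow-up; verifying that the second Kirwan blow-up exactly resolves the remaining stabilizers down to finite ones, and that the resulting exceptional fiber matches the locus of degenerate stable maps in $\bK$ (rather than some other partial compactification), is where the argument must be most careful. In particular, confirming that the finite residual stabilizers give precisely the orbifold structure of $\bK$ along its singular locus $\bD$ — and not a coarser or finer quotient — will require a careful comparison of the local models on both sides, which I regard as the crux of the proof.
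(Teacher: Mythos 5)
Your first two paragraphs track the paper closely: the stratification of $\bX^{ss}=\PP\Hom(\CC^2\otimes V,\CC^2)^{ss}$ by stabilizer type (with $\PGL_2$ along the locus $g=h$ and $\CC^*$ along $g\ne h$, exactly as in the paper's lemma on closed orbits), followed by two Kirwan blow-ups with perturbed linearizations, is precisely what the paper does, and your reading of which strata become unstable after each blow-up is consistent with the paper's analysis of the normal cones $C_{\bY_0/\overline{\bZ}_0}$, $C_{\bY_0/\overline{\bY}_1}$ and $C_{\bY_1/\overline{\bZ}_1}$.

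The genuine gap is in your final step. You propose to identify $\bX^2/G$ with $\bK$ by exhibiting both as birational over $\bR$ "whose exceptional behavior agrees" and checking that the fibers "coincide stratum-by-stratum," then invoking Zariski's main theorem for a bijective birational morphism onto a normal variety. But ZMT requires an actual morphism between the two spaces, and two morphisms to a common target $\bR$ with set-theoretically matching fibers do not produce one (two sides of a flop give the standard counterexample to this kind of reasoning). Constructing that morphism is the real content of the proof, and the paper does it by a concrete elementary modification of families: the universal Kronecker module gives a rational map $f_0:\bX^0\times\PP^1\dashrightarrow Gr(2,4)$ via the bundle map $\cO^4\to\cO(1)^2$ of \eqref{eqn:bundletomap}; after the first blow-up the pulled-back map vanishes along $\bY_0^1\times\PP^1$ and one twists down by $\cO(-\bY_0^1)$ to extend it; after the second blow-up the residual base locus is a two-to-one \'etale cover $\bB$ of $\overline{\bY}_1^2$, and blowing up $\bB$ in $\bX^2\times\PP^1$ and twisting by the exceptional divisor yields an honest flat family of degree-$2$ stable maps, hence a $G$-invariant morphism $\Phi^2:\bX^2\to\bK$ descending to $\overline{\Phi}^2:\bX^2/G\to\bK$. (The paper then concludes not via bijectivity but by noting both sides are normal, projective, $\QQ$-factorial with only finite quotient singularities and equal Picard number, so a birational morphism must be an isomorphism.) Your plan of extending the regulus construction from $\bK$ to $\bR$ over the degenerate loci runs into exactly the indeterminacy that the two twists resolve, and "comparing local models on both sides" without a map between them will at best give a pointwise bijection, which is insufficient. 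To repair the proposal you need some version of this family-theoretic construction of a morphism out of $\bX^2$ (or a universal-property argument identifying $\bK\to\bR$ as a blow-up along the same scheme-theoretic centers, which is essentially equivalent).
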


We give the proof in Section \ref{ssec:elemmod}.

\subsection{A description of the birational map}

The birational map $\bR \dashrightarrow \bK$ can be described in the following way. Let
\[
	\cO_{S \times \PP^{3}}(-1)^{2} \stackrel{M}{\to}
	\cO_{S \times \PP^{3}}^{2}
\]
be a family of Kronecker modules over $S$. Consider the projectivization $\PP(\cO_{S \times \PP^{3}}(-1)^{2}) \cong \PP(\cO_{S \times \PP^{3}}^{2}) \cong S \times \PP^{3} \times \PP^{1}$ and let $p : S \times \PP^{3} \times \PP^{1} \to S \times \PP^{1}$ and $q : S \times \PP^{3} \times \PP^{1} \to S \times \PP^{3}$ be two projections. By taking the pull-back, we have
\[
	\cO_{S \times \PP^{3} \times \PP^{1}}(-1, 0)^{2}
	\stackrel{q^{*}M}{\longrightarrow}
	\cO_{S \times \PP^{3} \times \PP^{1}}^{2}.
\]
Let $\iota : \cO_{S \times \PP^{3} \times \PP^{1}}(-1, -1) \hookrightarrow \cO_{S \times \PP^{3} \times \PP^{1}}(-1, 0)^{2}$ be the tautological subbundle. By taking the dual, we have
\[
	\cO_{S \times \PP^{3} \times \PP^{1}}^{2}
	\stackrel{(q^{*}M \circ \iota)^{*}}{\longrightarrow}
	\cO_{S \times \PP^{3} \times \PP^{1}}(1, 1).
\]
By taking the push-forward $p_{*}$ and $\otimes \cO_{S \times \PP^{1}}(-1)$, we obtain
\[
	\cO_{S \times \PP^{1}}(-1)^{2}
	\stackrel{p_{*}(q^{*}M \circ \iota)^{*}\otimes \cO_{\PP^{1}}(-1)}
	{\longrightarrow} \cO_{S \times \PP^{1}}^{4}.
\]
Finally, take its dual again:
\begin{equation}\label{eqn:bundletomap}
	\cO_{S \times \PP^{1}}^{4}
	\stackrel{(p_{*}(q^{*}M \circ \iota)^{*}
	\otimes \cO_{\PP^{1}}(-1))^{*}}{\longrightarrow}
	\cO_{S \times \PP^{1}}^{2}(1).
\end{equation}
For a general fiber, it is an epimorphism to a rank 2, degree 2 bundle, so we have a degree 2 map $\PP^{1} \to \mathrm{Gr}(2, 4)$. Hence we have a rational family of stable maps
\[
	\Phi(M) : S \dashrightarrow \bK,
\]
which is regular on the locus that $(p_{*}(q^{*}M \circ \iota)^{*}\otimes \cO_{\PP^{1}}(-1))^{*}$ is surjective.

Therefore there is a rational map
\begin{equation}\label{eqn:KroneckertoKontsevich}
	\Phi : \PP\Hom(\CC^{2} \otimes V, \CC^{2})^{ss}
	\dashrightarrow \bK.
\end{equation}
It is straightforward to see that $\Phi$ is $G$-invariant, because the $G$-action simply changes the coordinates of the fiber $\PP^{1}$ and the automorphism of $\cO_{\PP^{1}}(-1)^{2} \hookrightarrow \cO_{\PP^{1}}^{4}$. Therefore we have the quotient map
\[
	\overline{\Phi} : \bR \dashrightarrow \bK.
\]

\subsection{GIT stability of the moduli space of Kronecker modules}

The GIT (semi)stability of the moduli space of Kronecker modules is already well-known. For the proof, consult \cite[Proposition 15]{JMd87}. Also see \cite[Remark 4.9]{ACK07}.

\begin{theo}
A closed point $M \in \PP\Hom(A \otimes V, B)$ is (semi)stable with respect to the $G$-action if and only if for every nonzero proper subspace $A' \subset A$ and $B' \subset B$ such that $M(A' \otimes V) \subset B'$,
\[
	\frac{\dim B}{\dim A} \; (\le) < \frac{\dim B'}{\dim A'}.
\]
\end{theo}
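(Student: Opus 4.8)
The plan is to prove this via the Hilbert--Mumford numerical criterion for the $G=(\mathrm{GL}(A)\times\mathrm{GL}(B))$-action on $\PP\Hom(A\otimes V,B)$, where $g=(g_A,g_B)$ acts by $M\mapsto g_B M g_A^{-1}$, linearized by a character of the form $(\det_A)^{p}(\det_B)^{q}$ of $G$ with the ratio $p:q$ chosen proportional to $-\dim B:\dim A$; this is exactly what makes the slope $\dim B/\dim A$ appear in the criterion. Recall that $[M]$ is semistable (resp.\ stable) if and only if the Mumford weight $\mu([M],\lambda)\geq 0$ (resp.\ $>0$) for every nontrivial $1$-parameter subgroup $\lambda$ of $G$.

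First I would reduce a general $\lambda$ to a combinatorial datum. Diagonalizing $\lambda$ produces weight decompositions $A=\bigoplus_i A_{r_i}$ and $B=\bigoplus_j B_{s_j}$; since $V$ carries the trivial $G$-action, the coordinate of $M$ sending the $r_i$-summand into the $s_j$-summand has $\lambda$-weight $s_j-r_i$, so up to the constant shift coming from the chosen character, $\mu([M],\lambda)$ is the maximum of $r_i-s_j$ over the indices where that coordinate is nonzero. A standard convexity argument---$\mu(\cdot,\lambda)$ is piecewise linear in the weight vector and its sign is governed by the breakpoints---shows that it suffices to test $\lambda$ having at most two distinct weights on each of $A$ and $B$. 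Such a $\lambda$ is precisely the datum of a pair of subspaces $A'\subset A$, $B'\subset B$, namely the spans of the heavier weight spaces.

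Then I would compute $\mu$ for such a two-step $\lambda$ and read off the inequality. The key point, and the place where the hypothesis $M(A'\otimes V)\subseteq B'$ enters, is that any coordinate of $M$ sending $A'\otimes V$ into the complement of $B'$ contributes the largest value of $r_i-s_j$: if some such coordinate is nonzero---that is, if $(A',B')$ fails to be a subrepresentation---then $\mu([M],\lambda)>0$ automatically, and this $\lambda$ imposes no condition. Hence only compatible pairs with $M(A'\otimes V)\subseteq B'$ can be destabilizing, and for these a direct computation with the balanced traceless weights (weight $\dim A-\dim A'$ on $A'$ and $-\dim A'$ on the complement, and similarly for $B$) shows that $\mu([M],\lambda)\geq 0$ is equivalent to $\dim A\cdot\dim B'-\dim B\cdot\dim A'\geq 0$, i.e.\ to $\dim B/\dim A\leq\dim B'/\dim A'$, with strictness matching stability. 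Carrying out both implications---a destabilizing $\lambda$ yields a violating compatible pair, and conversely a violating pair produces a destabilizing $\lambda$---gives the stated equivalence; this is King's stability criterion for representations of the Kronecker quiver with $\dim V$ arrows.

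The main obstacle I anticipate is the reduction to two-weight $1$-parameter subgroups together with the sign and normalization bookkeeping: one must verify that the worst-case $\lambda$ is genuinely two-step, so that the general numerical criterion is captured by subspace pairs, and that the chosen linearization produces precisely the slope $\dim B/\dim A$ rather than some other rational weight. Both steps are routine but error-prone. Alternatively, one may simply invoke King's theorem together with the dictionary between Kronecker modules $M$ and subrepresentations $(A',B')$ satisfying $M(A'\otimes V)\subseteq B'$, which is the route taken in \cite{JMd87} and \cite{ACK07}.
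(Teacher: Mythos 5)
The paper does not actually prove this theorem: it is quoted as well known, with the proof deferred to \cite[Proposition 15]{JMd87} and \cite[Remark 4.9]{ACK07}. Your proposal reconstructs precisely the argument that underlies those references --- the Hilbert--Mumford criterion for $\mathrm{GL}(A)\times\mathrm{GL}(B)$ with the character $(\det_A)^{-\dim B}(\det_B)^{\dim A}$ (the unique ratio killing the scalars that act trivially), reduction to two-step one-parameter subgroups, and the observation that a $\lambda$ whose associated pair $(A',B')$ is not a subrepresentation imposes no condition because the block $A'\otimes V\to B/B'$ carries the extreme weight. So you are supplying a proof where the paper supplies a citation; the substance of your sketch is correct and is King's/Dr\'ezet's argument.

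Two points deserve care if you write this out. First, the reduction to two-step $\lambda$ is cleanest in the affine-with-character formulation: there $\mu([M],\lambda)$ is finite exactly when $M$ preserves the filtration induced by $\lambda$, and in that case it equals a \emph{positive combination} of the character evaluated on the successive sub-pairs $(A_{\geq n},B_{\geq n})$, which immediately reduces the test to single subspace pairs. In the projective formulation with $\max_{i,j}(\alpha_i-\beta_j)$ the convexity goes in a direction that does not by itself give the reduction, so "piecewise linear, governed by breakpoints" is hiding the actual work. Second, and more importantly, your bookkeeping must allow the degenerate pairs $B'=0$ and $A'=A$ (these arise from one-parameter subgroups acting with a single weight on one factor). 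With the literal reading of the statement --- both $A'$ and $B'$ nonzero and proper --- the criterion is vacuous when $\dim A=\dim B=2$ (the only admissible slopes are $1\leq 1$), and it would fail to detect the unstable matrices with a zero row or column appearing in the paper's subsequent Lemma and Corollary. The version your Hilbert--Mumford computation actually produces is $\dim A\cdot\dim B'\geq\dim B\cdot\dim A'$ for \emph{all} pairs $A'\subseteq A$, $B'\subseteq B$ with $M(A'\otimes V)\subseteq B'$, excluding only $(0,0)$ and $(A,B)$ for the strict inequality; that is the form you should prove, and it is the form consistent with the rest of Section 6.
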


In our special case that $\dim A = \dim B = 2$, we have:

\begin{coro}
Let $\bX = \PP\Hom(\CC^{2} \otimes V, \CC^{2}) \cong \PP(V^{*} \otimes \gl_{2})$ with the prescribed linearized $G$-action. $M \in \bX$ is (semi)stable if and only if for every one-dimensional subspace $A' \subset \CC^{2}$, $\dim \mathrm{im} \;M(A' \otimes V) (\ge) > 1$.
\end{coro}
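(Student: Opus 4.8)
The plan is to deduce this statement directly from the preceding general stability criterion by specializing to $A = B = \CC^{2}$. Under this specialization one has $\dim B / \dim A = 1$, and the only nonzero proper subspaces of $\CC^{2}$ are the one-dimensional ones; hence every nontrivial subrepresentation to be tested has a one-dimensional $A' \subset \CC^{2}$, so that $\dim A' = 1$ throughout. The whole argument is then a bookkeeping of the comparison $\dim B'/\dim A'$ versus $1$.

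First I would fix a one-dimensional $A' \subset \CC^{2}$ and examine the subspaces $B' \subset \CC^{2}$ admissible for this $A'$, namely those with $M(A' \otimes V) \subset B'$, i.e. $B' \supseteq \mathrm{im}\, M(A' \otimes V)$. Since the slope $\dim B'/\dim A'$ only decreases as $B'$ shrinks, the sharpest test for a given $A'$ comes from the minimal admissible choice $B' = \mathrm{im}\, M(A' \otimes V)$ (understood as the minimal subrepresentation with this $A$-component, whose $B$-component is allowed to be zero when the image vanishes). Its dimension $\dim \mathrm{im}\, M(A' \otimes V)$ lies in $\{0,1,2\}$. Substituting into the criterion of the theorem, the semistability (resp. stability) of $M$ becomes exactly the requirement that the minimal slope $\dim \mathrm{im}\, M(A' \otimes V)/\dim A' = \dim \mathrm{im}\, M(A' \otimes V)$ satisfy $\ge 1$ (resp. $> 1$) for every one-dimensional $A'$, which is the asserted condition.

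The only point requiring care is confirming that minimizing $\dim B'/\dim A'$ over all admissible $B'$ for a fixed $A'$ is achieved by $B' = \mathrm{im}\, M(A' \otimes V)$, and then tracking the three values $0,1,2$ of its dimension. The extremes are immediate: if $\dim \mathrm{im}\, M(A' \otimes V) = 2$ the resulting slope $2$ never violates either inequality, while if $\dim \mathrm{im}\, M(A' \otimes V) = 0$ the slope $0$ violates both, corresponding to an unstable point. Thus the entire content is concentrated at the borderline value $\dim \mathrm{im}\, M(A' \otimes V) = 1$, which distinguishes strictly semistable points from stable ones. I expect no genuine obstacle beyond this elementary case analysis, the specialization being essentially formal once one reads off $\dim B/\dim A = 1$.
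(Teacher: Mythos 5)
Your reduction — fix a one-dimensional $A'$, observe that the slope $\dim B'/\dim A'$ is minimized at $B'=\mathrm{im}\,M(A'\otimes V)$, and read off the three cases $\dim=0,1,2$ — is certainly the intended route, and the paper offers no written proof to compare it with. However, there are two intertwined problems, one of which is a genuine gap. First, as you half-acknowledge when you allow the $B$-component to be zero, the theorem cannot be read literally: if both $A'$ and $B'$ are required to be nonzero and proper, then in the case $\dim A=\dim B=2$ every admissible pair has slope $1/1=1$, the semistability inequality $1\le 1$ is vacuous, and every $M$ would be semistable. So one must pass to the subrepresentation form of the criterion, in which $B'$ ranges over all subspaces containing $M(A'\otimes V)$ (including $0$ and $B$) and only the pair $(A',B')$ is required to be nonzero and proper. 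Your proof should say this explicitly rather than treat it as a parenthetical.

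Second, and more seriously: once you adopt that (correct) form of the criterion, the pairs with $A'=\CC^{2}$ and $B'$ a proper subspace containing $\mathrm{im}\,M(\CC^{2}\otimes V)$ become admissible, and your minimization over $B'$ for a \emph{fixed one-dimensional} $A'$ never tests them. This extra test is not redundant: take $M=\left[\begin{smallmatrix} g & h\\ 0 & 0\end{smallmatrix}\right]$ with $g,h$ linearly independent. Then $\dim\mathrm{im}\,M(A'\otimes V)=1$ for every one-dimensional $A'$, so the criterion you derive declares $M$ semistable; but $\mathrm{im}\,M(\CC^{2}\otimes V)$ is one-dimensional, the pair $(\CC^{2},\mathrm{im}\,M)$ has slope $1/2<1$, and $M$ is in fact unstable (one sees directly that $0$ lies in the closure of its $\SL_{2}\times\SL_{2}$-orbit in the affine cone). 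The paper's own subsequent lemma lists exactly this matrix as an unstable type, so the full criterion must include the condition $\dim\mathrm{im}\,M(\CC^{2}\otimes V)=2$ (equivalently, the dual test on quotients of $B$), which the statement restricted to one-dimensional $A'$ — and hence your proof of it — omits. The fix is to run your minimal-$B'$ argument over \emph{all} nonzero $A'\subseteq\CC^{2}$, requiring $\dim\mathrm{im}\,M(A'\otimes V)\,(\ge)>\,\dim A'$, rather than only over the one-dimensional ones.
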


This stability condition can be described in down-to-earth terms. We can describe $M \in \bX$ as a $2 \times 2$ matrix of linear polynomials with four variables $x, y, z, w$. If $M$ is semistable, then even after performing row/column operations, there is no zero row or column. If $M$ is stable, then even after any row/column operations, $M$ has no zero entry. Therefore we have the following results.

\begin{lemm}
Let $M \in \bX$ be a closed point.
\begin{enumerate}
\item If $M$ is unstable, then
\[
	M \in G \cdot \left[\begin{array}{cc}g & h\\0&0\end{array}\right]
	\mbox{ or }
	M \in G \cdot \left[\begin{array}{cc}g &0\\h&0\end{array}\right]
\]
for some $g, h \in V^{*}$.
\item If $M \in \bX^{ss} - \bX^{s}$, then
\[
	M \in G \cdot \left[\begin{array}{cc}g&k\\0&h\end{array}\right]
\]
for some $g, h \in V^{*}\setminus \{0\}$ and $k \in V^{*}$.
\item If $M \in \bX^{ss} - \bX^{s}$ and $M$ has a closed orbit in $\bX^{ss}$, then $k = 0$ and
\[
	M \in G \cdot \left[\begin{array}{cc}g & 0 \\ 0 & h\end{array}\right]
\]
for some $g, h \in V^{*}\setminus \{0\}$. If $g = h$, then $\Stab M \cong \PGL_{2}$. If not, $\Stab M \cong \CC^{*}$.
\end{enumerate}
\end{lemm}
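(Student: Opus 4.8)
The plan is to read all three statements off the numerical criterion of the preceding Corollary, together with the ``down-to-earth'' reformulation via row and column operations, and to treat the closed-orbit case in Item (3) by an explicit one-parameter degeneration. Throughout, a nonzero proper subrepresentation is a pair $(A',B')$ with $M(A'\otimes V)\subseteq B'$, and by the stated Theorem it violates semistability exactly when $\dim B'/\dim A' < \dim B/\dim A = 1$. For nonzero $M$ in the balanced case $\dim A=\dim B=2$, the only such slope-violating types are $(\dim A',\dim B') = (1,0)$ and $(2,1)$ (the type $(2,0)$ would force $M=0$). For Item (1), the type $(1,0)$ says some line $A'\subset A$ has $M(A'\otimes V)=0$; adapting a basis of $A$ to $A'$ makes one column vanish, so after a column transposition in $G$ we get $M\in G\cdot\left(\begin{smallmatrix} g & 0\\ h & 0\end{smallmatrix}\right)$. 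The type $(2,1)$ says the whole image $M(A\otimes V)$ lies in a line $B'\subset B$; adapting a basis of $B$ to $B'$ makes one row vanish, giving $M\in G\cdot\left(\begin{smallmatrix} g & h\\ 0 & 0\end{smallmatrix}\right)$. This is exactly Item (1).

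For Item (2), suppose $M$ is semistable but not stable. Then stability fails along a subrepresentation of the same slope, i.e. a pair $(A',B')$ with $\dim A'=\dim B'=1$ and $M(A'\otimes V)\subseteq B'$. Adapting bases of $A$ and $B$ to $A'$ and $B'$ places $M$ in upper-triangular form $\left(\begin{smallmatrix} g & k\\ 0 & h\end{smallmatrix}\right)$. Semistability forbids any vanishing row or column, since such a vanishing would be a $(1,0)$ or $(2,1)$ destabilizer; as the first column is $(g,0)^{t}$ and the second row is $(0,h)$, this forces $g,h\in V^{*}\setminus\{0\}$, proving Item (2).

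For Item (3), I start from the normal form $\left(\begin{smallmatrix} g & k\\ 0 & h\end{smallmatrix}\right)$ of Item (2) and degenerate the off-diagonal entry using the one-parameter subgroup $\lambda(t)=(\diag(t,t^{-1}),\diag(t,t^{-1}))\in G$. A direct computation gives $\lambda(t)\cdot\left(\begin{smallmatrix} g & k\\ 0 & h\end{smallmatrix}\right)=\left(\begin{smallmatrix} g & t^{2}k\\ 0 & h\end{smallmatrix}\right)$, whose limit as $t\to 0$ is $\diag(g,h)$. Hence $\diag(g,h)\in\overline{G\cdot M}$, and since $G\cdot M$ is closed in $\bX^{ss}$ it contains $\diag(g,h)$; thus $M\in G\cdot\left(\begin{smallmatrix} g & 0\\ 0 & h\end{smallmatrix}\right)$, i.e. $k=0$. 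It remains to compute the stabilizer of $M_{0}=\left(\begin{smallmatrix} g & 0\\ 0 & h\end{smallmatrix}\right)$. Working projectively, I write the stabilizer condition as $A M_{0}=\mu\, M_{0}B$ for $(A,B)\in\GL_{2}\times\GL_{2}$ and a scalar $\mu$; the off-diagonal equations read $a_{12}h=\mu g\,b_{12}$ and $a_{21}g=\mu h\,b_{21}$. When $g$ and $h$ are linearly independent in $V^{*}$ these force $a_{12}=a_{21}=b_{12}=b_{21}=0$ and impose a single common ratio on the diagonal entries, so $\Stab M_{0}\cong\CC^{*}$; when $g$ and $h$ are proportional (the case $g=h$ after normalization), $M_{0}$ is a scalar multiple of the identity, the relation collapses to $A=\mu B$, and $\Stab M_{0}\cong\PGL_{2}$ (the diagonal copy). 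This gives Item (3).

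The main obstacle will be Item (3): one must work in $\PGL_{2}\times\PGL_{2}$ rather than $\GL_{2}\times\GL_{2}$, absorbing the projective scalar $\mu$ correctly, so that the dichotomy $\Stab M_{0}\cong\PGL_{2}$ versus $\CC^{*}$ emerges exactly, and one must invoke the closedness of $G\cdot M$ to promote the one-parameter limit from the orbit closure into the orbit itself. The normal-form manipulations in Items (1) and (2) are then routine once the destabilizing subrepresentation types are identified.
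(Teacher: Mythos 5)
Your proof is correct and follows the same route the paper implicitly takes: the paper states this lemma without proof as an immediate consequence of the Drézet stability criterion and its down-to-earth row/column reformulation, and your normal-form arguments for (1) and (2), the one-parameter-subgroup degeneration $\mathrm{diag}(t,t^{-1})$ killing the off-diagonal entry, and the stabilizer computation in $\PGL_2\times\PGL_2$ are exactly the standard details being suppressed. Your handling of the proportional-but-unequal case $h=cg$ (reducing to $g=h$ by absorbing $\mathrm{diag}(1,c)$ into $B$) is in fact slightly more careful than the lemma's own phrasing.
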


In particular, we obtain:

\begin{lemm}\label{lem:regularonstablepart}
The rational map $\Phi$ in \eqref{eqn:KroneckertoKontsevich} is regular on $\bX^{s}$.
\end{lemm}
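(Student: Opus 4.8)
The plan is to reduce the regularity of $\Phi$ at a stable point to the fiberwise surjectivity of the bundle map built in \eqref{eqn:bundletomap}, and then to match that surjectivity condition exactly with the GIT stability criterion of the preceding corollary. By construction $\Phi(M)$ is regular at $M$ precisely when the map
\[
	\left(p_{*}(q^{*}M \circ \iota)^{*}\otimes \cO_{\PP^{1}}(-1)\right)^{*} :
	\cO_{\PP^{1}}^{4} \longrightarrow \cO_{\PP^{1}}(1)^{2}
\]
(the case $S = \mathrm{pt}$ of \eqref{eqn:bundletomap}) is an epimorphism of vector bundles. So the first step is to observe that if this map is surjective everywhere, then its kernel is a rank-$2$ subbundle of $V \otimes \cO_{\PP^{1}}$, hence defines a morphism $\PP^{1} \to Gr(2, V) = Gr(2,4)$; the quotient $\cO_{\PP^{1}}(1)^{2}$ has degree $2$, so the pullback of the Plücker class has degree $2$ and we obtain an honest stable map of degree $2$, an element of $\bK$. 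Conversely, if the map drops rank at some $t \in \PP^{1}$, the kernel jumps to dimension $\ge 3$ there and the morphism to the Grassmannian is undefined. Thus it suffices to prove that for $M \in \bX^{s}$ this map is fiberwise surjective on all of $\PP^{1}$.

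Next I would unwind the construction to identify the fiber of this map over a point $[\lambda] \in \PP^{1} = \PP(\CC^{2})$ of the source. Fixing $\lambda$ in the first factor $\CC^{2}$ gives the evaluation map $M(\lambda) : V \to \CC^{2}$, $v \mapsto M(\lambda \otimes v)$. Tracking the tautological inclusion $\iota$, the two dualizations, and the pushforward $p_{*}$ through the $\cO_{\PP^{1}}(\pm 1)$ twists, I expect to show that the fiber of \eqref{eqn:bundletomap} at $[\lambda]$ is canonically the map $M(\lambda) : V \to \CC^{2}$, with the $\cO_{\PP^{1}}(1)$-twist on the target recording its linear dependence on $\lambda$. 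Consequently the bundle map is surjective at $[\lambda]$ if and only if $\dim \mathrm{im}\, M(\lambda) = 2$.

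Finally I would invoke the corollary characterizing stability: $M \in \bX^{s}$ if and only if for every one-dimensional subspace $A' = \CC\lambda \subset \CC^{2}$ one has $\dim \mathrm{im}\, M(A' \otimes V) > 1$, i.e.\ equal to $2$. This is precisely the condition that $M(\lambda) : V \to \CC^{2}$ be surjective for every $\lambda \ne 0$. Combining this with the fiber identification of the previous step, for $M \in \bX^{s}$ the map \eqref{eqn:bundletomap} is surjective at every point of $\PP^{1}$; hence $\Phi(M)$ is a well-defined degree-$2$ stable map and $\Phi$ is regular on $\bX^{s}$.

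The main obstacle is the bookkeeping in the middle step: correctly tracing the fiber of the iterated dualize–pushforward–twist construction so as to recover exactly $M(\lambda)$ with the correct $\cO_{\PP^{1}}(1)$-twist, rather than its transpose or a spurious degree shift. Once that identification is secured, the equivalence between fiberwise surjectivity and GIT stability is immediate from the stated corollary, and the lemma follows.
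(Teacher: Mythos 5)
Your proposal is correct and follows essentially the same route as the paper: identify the fiber of the bundle map \eqref{eqn:bundletomap} over $[\lambda]\in\PP^{1}$ with the linear combination $M(\lambda):V\to\CC^{2}$ of the rows of $M$, and observe that the GIT stability criterion says exactly that this has rank $2$ for every $\lambda\neq 0$, so the map is a surjection of bundles and defines a degree $2$ stable map. The paper's proof is just a terser version of the same argument, likewise asserting the fiber identification without carrying out the dualize--pushforward bookkeeping you flag.
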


\begin{proof}
By the above construction, for each $M \in \bX^{s}$, $\Phi(M)$ is given by a rank 2 quotient bundle
\[
	\Phi(M) : \cO_{\PP^{1}}^{4} \to \cO_{\PP^{1}}(1)^{2}
\]
such that for each $p \in \PP^{1}$, the map is given by a linear combination of coefficients of two rows of $M$. Since $M \in \bX^{s}$, every linear combination has full rank. Therefore $\Phi(M)$ is a surjection and it defines a stable map.
\end{proof}

\subsection{A stratification on $\bX^{ss}$}

Now we define a stratification
\[
	\bX^{ss} = \bY_{0} \sqcup \bZ_{0} \sqcup \bY_{1} \sqcup \bZ_{1}
	\sqcup \bX^{s}
\]
as the following. The last open stratum $\bX^{s}$ is the stable locus.

Let $\bY_{0}'$ be the image of
\[
	\rho_{0} : \PP(\mathfrak{gl}_{2}) \times \PP(V^{*}) \to \bX, \quad
	(A, g) \mapsto A \left[\begin{array}{cc}
	g & 0\\0 & g\end{array}\right]
\]
and let $\bY_{0} := \bY_{0}' \cap \bX^{ss}$. Then $\bY_{0} = \rho_{0}(\PGL_{2} \times \PP(V^{*}))$ and on this locus $\rho_{0}$ is an embedding. So $\bY_{0}$ is a smooth closed subvariety of dimension 6. Finally, at each closed point $M \in \bY_{0}$, the normal bundle $N_{\bY_{0}/\bX^{ss}}|_{M}$ is naturally isomorphic to $H \otimes \mathfrak{sl}_{2}$, where $H$ is a 3-dimensional quotient space of $V^{*}$.

Let $\bZ'_{0}$ be the image of
\[
	\tau_{0} : G \times \PP(V^{*}\oplus V^{*}) \times \PP^{2} \to \bX,
	\quad ((A, B), (g, h), [s:t:u]) \mapsto
	A\left[\begin{array}{cc}sg & uh\\ 0 & tg\end{array}\right]B^{-1}.
\]
Let $\bZ_{0} := (\bZ'_{0} \cap \bX^{ss}) \setminus \bY_{0}$ and let $\overline{\bZ}_{0} := \bZ_{0} \sqcup \bY_{0}$, the closure of $\bZ_{0}$ in $\bX^{ss}$. By computing the dimension of a general fiber, one can see that $\bZ_{0}$ is a 10-dimensional irreducible $G$-invariant variety. The normal cone $C_{\bY_{0}/\overline{\bZ}_{0}}$ is an analytic locally trivial bundle, whose fiber at $M \in \bY_{0}$ is isomorphic to $\Stab M \cdot (H \otimes \langle e\rangle) = \Stab M \cdot (H \otimes \langle f\rangle)$ where $\{h, e, f\}$ is the standard basis of $\mathfrak{sl}_{2}$. In the projectivized normal space $\PP(H \otimes \mathfrak{sl}_{2})$, $\PP(C_{\bY_{0}/\overline{\bZ}_{0}}|_{M}) \cong \PP H \times \PGL_{2}\cdot \PP\langle e\rangle \cong \PP H \times \PP^{1}$. Thus $C_{\bY_{0}/\overline{\bZ}_{0}}$ is a degree 2 bundle of dimension 4.

Let $\bY_{1}'$ be the image of
\[
	\rho_{1} : G \times \PP (V^{*}\oplus V^{*}) \to \bX, \quad
	((A, B), (g, h)) \mapsto A\left[\begin{array}{cc}g&0\\0&h
	\end{array}\right]B^{-1}.
\]
Similarly, let $\bY_{1} := (\bY'_{1} \cap \bX^{ss}) \setminus \bY_{0}$ and let $\overline{\bY}_{1} := \bY_{0} \sqcup \bY_{1}$, the closure of $\bY_{1}$ in $\bX^{ss}$. A general fiber of $\rho_{1}$ has dimension 2, so $\bY_{1}$ has dimension 11. By computing its general fiber, we can conclude that $\rho_{1}|_{\rho_{1}^{-1}(\bY_{1})}$ is smooth and thus $\bY_{1}$ is smooth, too. $\overline{\bY}_{1}$ is singular precisely along $\bY_{0}$. For $M \in \bY_{1}$, the fiber of the normal bundle $N_{\bY_{1}/\bX^{ss}}|_{M}$ is naturally isomorphic to $K \otimes \langle e, f\rangle$, where $K$ is a 2-dimensional quotient space of $V^{*}$. Finally, the normal cone $C_{\bY_{0}/\overline{\bY}_{1}}$ is a cone over a smooth 4-dimensional variety in $N_{\bY_{0}/\bX^{ss}}$. Indeed, at $M \in \bY_{0}$, $C_{\bY_{0}/\bX^{ss}}|_{M} \cong \Stab M \cdot (H \otimes \langle h \rangle) \subset H \otimes \mathfrak{sl}_{2} \cong N_{\bY_{0}/\bX^{ss}}|_{M}$ and $\PP(C_{\bY_{0}/\bX^{ss}}|_{M}) \cong \PP H \times \PGL_{2} \cdot \PP\langle h \rangle \cong \PP H \times \PP^{2} \subset \PP(H \otimes \mathfrak{sl}_{2})$.

Finally, let $\bZ_{1}'$ be the image of
\[
	\tau_{1} : G \times \PP((V^{*})^{3}) \to \bX, \quad
	((A, B), (g, h, k)) \mapsto A\left[\begin{array}{cc}g&k\\0&h
	\end{array}\right]B^{-1}
\]
and let $\bZ_{1} := (\bZ_{1}' \cap \bX^{ss}) \setminus (\bY_{0} \sqcup \bZ_{0} \sqcup \bY_{1})$. And let $\overline{\bZ}_{1} = \bZ_{1} \sqcup \bY_{0} \sqcup \bZ_{0} \sqcup \bY_{1}$, the closure of $\bZ_{1}$ in $\bX^{ss}$. By checking the dimension of a general fiber, we can see that $\bZ_{1}$ is a codimension two irreducible $G$-invariant subvariety of $\bX^{ss}$. The normal cone $C_{\bY_{1}/\overline{\bZ}_{1}}$ is a fiber bundle, analytic locally the union of two transversal rank 2 subbundles of the normal bundle $N_{\bY_{1}/\bX^{ss}}$. For $M \in \bY_{1}$, $C_{\bY_{1}/\overline{\bZ}_{1}}|_{M} \cong K \otimes \langle e \rangle \cup K \otimes \langle f \rangle \subset K \otimes \langle e, f\rangle \cong N_{\bY_{1}/\bX^{ss}}|_{M}$.

\subsection{Partial desingularization - an outline}

In our situation, Kirwan's partial desingularization is obtained through the following. For the general statement and its proof, see \cite{Kir85}. Set $\bX^{0} := \bX^{ss}$. First of all, take the blow up $\pi_{1} : {\bX^{1}}' \to \bX^{0}$ along $\bY_{0}$, the deepest stratum with the largest stabilizer. Then ${\bX^{1}}'$ is a smooth variety with a $G$-action since $\bY_{0}$ is a smooth $G$-invariant subvariety. Let $\bY_{0}^{1}$ be the exceptional divisor and let $\overline{\bY}_{1}^{1}, \overline{\bZ}_{i}^{1}$ be the proper transforms of $\overline{\bY}_{1}$, $\overline{\bZ}_{i}$ respectively. Note that $\overline{\bY}_{1}^{1}$ is a smooth subvariety of ${\bX^{1}}'$, since the normal cone $C_{\bY_{0}/\overline{\bY}_{1}}$ is a cone over a smooth variety.

For a linearized $\QQ$-ample line bundle $L_{0}$ on $\bX^{0}$ (this is unique up to scaling, since $\mathrm{rank}\;\mathrm{Pic}(\bX) = 1$.), let $L_{1} := \pi_{1}^{*}L_{0}\otimes \cO(-\epsilon_{1}\bY_{0}^{1})$ for sufficiently small $\epsilon_{1} > 0$. Then $L_{1}$ is an ample $\QQ$-line bundle and induces a linearized $G$-action. With respect to this $G$-linearized action on ${\bX^{1}}'$, $\overline{\bZ}_{0}^{1}$ becomes unstable. Let $\bX^{1} := {\bX^{1}}' \setminus \overline{\bZ}_{0}^{1}$.

Let $\pi_{2} : {\bX^{2}}' \to \bX^{1}$ be the blow up along $\overline{\bY}_{1}^{1}$. Then ${\bX^{2}}'$ is a smooth variety with a $G$-action since $\overline{\bY}_{1}^{1}$ is a $G$-invariant smooth subvariety. Let $\overline{\bY}_{1}^{2}$ be the exceptional divisor and let $\bY_{0}^{2}$ (resp. $\overline{\bZ}_{1}^{2}$) be the proper transform of $\bY_{0}^{1}$ (resp. $\overline{\bZ}_{1}^{1}$). Let $L_{2} := \pi_{2}^{*}L_{1}\otimes \cO(-\epsilon_{2}\overline{\bY}_{1}^{2})$ for sufficiently small $0 < \epsilon_{2} \ll \epsilon_{1}$. Since $\overline{\bY}_{1}^{1}$ is $G$-invariant, there is a well-defined $G$-linearized action on $L_{2}$. With respect to this linearized action on ${\bX^{2}}'$, $\overline{\bZ}_{1}^{2}$ is unstable. Let $\bX^{2} := {\bX^{2}}' \setminus \overline{\bZ}_{1}^{2}$.

Now $\bX^{2} = (\bX^{2})^{ss} = (\bX^{2})^{s}$ because there is no semistable point with a positive dimensional stabilizer. The partial desingularization of $\bX\git G$ is defined by $\bX^{2}/G$. If we denote $\pi_{1} \circ \pi_{2}$ by $\pi$, then since
\[
	\pi^{-1}(\bX^{s}) \subset (\bX^{2})^{s} \subset (\bX^{2})^{ss}
	\subset \pi^{-1}(\bX^{ss})
\]
in general, there are quotient maps $\overline{\pi}_{i} : \bX^{i}/G \to \bX^{i-1}/G$ for $i = 1, 2$. In summary, we have:
\[
	\xymatrix{\bX^{2} \ar[r]^{\pi_{2}}_{\overline{\bY}_{1}^{1}}
	\ar[d]^{/G} &
	\bX^{1} \ar[r]^{\pi_{1}}_{\bY_{0}} \ar[d]^{/G}
	& \bX^{0} \ar[d]^{/G}\\
	\bX^{2}/G \ar[r]^{\overline{\pi}_{2}} &
	\bX^{1}/G \ar[r]^{\overline{\pi}_{1}} & \bX\git G.}
\]
Note that every point $M \in \bX^{2}$ has only finite stabilizers with respect to the $G$-action, thus $\bX^{2}/G$ has orbifold singularities only.

\subsection{Analysis on fibers}

Here we will take a look at the change in a fiber of two exceptional divisors. For $M \in \bY_{0}$, $\pi_{1}^{-1}(M) \cong \PP(H \otimes \mathfrak{sl}_{2})$ where $H$ is a 3-dimensional quotient space of $V^{*}$. On $\pi_{1}^{-1}(M)$, $\Stab M \cong \PGL_{2}$-action is induced by the trivial action on $V^{*}$ and the standard $\SL_{2}$-adjoint action on $\mathfrak{sl}_{2}$. With respect to this $\PGL_{2}$-action, the unstable locus is isomorphic to $\PP H \times \PP^{1}$, which is precisely $\PP(\cC_{\bY_{0}/\overline{\bZ}_{0}}|_{M})$. Thus in $\bX^{1}$, the inverse image of $M$ is $\PP(V \oplus \mathfrak{sl}_{2})^{ss}$. If we denote the image of $M$ in $\bX\git G$ by $\overline{M}$, then
\[
	\overline{\pi}_{1}^{-1}(\overline{M}) \cong
	\PP(V \otimes \mathfrak{sl}_{2})\git \Stab M \cong
	\PP(V \otimes \mathfrak{sl}_{2})\git \PGL_{2}.
\]

The strictly semistable locus is isomorphic to $\PP H \times \PP (\mathfrak{sl}_{2}) \setminus (\PP H \times \PP^{1})$, which is the projectivized normal cone $\PP (C_{\bY_{0}/\overline{\bY}_{1}}|_{M})$. Therefore over the fiber of $\overline{M}$, the second blow up $\overline{\pi}_{2} : \bX^{2}/G \to \bX^{1}/G$ is precisely the partial desingularization of the fiber $\PP(H \otimes \mathfrak{sl}_{2})\git \PGL_{2}$. This resolution is well-studied in \cite[Theorem 4.1]{Kie07}. The blown-up fiber is isomorphic to $\bM_{0,0}(\PP^{2}, 2)$, the moduli space of degree 2 stable maps in $\PP^{2}$. This is isomorphic to the blow up of $\PP^{5} = \PP(H \otimes \mathfrak{sl}_{2})\git \PGL_{2}$ along the degree 2 Veronese embedding of $\PP^{2}$.

For $M \in \bY_{1}^{1}$, $\pi_{2}^{-1}(M) = \PP(K \otimes \langle e, f\rangle)$. With respect to the $\Stab M \cong \CC^{*}$ action, the unstable locus is $\PP(K \otimes \langle e\rangle) \sqcup \PP(K \otimes \langle f\rangle)$. So on $\bX^{2}$, $\pi_{2}^{-1}(M) = \PP(K \otimes \langle e, f\rangle)^{s}$. Also in $\bX^{2}/G$, $\overline{\pi}_{2}^{-1}(\overline{M}) = \PP (K\otimes \langle e, f\rangle)\git \CC^{*} \cong \PP^{1} \times \PP^{1}$.

\subsection{Elementary modification of maps}\label{ssec:elemmod}

The two steps on the partial desingularization can be understood as two steps of the modification of a family of maps. In this section, we prove Theorem \ref{thm:pardesing}.

\begin{proof}[Proof of Theorem \ref{thm:pardesing}]
Let $\bX^{0} := \bX^{ss} = \PP(\Hom(\CC^{2} \otimes V, \CC^{2}))^{ss}$ as in previous sections. By \eqref{eqn:bundletomap} (applied to $S = \bX^{0}$), we have a rational map
\[
	f_{0} : \bX^{0} \times \PP^{1}\dashrightarrow Gr(2, 4)
\]
and by Lemma \ref{lem:regularonstablepart}, $f_{0}$ is regular on $\bX^{s} \times \PP^{1}$.

By the Pl\"ucker embedding, $Gr(2, 4)$ is embedded into $\PP^{5}$ as a quadric hypersurface. By composing with this embedding, we may regard $f_{0}$ as a family of maps to $\PP^{5}$. Indeed, this map is given by a bundle morphism
\begin{equation}\label{eqn:bundlemorphism}
	6\cO_{\bX^{0} \times \PP^{1}}
	= \wedge^{2}4\cO_{\bX^{0} \times \PP^{1}}
	\stackrel{\wedge^{2}(p_{*}(q^{*}M \circ \iota)^{*}
	\otimes \cO_{\PP^{1}}(-1))^{*}}{\longrightarrow}
	\wedge^{2}2\cO_{\bX^{0} \times \PP^{1}}(1)
	= \cO_{\bX^{0}\times \PP^{1}}(2).
\end{equation}
Let $F_{0} := \wedge^{2}(p_{*}(q^{*}M \circ \iota)^{*}\otimes \cO_{\PP^{1}}(-1))^{*}$.

Consider the blow up map $\pi_{1} \times \mathrm{id} : \bX^{1} \times \PP^{1} \to \bX^{0} \times \PP^{1}$. Then by taking the pull-back, we have a morphism of sheaves
\[
	6\cO_{\bX^{1}\times \PP^{1}}
	\stackrel{(\pi_{1}\times \mathrm{id})^{*}F_{0}}{\longrightarrow}
	\cO_{\bX^{1}\times \PP^{1}}(2).
\]
This map factors through
\begin{equation}\label{eqn:firstmodification}
	6\cO_{\bX^{1} \times \PP^{1}} \to
	\cO_{\bX^{1} \times \PP^{1}}(2)(-\bY_{0}^{1}),
\end{equation}
since $F_{0}|_{\bY_{0} \times \PP^{1}}$ is a zero map and so $(\pi_{1} \times \mathrm{id})^{*}F_{0}|_{\bY_{0}^{1} \times \PP^{1}}$ is zero, too. Let $F_{1}$ be the map in \eqref{eqn:firstmodification}. Then on $\bY_{0}^{1} \times \PP^{1} \setminus \overline{\bY}_{1}^{1} \times \PP^{1}$, $F_{1}$ is regular. Thus we have an extension $f_{1}$ of the map $f_{0} \circ (\pi_{1} \times \mathrm{id})$ such that the diagram
\[
	\xymatrix{\bX^{1} \times \PP^{1} \ar@{-->}[rrd]^{f_{1}}
	\ar[d]^{\pi_{1} \times \mathrm{id}}\\
	\bX^{0} \times \PP^{1} \ar@{-->}[r]^{f_{0}}
	& Gr(2, 4) \ar[r] & \PP^{5}}
\]
commutes.

Let $\pi_{2} \times \mathrm{id} : \bX^{2} \times \PP^{1} \to \bX^{1} \times \PP^{1}$ be the second blow up map. Then the base locus $\bB$ of $(\pi_{2} \times \mathrm{id})^{*}f_{1}$ is a two-to-one \'etale cover of $\overline{\bY}_{1}^{2}$, since on each fiber $\{M\} \times \PP^{1}$ for $M \in \overline{\bY}_{1}^{2}$, the undefined locus of $f_{1}|_{\{M\} \times \PP^{1}}$ is the union of two distinct points. In particular, it is a codimension two smooth subvariety. Let $\sigma : \Gamma \to \bX^{2} \times \PP^{2}$ be the blow up along $\bB_{1}$ and let $\bB_{2}$ be the exceptional divisor. Then $s : \Gamma \to \bX^{2} \times \PP^{2} \to \bX^{2}$ is a flat family of (possibly nodal) rational curves. Furthermore, the pull-back morphism
\[
	6\cO_{\Gamma}
	\stackrel{\pi^{*}(\pi_{2}\times \mathrm{id})^{*}F_{1}}{\to}
	\sigma^{*}\cO_{\bX^{2} \times \PP^{1}}(2)(-\bY_{0}^{2})
\]
factors through
\begin{equation}\label{eqn:secondmodification}
	6\cO_{\Gamma} \to
	\sigma^{*}\cO_{\bX^{2} \times \PP^{1}}(2)(-\bY_{0}^{2}-\bB_{2}).
\end{equation}
Let $F_{2}$ be the map in \eqref{eqn:secondmodification}. Then it is an epimorphism and we obtain a regular map $f_{2} : \Gamma \to \PP^{5}$ as below:
\[
	\xymatrix{\Gamma \ar[d]^{\sigma} \ar[rrddd]^{f_{2}}\\
	\bX^{2} \times \PP^{1} \ar[d]^{\pi_{2} \times \mathrm{id}}\\
	\bX^{1} \times \PP^{1} \ar@{-->}[rrd]^{f_{1}}
	\ar[d]^{\pi_{1} \times \mathrm{id}}\\
	\bX^{0} \times \PP^{1} \ar@{-->}[r]^{f_{0}}
	& Gr(2, 4) \ar[r] & \PP^{5}}
\]

Thus $(s : \Gamma \to \bX^{2}, f_{2} : \Gamma \to \PP^{5})$ defines a flat family of maps of degree 2 over $\bX^{2}$. By applying the standard stabilization of maps using relative log MMP, we obtain a family of stable maps $(\overline{s} : \overline{\Gamma} \to \bX^{2}, \overline{f}_{2} : \overline{\Gamma} \to \PP^{5})$. Moreover, $\overline{f}_{2}$ factors through $Gr(2, 4)$, since on an open dense subset $\bX^{s}$, the image is in a closed subvariety $Gr(2, 4)$. Hence we obtain a morphism $\Phi^{2} : \bX^{2} \to \bK$.

Finally, it is straightforward to see that this map is $G$-invariant, since $G$ acts as a change of coordinates of the domain curve. Thus we have a quotient map
\[
	\overline{\Phi}^{2} : \bX^{2}/G \to \bK.
\]
This is a birational morphism between two normal projective varieties. Moreover, both $\bX^{2}/G$ and $\bK$ are $\QQ$-factorial since they have only finite quotient singularities. They have the same Picard numbers, so $\overline{\Phi}^{2}$ is an isomorphism.
\end{proof}

Although the singularity is very mild, the space $\bK$ is still singular. Let $\bD \subset \bK$ be the locus of stable maps which are two-to-one maps to their images. This is precisely the locus of stable maps with nontrivial stabilizer groups. Along this locus, $\bK$ has $\ZZ_{2}$-quotient singularities.

In \cite[Theorem 1.2]{CHK12}, it is shown that the blow up of $\bK$ along $\bD$ is a smooth projective variety $\mathbf{CC}_{2}$, the so-called \emph{space of complete conics}.

\begin{coro}
The fiber product $\widetilde{\bM}_{2} := \bM_{2} \times_{\bR}\mathbf{CC}_{2}$ is a desingularization of $\bM_{2}$.
\end{coro}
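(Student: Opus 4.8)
The plan is to realize $\widetilde{\bM}_{2}$ as smooth by covering $\bR$ with two opens over which one of the two projections is an isomorphism, and then to verify that the projection $g : \widetilde{\bM}_{2} \to \bM_{2}$ is a proper birational morphism which is an isomorphism over the smooth locus of $\bM_{2}$. The inputs are three facts established earlier: $\mathbf{CC}_{2}$ is smooth by \cite[Theorem 1.2]{CHK12}; by Theorem \ref{mainthm1} the morphism $\Psi : \bM_{2} \to \bR$ is the blow up at the two points $p := [\cO_{Q}(1,0)]$ and $q := [\cO_{Q}(0,1)]$, which are \emph{smooth} points of $\bR$ lying in the stable locus $\bR^{s}$ (indeed $i_{*}\cO_{Q}(1,0) \cong I_{L,Q}(1)$ for a ruling $L \subset Q$, which is stable by Proposition \ref{propertyofr}, and $T_{p,\bR} \cong \CC^{9}$ as computed in the proof of Theorem \ref{mainthm1}); and the composite $\mathbf{CC}_{2} \to \bK \stackrel{\overline{\pi}}{\to} \bR$ is an isomorphism over $\bR^{s}$, since Kirwan's partial desingularization $\overline{\pi}$ modifies only the locus over $\bR \setminus \bR^{s}$ and the further blow up $\mathbf{CC}_{2} \to \bK$ is centered along $\bD$, which lies over $\bR \setminus \bR^{s}$.

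First I would set $V_{1} := \bR \setminus \{p,q\}$ and $V_{2} := \bR^{s}$. These are open in $\bR$ and cover it, because $p, q \in \bR^{s}$. Over $V_{1}$ the map $\Psi$ is an isomorphism, so the base change gives $\widetilde{\bM}_{2}|_{V_{1}} \cong \mathbf{CC}_{2}|_{V_{1}}$, which is smooth. Over $V_{2}$ the map $\mathbf{CC}_{2} \to \bR$ is an isomorphism, so $\widetilde{\bM}_{2}|_{V_{2}} \cong \bM_{2}|_{V_{2}}$, which is the blow up of the smooth variety $V_{2} = \bR^{s}$ at the two smooth points $p, q$, hence smooth. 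As smoothness is local and these two opens pull back to an open cover of $\widetilde{\bM}_{2}$, the fiber product is smooth. Moreover, over the dense open $V_{1} \cap V_{2} = \bR^{s} \setminus \{p,q\}$ both projections are isomorphisms, so the two charts glue along a copy of $\bR^{s}\setminus\{p,q\}$; thus $\widetilde{\bM}_{2}$ is irreducible, and therefore a smooth variety.

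Finally I would analyze $g : \widetilde{\bM}_{2} \to \bM_{2}$. It is proper and birational, being the base change along $\Psi$ of the proper birational morphism $\mathbf{CC}_{2} \to \bR$. Since $\Psi$ blows up the two smooth points $p, q$, which are disjoint from $\mathrm{Sing}(\bR) = \bR \setminus \bR^{s}$, the identification $\bM_{2}|_{V_{1}} \cong V_{1}$ shows that $\mathrm{Sing}(\bM_{2})$ is exactly the preimage of $\mathrm{Sing}(\bR)$, so $\bM_{2}^{\mathrm{sm}} = \Psi^{-1}(\bR^{s})$. Over $\bR^{s}$ the morphism $\mathbf{CC}_{2} \to \bR$ is an isomorphism, hence so is its base change $g$ over $\bM_{2}^{\mathrm{sm}}$; therefore $g$ is a resolution of singularities of $\bM_{2}$. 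The main obstacle is precisely the claim that $\mathbf{CC}_{2} \to \bR$ is an isomorphism over $\bR^{s}$: this rests on the disjointness of all the blow up centers involved, namely that $p,q$ are stable points of $\bR$ while the Kirwan centers of $\overline{\pi}$ and the center $\bD$ of $\mathbf{CC}_{2}\to\bK$ both lie over the strictly semistable locus $\bR\setminus\bR^{s}$. Once this disjointness is in hand, everything else is the formal open-cover argument above.
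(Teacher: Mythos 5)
Your proposal is correct and is essentially the paper's argument written out in full: the paper's proof is the single observation that the two blow up centers on $\bR$ (the stable points $[\cO_Q(1,0)],[\cO_Q(0,1)]$ for $\Psi$, versus the loci over $\bR\setminus\bR^s$ for $\mathbf{CC}_2\to\bK\to\bR$) are disjoint, so that $\widetilde{\bM}_2$ is the blow up of the smooth variety $\mathbf{CC}_2$ at two smooth points. Your two-chart cover of $\bR$ and the verification that $\widetilde{\bM}_2\to\bM_2$ is proper and birational make the same disjointness the crux, exactly as the paper does.
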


\begin{proof}
The blow up centers for $\bM_{2} \to \bR$ and $\mathbf{CC}_{2} \to \bK \to \bR$ are disjoint. Therefore $\widetilde{\bM}_{2}$ is a blow up of $\mathbf{CC}_{2}$ at two smooth points.
\[
	\xymatrix{\widetilde{\bM}_{2} \ar[dd] \ar[r] &
	\mathbf{CC}_{2}\ar[d]\\
	& \bK \ar[d] \\ \bM_{2} \ar[r] & \bR.}
\]
\end{proof}

It would be very interesting to find a moduli-theoretic meaning of $\widetilde{\bM}_{2}$.

\begin{rema}
The log MMP of $\bK$ was studied in \cite{CC10}. The authors computed the stable base locus decomposition of the effective cone of $\bK$, and computed some of the modular birational models of $\bK$. In \cite{CC10}, some birational models were simply described as contractions of some divisors and their moduli-theoretic interpretations were not described. Two birational models $\bR$ and $\bX^{1}/G$, the first step of the partial desingularization, are indeed their ``modular'' interpretations. Indeed, $\bR$ is the image of $\phi_{H_{\sigma_{2}}}$ in \cite[Proposition 3.7]{CC10}. For $D \in c(H_{\sigma_{2}}T)$, the image of $\phi_{D}$ is $\bX^{1}/G$ (see \cite[Theorem 3.8]{CC10} for the notation).
\end{rema}

%%%%%%%%%%%%%%%%%%%%%%%%%%%%%%%%%%%%%

\bibliographystyle{amsplain}

\end{document}